\providecommand{\U}[1]{\protect\rule{.1in}{.1in}}
\newtheorem{theorem}{Theorem} [section]
\newtheorem{corollary}{Corollary}[section]
\newtheorem{lemma}{Lemma}[section]
\newtheorem{proposition}{Proposition} [section]
\newtheorem{remark}{Remark} [section]
\newenvironment{proof}[1][Proof]{\textbf{#1.} }{\ \rule{1em}{1em}}
\numberwithin{equation}{section}
\begin{document}

\title{Metastability of Kolmogorov flows and inviscid damping of shear flows}
\author{Zhiwu Lin\\School of Mathematics\\Georgia Institute of Technology\\Atlanta, GA 30332, USA
\and Ming Xu\\Department of Mathematics\\JiNan University\\Guangzhou, 510632, China}
\date{}
\maketitle

\begin{abstract}
First, we consider Kolmogorov flow (a shear flow with a sinusoidal velocity
profile) for 2D Navier-Stokes equation on a torus. Such flows, also called bar
states, have been numerically observed as one type of metastable states in the
study of 2D turbulence. For both rectangular and square tori, we prove that
the non-shear part of perturbations near Kolmogorov flow decays in a time
scale much shorter than the viscous time scale. The results are obtained for
both the linearized NS equations with any initial vorticity in $L^{2}$, and
the nonlinear NS equation with initial L%
${{}^2}$
norm of vorticity of the size of viscosity. In the proof, we use the
Hamiltonian structure of the linearized Euler equation and RAGE theorem to
control the low frequency part of the perturbation. Second, we consider two
classes of shear flows for which a sharp stability criterion is known. We show
the inviscid damping in a time average sense for non-shear perturbations with
initial vorticity in $L^{2}$. For the unstable case, the inviscid damping is
proved on the center space. Our proof again uses the Hamiltonian structure of
the linearized Euler equation and an instability index theory recently
developed by Lin and Zeng for Hamiltonian PDEs.

\end{abstract}

\section{Introduction}

Consider 2D Navier-Stokes (NS) equation
\begin{equation}
\partial_{t}U+U\cdot\bigtriangledown U-\nu\bigtriangleup U=-\bigtriangledown
P,\text{ } \label{eqn-NS}%
\end{equation}
on a torus
\[
\mathbb{T}_{\alpha}=\left\{  0<y<2\pi,0<x<\frac{2\pi}{\alpha}\right\}
,\ \alpha>0,
\]
with the incompressible condition $\nabla\cdot U=0$, where $U=\left(
u,v\right)  $ is the fluid velocity and $\nu>0$ is the viscosity. More
precisely, we impose the periodic boundary conditions
\[
U\left(  0,y,t\right)  =U\left(  2\pi/\alpha,y,t\right)  ,\ U\left(
x,0,t\right)  =U\left(  x,2\pi,t\right)  \ .
\]
The vorticity form of NS equation (\ref{eqn-NS}) is
\begin{equation}
\omega_{t}+u\omega_{x}+v\omega_{y}-\nu\bigtriangleup\omega=0,\ \ \omega
=v_{x}-u_{y}. \label{eqn-NS-vorticity}%
\end{equation}
It is convenient to introduce the stream function $\psi$ such that
$\omega=-\bigtriangleup\psi$ and $U=\nabla^{\perp}\psi=\left(  \psi_{y}%
,-\psi_{x}\right)  $.

In the numerical and experimental study of $2$D turbulence, it has often been
observed (\cite{mongomery-et-91} \cite{yin-et-final state}
\cite{bouchet-simmonet09}) that the solutions to the two-dimensional
Navier-Stokes (NS) equations with small viscosity rapidly approach certain
long-lived coherent structures. Evidences also suggested that these
quasi-stationary, or metastable solutions are closely related to stationary
solutions of the inviscid Euler equations%
\[
\omega_{t}+u\omega_{x}+v\omega_{y}=0,\ \ \omega=v_{x}-u_{y}.
\]
Since there is no forcing in (\ref{eqn-NS-vorticity}), when $t\rightarrow
\infty$, $\left\Vert \omega\left(  t\right)  \right\Vert _{L^{2}}\rightarrow0$
in the viscous time scale $O\left(  \frac{1}{\nu}\right)  $, where
$\omega\left(  t\right)  $ is the solution of (\ref{eqn-NS-vorticity}) with
initial data$\ \omega\left(  0\right)  \in L^{2}$. We are interested in the
dynamics of (\ref{eqn-NS-vorticity}), particularly, the appearance and
persistence of coherent states in the intermediate time scale$\ \left(
0,T\right)  $, where $1\ll T\ll O\left(  \frac{1}{\nu}\right)  $.\ The first
step is to prove that nearby solutions converge rapidly to these coherent
states in a time scale $T\ll O\left(  \frac{1}{\nu}\right)  $. Such
metastability problem is also called enhanced damping in the literature. Among
the candidates of Euler steady solutions to explain the coherent structures,
some authors (e.g. \cite{bouchet-simmonet09} \cite{mongomery-et-91}
\cite{yin-et-final state}) suggested that certain maximal entrophy solutions
of the inviscid Euler equation are the most probable quasi-stationary that one
would observe. The simplest of such maximal entrophy solutions is the
Kolmogorov flow (also called bar states in \cite{yin-et-final state}), that
is, $u_{0}=\left(  \sin y,0\right)  $ or $\left(  \cos y,0\right)  $. The
solution to (NS) with initial data $u_{0}$ is $u^{\nu}\left(  t,y\right)
=e^{-\nu t}\left(  \sin y,0\right)  $. The linearized (NS) equation near
$u^{\nu}\ $is
\begin{equation}
\partial_{t}\omega=\nu\Delta\omega-e^{-\nu t}\left[  \sin y\partial_{x}\left(
1+\Delta^{-1}\right)  \right]  \omega=L\left(  t\right)  \omega,
\label{eqn-bar-LNS}%
\end{equation}
where $\nu$ is the viscosity and $\omega$ is the vorticity perturbation. In
\cite{beck-wayne}, Beck and Wayne studied the following approximation of the
linearized problem
\begin{equation}
\partial_{t}\omega=\nu\Delta\omega-e^{-\nu t}\sin y\partial_{x}\omega
=\tilde{L}\left(  t\right)  \omega, \label{LNS-appro}%
\end{equation}
by dropping the nonlocal term $e^{-\nu t}\sin y\partial_{x}\Delta^{-1}\omega$
in (\ref{eqn-bar-LNS}). Define the following weighed $H^{1}\ $space for
non-shear vorticity functions
\begin{align}
Z  &  ={\Huge \{}\sum_{k\neq0}\omega=\omega_{k}\left(  y\right)  e^{ikx}\in
L^{2},\label{space-Z-B-W}\\
\left\Vert \omega\right\Vert _{Y}^{2}  &  :=\sum_{k\neq0}\left[  \left\Vert
\omega_{k}\right\Vert _{2}^{2}+\sqrt{\frac{\nu}{\left\vert k\right\vert }%
}\left\Vert \partial_{y}\omega_{k}\right\Vert _{2}^{2}+\frac{1}{\sqrt{\nu
}\left\vert k\right\vert ^{\frac{3}{2}}}\left\Vert C^{k}\omega_{k}\right\Vert
_{2}^{2}\right]  <\infty{\Huge \},}\nonumber
\end{align}
where $C^{k}\omega_{k}=-ike^{\nu t}\left(  \cos y\right)  \omega_{k}$. It was
proved in \cite{beck-wayne} that: For any $\tau>0$ and $T\in\left[
0,\frac{\tau}{\nu}\right]  $, there exist constants $K,\ M$ such that: if
$\nu$ is small enough, then the solution to (\ref{LNS-appro}) with initial
data $\omega\left(  0\right)  \in Z$ satisfies the estimate
\[
\left\Vert \omega\left(  t\right)  \right\Vert _{Z}^{2}\leq Ke^{-M\sqrt{\nu}%
t}\left\Vert \omega\left(  0\right)  \right\Vert _{Z}^{2},\ t\in\left[
0,T\right]  \text{. }%
\]
The proof used Villani's hypocoercivity method (\cite{villani-hypocoecivity}).
For the full linearized NS equation, numerical evidences in \cite{beck-wayne}
suggest the same decay rate $O\left(  e^{-\sqrt{\nu}t}\right)  $.

In this paper, we study the full linearized equation (\ref{eqn-bar-LNS}) and
the nonlinear equation (\ref{eqn-NS}) on a torus
\[
\mathbb{T}_{\alpha}=\left\{  0<y<2\pi,0<x<\frac{2\pi}{\alpha}\right\}  ,
\]
with $\alpha\geq1$, which is the sharp stability condition of Kolmogorov flows
for the 2D Euler equation (see Lemma 4.1 of \cite{lin-zeng-Euler-mfld}). Our
first result is about the enhanced damping for the linearized NS\ equation.

\begin{theorem}
\label{thm-linearized}Consider the linearized NS equation (\ref{eqn-bar-LNS})
on $\mathbb{T}_{\alpha}\ $with $\alpha\geq1$. Define the non-shear vorticity
space
\begin{equation}
X=\left\{  \omega\in L^{2}|\ \omega=\sum_{0\neq k\in\mathbf{Z}}\omega
_{k}\left(  y\right)  e^{ik\alpha x}\right\}  . \label{definition-X}%
\end{equation}

i) (Rectangular torus) Consider $\alpha>1$. For any $\tau>0$ and $\delta>0$,
if $\nu$ is small enough, then the solution $\omega\left(  t\right)  \ $to
(\ref{eqn-bar-LNS}) with non-shear initial data $\omega\left(  0\right)  \in
X$ satisfies $\left\Vert \omega\left(  \frac{\tau}{\nu}\right)  \right\Vert
_{L^{2}}<\delta\left\Vert \omega\left(  0\right)  \right\Vert _{L^{2}}.$

ii) (Square torus) Consider $\alpha=1$. Let $P_{1}$ be the orthogonal
projection from the non-shear space
\[
X=\left\{  \omega\in L^{2}|\ \omega=\sum_{k\neq0}\omega_{k}\left(  y\right)
e^{ikx}\right\}
\]
to the space $W_{1}\ $spanned by $\left\{  \cos x,\sin x\right\}  $. For any
$\tau>0$ and $\delta>0$, if $\nu$ is small enough, then the solution to
(\ref{eqn-bar-LNS}) with initial data $\omega\left(  0\right)  \in X$
satisfies
\begin{equation}
\left\Vert \left(  I-P_{1}\right)  \omega\left(  \frac{\tau}{\nu}\right)
\right\Vert _{L^{2}}<\delta\left\Vert \left(  I-P_{1}\right)  \omega\left(
0\right)  \right\Vert _{L^{2}}. \label{enhanced damping torus-linear}%
\end{equation}

\end{theorem}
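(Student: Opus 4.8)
I will treat i) and ii) uniformly; for $\alpha=1$ one works on $(I-P_{1})X$, noting that $W_{1}=\mathrm{span}\{\cos x,\sin x\}$ is the finite-dimensional kernel of the linearized Euler operator when $\alpha=1$ and is invariant under (\ref{eqn-bar-LNS}). Since (\ref{eqn-bar-LNS}) preserves the horizontal frequency, I first expand $\omega=\sum_{k\neq 0}\omega_{k}(y)e^{ik\alpha x}$ and handle the modes separately. On the $k$-th mode the nonlocal operator $\sin y\,\partial_{x}(1+\Delta^{-1})$ has $L^{2}$ norm $O(|k\alpha|^{-2})$, so an elementary energy estimate gives $\tfrac{d}{dt}\|\omega_{k}\|_{2}^{2}\le 2(|k\alpha|^{-2}-\nu|k\alpha|^{2})\|\omega_{k}\|_{2}^{2}$; hence for $|k|\ge C\nu^{-1/4}$ the purely viscous decay already yields $\|\omega_{k}(\tau/\nu)\|_{2}\le e^{-c\tau/\sqrt{\nu}}\|\omega_{k}(0)\|_{2}$ with enormous room. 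It remains to bound, with constants uniform in the (for each $\nu$, finitely many) remaining frequencies, the evolution on a single mode; this is the enhanced-dissipation heart of the matter.

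The second ingredient is the Hamiltonian structure. Ignoring viscosity and the factor $e^{-\nu t}$, the operator $\mathcal{L}\omega=\sin y\,\partial_{x}(1+\Delta^{-1})\omega$ of the linearized Euler flow around $\sin y$ carries the conserved Arnold energy $Q(\omega)=\tfrac12(\|\nabla\psi\|_{2}^{2}-\|\psi\|_{2}^{2})$, $\psi=(-\Delta)^{-1}\omega$. The threshold $\alpha\ge 1$ is precisely what makes $Q$ positive definite on $X$ (resp. on $(I-P_{1})X$ when $\alpha=1$) — the content of Lemma 4.1 of \cite{lin-zeng-Euler-mfld} — and there $Q^{1/2}$ is equivalent to $\|\cdot\|_{\dot H^{-1}}$, the operator $\mathcal{L}$ is skew-adjoint for the $Q$-inner product, hence generates a unitary group, and, again by the Lin--Zeng spectral analysis, it has purely continuous spectrum, i.e. no eigenfunctions. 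Along (\ref{eqn-bar-LNS}) one moreover computes $\tfrac{d}{dt}Q(\omega)=-\nu D(\omega)$ with $D(\omega)=\|\omega\|_{2}^{2}-\|\omega\|_{\dot H^{-1}}^{2}\ge c_{\alpha}\|\omega\|_{2}^{2}$, so $Q$ is a Lyapunov functional for the viscous problem and $D$ is, up to a constant, the quadratic form of $-\Delta$ in the $Q$-geometry; in particular $-\Delta$ there is positive, self-adjoint and has compact resolvent.

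With these in hand, the plan for a single mode is the enhanced-dissipation scheme of Constantin--Kiselev--Ryzhik--Zlato\v{s}, run in the Hilbert space $(X\ominus W_{1},Q)$: after the time change $s=(1-e^{-\nu t})/\nu$ the equation becomes $\partial_{s}\omega=\nu(s)\Delta\omega-\mathcal{L}\omega$ with $\nu(s)\asymp\nu$ over an $s$-interval of length $\asymp\tau/\nu$. Fix the finite-dimensional low-frequency space $V=\mathrm{ran}\,\mathbf{1}_{\{-\Delta\le\Lambda\}}$ with $\Lambda=\Lambda(\tau,\delta)$; on $V$ all of $\|\cdot\|_{2}$, $\|\cdot\|_{\dot H^{-1}}$, $Q^{1/2}$ are equivalent, and, since $\mathcal{L}$ has no eigenfunctions, the RAGE theorem gives $\tfrac1S\int_{0}^{S}\|P_{V}e^{-s\mathcal{L}}\omega\|_{Q}^{2}\,ds\to 0$; feeding this into the CKRZ-type iteration over short time-intervals shows that for every $\delta'>0$ there is $\nu_{0}>0$ with $Q(\omega(\tau/\nu))<\delta' Q(\omega(0))$ for $\nu<\nu_{0}$, uniformly in $k$. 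The monotonicity $\tfrac{d}{dt}Q=-\nu D$ together with a parabolic-regularization argument for the complementary high-frequency part then upgrades this $\dot H^{-1}$-type decay to the stated $L^{2}$ bound $\|\omega(\tau/\nu)\|_{2}<\delta\|\omega(0)\|_{2}$, on choosing $\Lambda$, then $\delta'$ (hence $\nu_{0}$), small enough.

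The main obstacle is exactly this last coupling: the RAGE input is qualitative (no rate for the time average), so the iteration on $V$ must be carried out carefully enough that the smallness achieved at the fixed time $\tau/\nu$ is not destroyed in returning from the Hamiltonian norm $Q^{1/2}\sim\|\cdot\|_{\dot H^{-1}}$ to $\|\cdot\|_{2}$, for which the linearized dynamics offers no monotone quantity and only yields a crude $O(\nu^{-1/2})$ growth bound. This is why the scheme is organized around the conserved form $Q$ and around isolating the finite-dimensional subspace where the energy estimate degenerates and RAGE must take over. A secondary point, needed already to set up the Hamiltonian framework, is verifying that for $\alpha\ge1$ the linearized Euler operator has no embedded neutral modes on the non-shear space besides $\cos x,\sin x$ at $\alpha=1$ — precisely the sharp stability threshold.
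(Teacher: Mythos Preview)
Your overall plan --- Hamiltonian structure plus RAGE plus the Constantin--Kiselev--Ryzhik--Zlato\v{s} iteration --- is exactly the paper's strategy, but you have the energy functional at the wrong level, and this is a genuine gap rather than a cosmetic slip.

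You take $Q(\omega)=\tfrac12(\|\nabla\psi\|_{2}^{2}-\|\psi\|_{2}^{2})$, equivalent to $\|\omega\|_{\dot H^{-1}}^{2}$, and claim it is conserved by the linearized Euler flow and satisfies $\tfrac{d}{dt}Q=-\nu D$ along (\ref{eqn-bar-LNS}). A direct computation shows this is false: writing $\phi=\omega-\psi$ one gets, for the inviscid part,
\[
\tfrac{d}{dt}Q=-e^{-\nu t}\bigl\langle \phi,\ (-\Delta)^{-1}\sin y\,\partial_{x}\phi\bigr\rangle,
\]
and $(-\Delta)^{-1}\sin y\,\partial_{x}$ is \emph{not} skew-adjoint (its adjoint is $-\sin y\,\partial_{x}(-\Delta)^{-1}$, and $(-\Delta)^{-1}$ does not commute with $\sin y$). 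So $Q$ is neither an invariant for linearized Euler nor a Lyapunov functional for (\ref{eqn-bar-LNS}), and the skew-adjointness of $\mathcal{L}$ in the $Q$-inner product, on which your RAGE step rests, does not hold.

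The correct quadratic form is one derivative higher: with $L=1+\Delta^{-1}$ and $J=-\sin y\,\partial_{x}$ one has $JL$ skew in the inner product $\langle L\cdot,\cdot\rangle$, and
\[
\frac{d}{dt}\bigl(\|\omega\|_{2}^{2}-\|\nabla\psi\|_{2}^{2}\bigr)=-2\nu\bigl(\|\nabla\omega\|_{2}^{2}-\|\omega\|_{2}^{2}\bigr),
\]
both forms being positive definite on $X$ (resp.\ $(I-P_{1})X$) for $\alpha>1$ (resp.\ $\alpha=1$). Since $\langle L\omega,\omega\rangle$ is already equivalent to $\|\omega\|_{L^{2}}^{2}$, the CKRZ iteration run in \emph{this} inner product directly yields the $L^{2}$ decay; the ``main obstacle'' you identify --- upgrading an $\dot H^{-1}$ bound to $L^{2}$ --- is entirely an artifact of the wrong functional and does not arise. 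The preliminary split into $x$-modes $|k|\gtrless C\nu^{-1/4}$ and the time change $s=(1-e^{-\nu t})/\nu$ are then also unnecessary: the paper compares (\ref{eqn-bar-LNS}) with the linearized Euler flow directly, using an $H^{1}$ growth bound for the latter, and runs the high/low frequency dichotomy with respect to the full $-\Delta$ spectral projection, uniformly over all $k$.
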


Since $\tau$ can be arbitrarily small, above result implies a much enhanced
decay in the time scale $O\left(  \frac{\tau}{\nu}\right)  $ compared with the
viscous time scale $O\left(  \frac{1}{\nu}\right)  $. For shear initial data
$\omega\left(  0\right)  =\omega_{0}\left(  y\right)  ,\ $the linearized NS
equation (\ref{eqn-bar-LNS}) is reduced to the heat equation $\partial
_{t}\omega=\nu\partial_{yy}\omega$ and there is no enhanced decay. On the
square torus, there are two additional kernels $\left\{  \cos x,\sin
x\right\}  $ of the operator $1+\Delta^{-1}$, which correspond to exact
solutions $e^{-\nu t}\left\{  \cos x,\sin x\right\}  $ of the Navier-Stokes
equations. These two additional kernels (so called anomalous modes in
\cite{beck-wayne}) need to be removed for the enhanced damping to hold true.

For the nonlinear Navier-Stokes equation, we have the following result.

\begin{theorem}
\label{thm-nonlinear}Consider the nonlinear NS equation (\ref{eqn-NS}) on
$\mathbb{T}_{\alpha}\ $with $\alpha\geq1$. Denote $P_{2}$ to be the projection
of $L^{2}\left(  \mathbb{T}_{\alpha}\right)  $ to the subspace $W_{2}%
=span\left\{  \cos y,\sin y\right\}  $.

i) (Rectangular torus) Suppose $\alpha>1$. There exist $d>0$, such that: for
any $\tau>0$ and $\delta>0$, if $\nu$ is small enough, then any solution to
(\ref{eqn-NS}) with initial data $\omega\left(  0\right)  \in L^{2}$ and
\begin{equation}
\left\Vert \left(  I-P_{2}\right)  \omega\left(  0\right)  \right\Vert
_{L^{2}}\leq d\nu, \label{initial-data-small}%
\end{equation}
satisfies
\[
\left\Vert P_{\neq0}\omega\left(  \frac{\tau}{\nu}\right)  \right\Vert
_{L^{2}}<\delta\left\Vert P_{\neq0}\omega\left(  0\right)  \right\Vert
_{L^{2}}.
\]
Here, $P_{\neq0}$ is the projection of $L^{2}$ to the non-shear space $X$,
that is,
\[
P_{\neq0}\omega=\omega-\frac{\alpha}{2\pi}\int_{0}^{\frac{2\pi}{\alpha}}\omega
dx.
\]

ii) (Square torus) Suppose $\alpha=1$. There exist $d>0$, such that: for any
$\tau>0$ and $\delta>0$, if $\nu$ is small enough, then any solution to
(\ref{eqn-NS}) with initial data $\omega\left(  0\right)  \in L^{2}$ and
$\left\Vert \left(  I-P_{2}\right)  \omega\left(  0\right)  \right\Vert
_{L^{2}}\leq d\nu$, satisfies
\begin{equation}
\inf_{0\leq t\leq\frac{\tau}{\nu}}\left\Vert \left(  1-P_{1}\right)  P_{\neq
0}\omega\left(  t\right)  \right\Vert _{L^{2}}<\delta\left\Vert P_{\neq
0}\omega\left(  0\right)  \right\Vert _{L^{2}}. \label{enhanced damping torus}%
\end{equation}
Here, the projections $P_{1},P_{\neq0}$ are defined as above.
\end{theorem}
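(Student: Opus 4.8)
The plan is to deduce Theorem~\ref{thm-nonlinear} from the linear estimate of Theorem~\ref{thm-linearized} by a bootstrap argument: write the solution as a slowly decaying Kolmogorov-type shear plus a non-shear perturbation of size $O(\nu)$, and treat the nonlinear self-interaction and the drift of the base shear as small corrections to the linearized dynamics. First I would set up the perturbation system. Writing $\omega(t)=\mu(y,t)+\theta(x,y,t)$ with $\mu=\frac{\alpha}{2\pi}\int_0^{2\pi/\alpha}\omega\,dx$ and $\theta=P_{\neq0}\omega$, the $x$-average of the vorticity equation gives $\partial_t\mu=\nu\partial_{yy}\mu-\partial_y\overline{v_\theta\theta}$ (bar $=$ $x$-average, $v_\theta$ the $y$-velocity of $\theta$), and subtracting yields $\partial_t\theta=\nu\Delta\theta+\mathcal{L}_\mu\theta+Q(\theta)$, where $\mathcal{L}_\mu\theta=-\bar u_\mu\,\partial_x\theta-(\partial_y\mu)\,\partial_x\Delta^{-1}\theta$ is the Navier--Stokes linearization about the shear $\mu$ (for $\mu=e^{-\nu t}A\cos(y-y_0)$ it is $A$ times a $y$-translate of $L(t)-\nu\Delta$ from \eqref{eqn-bar-LNS}) and $Q(\theta)=-P_{\neq0}\nabla\cdot(U_\theta\theta)$ is quadratic. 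The hypothesis \eqref{initial-data-small} then reads $\|\theta(0)\|_{L^2}\le d\nu$ and $\mu(0)=A\cos(y-y_0)+\rho_0$ with $A=\|P_2\omega(0)\|$ and $\|\rho_0\|_{L^2}\le d\nu$; here $A$ is the amplitude of the Kolmogorov component, which we take bounded below, the degenerate small-$A$ case being the near-rest regime governed by pure viscous decay.

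Next I would control the drift of the base shear along the flow. Since $\int_0^{2\pi}\mu(\cdot,t)\,dy\equiv 0$, the lowest shear modes obey $\partial_tP_2\mu=-\nu P_2\mu+O(\|\theta\|_{L^2}^2)$, hence $P_2\mu(t)=e^{-\nu t}P_2\mu(0)+O(\int_0^t\|\theta\|^2)$, while $(I-P_2)\mu$ has $y$-frequencies $|n|\ge2$ and therefore decays at rate $\ge4\nu$. Thus, as long as $\sup_{[0,t]}\|\theta\|\le Md\nu$, the shear stays within $O(d\nu)$ (in a norm like $C^1_y$) of the pure Kolmogorov profile $\mu_0(t):=e^{-\nu t}A\cos(y-y_0)$, so $\mathcal{L}_\mu=\mathcal{L}_{\mu_0}+\mathcal{L}_\rho$ with $\|\rho(t)\|$ small. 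Let $\Phi(t,s)$ be the propagator of the genuinely linear equation $\partial_t\theta=\nu\Delta\theta+\mathcal{L}_{\mu_0}\theta$ on $X$, respectively on the non-anomalous part $(I-P_1)X$ when $\alpha=1$, on which the linear dynamics closes because $W_1\subset\ker\mathcal{L}_{\mu_0}$. Rescaling the amplitude ($t\mapsto At$, $\nu\mapsto\nu/A$) and translating in $y$ leave both \eqref{eqn-bar-LNS} and the condition $\alpha\ge1$ invariant, and a time shift by $s\le\tau/\nu$ only multiplies the amplitude by $e^{-\nu s}\ge e^{-\tau}>0$; hence Theorem~\ref{thm-linearized} applies and gives, for $\nu$ small, that $\|\Phi(\tau/\nu,0)\|$ is as small as desired (on $X$ for $\alpha>1$, on $(I-P_1)X$ for $\alpha=1$). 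From the same proof I would extract the uniform bound $\|\Phi(t,s)\|\le C_0$ for $0\le s\le t\le\tau/\nu$ (from the conserved, $\dot H^{-1}$-equivalent quadratic form of the stable linearized Euler flow together with viscous dissipation) and the short-time parabolic smoothing $\|\Phi(t,s)\nabla\cdot f\|_{L^2}\lesssim(\nu(t-s))^{-1/2}\|f\|_{L^2}$ for $0<t-s\le1$, uniform in $\nu$ since the lower-order coefficients of $\mathcal{L}_{\mu_0}$ are smooth and $\nu$-independent.

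Then I would close the nonlinear estimate by Duhamel and continuity. From $\theta(t)=\Phi(t,0)\theta(0)+\int_0^t\Phi(t,s)[Q(\theta)+\mathcal{L}_\rho\theta](s)\,ds$, splitting the integral at $s=t-1$ and using the smoothing near the diagonal and $\|\Phi\|\le C_0$ away from it, together with $\|U_\theta\theta\|_{L^{4/3}}\lesssim\|\theta\|_{L^2}^2$ and $\|\mathcal{L}_\rho\theta\|\lesssim\|\rho\|_{L^2}\|\theta\|_{H^1}$ (controlling $\|\theta\|_{H^1}$ by the same short-time smoothing after an initial layer and by $\nu\int_0^T\|\nabla\theta\|^2\lesssim\|\theta(0)\|^2$), one gets $\sup_{[0,T]}\|\theta\|\le C_0\|\theta(0)\|+C(\tau)\,d\,\sup_{[0,T]}\|\theta\|$ for $T=\tau/\nu$; the ansatz $\sup_{[0,T]}\|\theta\|\le 2C_0d\nu$ then closes once $d$ is small, after which $\|\theta(T)\|\le(\|\Phi(T,0)\|+C(\tau)d)\|\theta(0)\|<\delta\|\theta(0)\|$, which is part~(i).

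Finally, for the square torus I would run the same scheme on $\theta^\perp=(I-P_1)P_{\neq0}\omega$, which forms a closed subsystem, the complementary mode $P_1\theta$ being slaved and only bounded by $\lesssim\|\theta(0)\|$ since it decays at the viscous rate $e^{-\nu t}$. However, since Theorem~\ref{thm-linearized}(ii) supplies smallness of the linear propagator only at the single time $\tau/\nu$, through a RAGE time-average rather than a quantitative rate, the Duhamel correction can be absorbed only after integrating in $t$; this yields smallness of the time-average, and hence of the infimum, in \eqref{enhanced damping torus}. I expect the main obstacle to be exactly this point: upgrading the single-time, non-autonomous, RAGE-based estimate of Theorem~\ref{thm-linearized} into a form robust enough (uniform boundedness, repeatability on subintervals, and sufficient smoothing) to survive the nonlinear perturbation over the long interval $[0,\tau/\nu]$, the quantitative input having to come from the $\nu\Delta$-dissipation acting on the frequencies bounded below while the finitely many remaining low frequencies are handled by the RAGE time-average and the Hamiltonian structure.
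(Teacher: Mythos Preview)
Your strategy diverges substantially from the paper's, and the Duhamel bootstrap you sketch does not close with the quantifier order stated in the theorem ($d$ chosen \emph{before} $\tau$). The integral $\int_0^{\tau/\nu}\Phi(t,s)[Q(\theta)+\mathcal L_\rho\theta]\,ds$ runs over an interval of length $\tau/\nu$, while the parabolic smoothing you invoke carries factors like $(\nu(t-s))^{-3/4}$ (for the $L^{4/3}\to L^2$ divergence-form estimate) and the only linear input from Theorem~\ref{thm-linearized} is smallness at the single endpoint, not a decay rate for $\Phi$. Tracking the powers, the far-from-diagonal piece contributes something of order $\tau\nu^{-7/4}\,(\sup\|\theta\|)^2$, which under the ansatz $\sup\|\theta\|\le 2C_0 d\nu$ is $\sim d^2\tau\,\nu^{1/4}$ in the \emph{best} arrangement but forces $d$ to depend on $\tau$ once you try to absorb it as $C(\tau)\,d\,\sup\|\theta\|$; the $\mathcal L_\rho$ term is worse, since $(\partial_y\rho)\partial_x\Delta^{-1}\theta$ is not controlled by $\|\rho\|_{L^2}\|\theta\|_{H^1}$ without an extra derivative on $\rho$. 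You yourself flag exactly this obstacle at the end, and it is real: Theorem~\ref{thm-linearized} alone does not supply enough decay of $\Phi$ to eat a Duhamel integral over $[0,\tau/\nu]$.

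The paper avoids this entirely by observing that the \emph{nonlinear} equation already satisfies the same dissipation identity as the linearized one: since $U^\nu\cdot\nabla\psi^\nu=0$ and $\int U^\nu\cdot\nabla\tfrac12(\omega^\nu)^2=0$, the quadratic transport term is exactly orthogonal to $L\omega^\nu=\omega^\nu-\psi^\nu$, so
\[
\frac{d}{dt}\langle L\omega^\nu,\omega^\nu\rangle=-2\nu\int(|\nabla\omega^\nu|^2-|\omega^\nu|^2)
\]
holds for the full solution. After splitting off the shear part (and, on the square torus, the anomalous modes), one obtains a dissipation inequality for $\|\omega_n^\nu\|_X^2$ alone, with the coupling to the shear controlled by the smallness $\|\omega_s^\nu\|_{L^2}\le Cd\nu$; here $d$ is fixed once, independently of $\tau$. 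The RAGE argument is then run \emph{directly on the nonlinear solution}: on each $O(1)$ subinterval $[t_0,t_0+t_1]$ one compares $\omega_n^\nu$ to the linearized \emph{Euler} flow started at $\omega_n^\nu(t_0)$, and the energy computation gives $\tfrac{d}{dt}\|\omega_n^\nu-\omega^0\|_X^2\le C\nu(1+t^2)\|\omega^0\|_{H^1}^2$, so over a \emph{bounded} time $t_1$ the discrepancy is $O(\nu)$. The high/low frequency dichotomy then proceeds exactly as in the linear proof, interval by interval, with the smallness of $\nu$ (depending on $\tau,\delta$) absorbing the finitely many constants. In short: rather than viewing the nonlinearity as a perturbation of the linear semigroup over the whole window, the paper exploits that $\langle L\cdot,\cdot\rangle$ is an exact Lyapunov functional for the nonlinear flow, so only \emph{local-in-time} comparison with the inviscid dynamics is ever needed.
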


In the above theorem, the metastability of Kolmogorov flow is proved for
perturbations of the size $\nu$. More precisely, it is shown that the
non-shear part of the perturbation is reduced to a factor $\delta$ of the
initial size before the time scale$\frac{\tau}{\nu}$, which is much smaller
than the viscous time scale $\frac{1}{\nu}$. Moreover, by choosing the initial
data to be smaller, we can ensure that $\left\Vert \left(  I-P_{2}\right)
\omega\left(  t\right)  \right\Vert _{L^{2}}\leq d\nu$ for all $t>0$, thus we
can repeatedly use Theorem \ref{thm-nonlinear} to get the rapid decay of the
non-shear part before the viscous time scale. We refer to Remark
\ref{remark-repeat-decay} for more details.

Next, we discuss some key ideas in the proof of Theorems \ref{thm-linearized}
and \ref{thm-nonlinear}. Our work is partly motivated by the work of
Constantin et al. \cite{kiselev-annals-mixing} for the linear reaction
diffusion equation
\[
\partial_{t}\phi+v_{0}\cdot\nabla\phi-\nu\Delta\phi=0,
\]
with an incompressible flow $v_{0}\left(  x\right)  $. In
\cite{kiselev-annals-mixing}, the enhanced damping in the sense of Theorem
\ref{thm-linearized} is proved under the assumption that the operator
$v_{0}\cdot\nabla$ has no non-constant eigenfunction in $H^{1}$. Their proof
is to consider the high and low frequency parts of the solution $\phi\left(
t\right)  \ $separately. For the high frequency part (i.e. $\left\Vert
\nabla\phi\right\Vert _{L^{2}}\thickapprox N\left\Vert \phi\right\Vert
_{L^{2}}$ for $N$ large), the enhanced damping is ensured by the energy
dissipation law%
\begin{equation}
\partial_{t}\left\Vert \phi\right\Vert _{L^{2}}^{2}=-\nu\left\Vert \nabla
\phi\right\Vert _{L^{2}}^{2}.\label{dissipation-law-heat}%
\end{equation}
The lower frequency part is shown to converge to zero in the time average
sense, by using the following RAGE Theorem for the unitary group $e^{itL}$
with the self-adjoint generator $L=iv_{0}\cdot\nabla$.

\textbf{Theorem (RAGE)} \cite{CFKS} Let $L$ be a self-adjoint operator in a
Hilbert space $H$, $P_{c}$ is the projection to the continuous spectrum space
of $L$ and $B$ is any compact operator, then
\[
\frac{1}{T}\int_{0}^{T}\left\Vert Be^{itL}P_{c}\psi\right\Vert _{H}%
^{2}dt\rightarrow0,\ \text{when }T\rightarrow\infty.
\]

In the proof of enhanced damping, $B$ is taken to be the projection to the low
frequency modes. Then the RAGE Theorem implies that the low frequency modes
decay in the time average sense.

To apply these ideas to prove the enhanced damping for the linearized
Navier-Stokes equation (\ref{eqn-bar-LNS}), there are a few difficulties to be
overcome. First, for the equation (\ref{eqn-bar-LNS}), there is no obvious
dissipation law as (\ref{dissipation-law-heat}). We derive the following
identity
\begin{equation}
\frac{d}{dt}\int_{\mathbb{T}_{\alpha}}(|\omega|^{2}-|\bigtriangledown\psi
|^{2})dxdy=-2\nu\int_{\mathbb{T}_{\alpha}}(|\bigtriangledown\omega
|^{2}-|\omega|^{2})dxdy, \label{dissipation-law-NS}%
\end{equation}
where $\psi=\left(  -\Delta\right)  ^{-1}\omega$ is the stream function. When
$\alpha>1$, the quadratic forms on both sides of (\ref{dissipation-law-NS})
are positive definite for non-shear vorticity (i.e. $\omega\in X$). When
$\alpha=1$, the positivity is still true in the space $X_{1}=\left(
I-P_{1}\right)  X_{0}$. This provides a substitute of
(\ref{dissipation-law-heat}).

Second, even if we ignore the factor $e^{-\nu t}$ in (\ref{eqn-bar-LNS}), the
linearized Euler operator $A=-\sin y\partial_{x}\left(  1+\Delta^{-1}\right)
$ is not anti-self-adjoint and the RAGE\ theorem cannot be applied directly to
$e^{tA}$. An important observation is that $A$ can be written in the
Hamiltonian form $A=JL$, where
\begin{equation}
J=-\sin y\partial_{x},\ \ \ L=1+\Delta^{-1}\label{defn-J-L}%
\end{equation}
are anti-selfadjoint and adjoint operators respectively in $L^{2}$. When
$\alpha>1$, since $L=1+\Delta^{-1}>0$ on the non-shear space $X$, we can
define a new inner product by $\left[  \cdot,\cdot\right]  =\left\langle
L\cdot,\cdot\right\rangle \ $on $X$, which is equivalent to the $L^{2}\ $inner
product. We observe that the operator $A$ is anti-selfadjoint in the space
$\left(  X,\left[  \cdot,\cdot\right]  \right)  $. Moreover, on the space
$X,\ $the operator $A$ can be shown to have no embedded eigenvalues in the
continuous spectra. Thus RAGE theorem can be applied to the semigroup $e^{tA}$
to show the decay of the low frequency part in the time average sense. The
linear enhanced damping for (\ref{eqn-bar-LNS}) then follows similarly as in
\cite{kiselev-annals-mixing}. For the square torus ($\alpha=1$), there are
additional anomalous modes $\left\{  \cos x,\sin x\right\}  $ lying in
$\ker\left(  1+\Delta^{-1}\right)  $. For any initial data $\omega\left(
0\right)  \in X$, we note that $\omega_{1}=\left(  I-P_{1}\right)  \omega$
satisfies the equation
\[
\partial_{t}\omega_{1}=\nu\Delta\omega_{1}-e^{-\nu t}\left[  \left(
I-P_{1}\right)  \sin y\partial_{x}\left(  1+\Delta^{-1}\right)  \right]
\omega_{1}.
\]
Let
\[
A_{1}=-\left(  I-P_{1}\right)  \sin y\partial_{x}\left(  1+\Delta^{-1}\right)
=\left(  I-P_{1}\right)  JL,
\]
then it can be checked that $A_{1}$ is anti-selfadjoint on the space
$X_{1}=\left(  I-P_{1}\right)  X$ in the inner product $\left[  \cdot
,\cdot\right]  =\left\langle L\cdot,\cdot\right\rangle $, where the positivity
of $L|_{X_{1}}$ is used. So applying the RAGE theorem to the semigroup
$e^{tA_{1}}$ on $X_{1}$, we can again show that the low frequency part of
$\left(  I-P_{1}\right)  e^{tJL}\omega\left(  0\right)  $ decays in the time
average sense.

For the nonlinear NS equation (\ref{eqn-NS-vorticity}), the evolution of the
shear and non-shear parts are strongly coupled. For an initial perturbation
$\omega\left(  0\right)  \ $of size $O\left(  \nu\right)  \ $in $L^{2}$, the
interaction terms are controllable and the nonlinear enhanced damping
(metastability) still holds true. On the square torus, the analysis for the
nonlinear problem is more involved due to the anomalous modes. We decompose
the perturbation into four parts lying in: $W_{1}=span\left\{  \cos x,\sin
x\right\}  $ and its complementary subspace $W_{1}^{\perp}\ $in the non-shear
space, $W_{2}=span\left\{  \cos y,\sin y\right\}  $ and its complementary
subspace $W_{2}^{\perp}$ in the shear space. By carefully analyzing the
interaction of these four parts, we can show that the interaction terms in the
nonlinear term $U\cdot\bigtriangledown\omega$ are under control when
$\left\Vert \omega\left(  0\right)  \right\Vert _{L^{2}}=O\left(  \nu\right)
$. As a result, we can still split the non-shear vorticity into the low and
high frequency parts, and treat them separately as for the linearized
equation. Then the nonlinear metastability can be proved. On the square torus,
numerical evidences (\cite{bouchet-simmonet09} \cite{bouchett-reports-09})
suggested that the dipole states of the form $\omega_{0}=\cos x+\cos y$ or
$\sin x+\sin y\ $appear more often in the long time dynamics of 2D Turbulence.
The dipole flows are nonparallel and the enhanced damping problem is much more
subtle to study. At the end of Section \ref{section-metasta}, we discuss some
partial results and difficulties with the dipole states. In particular, in
Proposition \ref{prop-rage-dipole}, we give a RAGE type theorem for the
linearized Euler equation at dipoles.

In our proof of linear and nonlinear enhanced damping for Navier-Stokes
equation, the Hamiltonian structures of the linearized Euler operator play an
important role both in the derivation of the dissipation law
(\ref{dissipation-law-NS}) and in the control of the low frequency part. As a
further application of these Hamiltonian structures, we consider the linear
inviscid damping of more general shear flows $\left(  U\left(  y\right)
,0\right)  $. We study two classes of shear flows. One class is the flows
without inflection points, which are spectrally stable by the classical
Rayleigh criterion. The other class (called class $\mathcal{K}^{+}$) is the
flows $U\left(  y\right)  \ $with one inflection value $U_{s}$ and
$-\frac{U^{\prime\prime}}{U-U_{s}}>0$. These two classes cover all the shear
flows whose nonlinear stability in $L^{2}$ vorticity might be studied by the
energy-Casimir method (see Remark \ref{rmk-energy-casimir}). The flows in the
first class are nonlinearly stable for any $x$ period $2\pi/\alpha$ and are
minimizers of the energy-Casimir functional. The flows in the second class are
stable only when $\alpha>\alpha_{\max}$ for some critical wave number
$\alpha_{\max}$, and are maximizers of the energy-Casimir functional. These
shear flows often appear as long lived coherent states in 2D turbulence. For
example, the Kolmogorov flows which are in class $\mathcal{K}^{+}$.

In Theorem \ref{thm-rage-general-shear}, we give a RAGE\ theorem on the
non-shear subspace $X\ $of $L^{2}\ $for stable shear flows in the first class
and in class $\mathcal{K}^{+}$ with $\alpha>\alpha_{\max}$. As a consequence,
the decay of velocity (in the time average sense) is proved for any non-shear
initial data with $L^{2}$ vorticity. Another consequence is the decay of low
frequency modes in the $L^{2}$ norm of vorticity, which gives a justification
of the dual cascade of 2D turbulence in a weak sense (see Remark
\ref{remark-dual-cascade}). For the critical case $\alpha=\alpha_{\max}$, the
linearized Euler operator $JL$ (defined in (\ref{defn-J-L})) has zero as an
embedded eigenvalue due to the nontrivial $\ker L$. This case is very similar
to the case of bar states on the square torus and can be treated similarly.
The linear damping can be obtained by projecting out $\ker L.$

The flows in class $\mathcal{K}^{+}$ are unstable when $\alpha<\alpha_{\max}$
(\cite{lin-shear}). Moreover, by using an instability index theory recently
developed in \cite{lin-zeng-hamiltonian} for Hamiltonian PDEs, we give an
exact counting formula (Proposition \ref{prop-index-JL}) for the dimension of
unstable modes of the linearized Euler equation. A corollary of this formula
is that $L|_{E^{c}}\geq0$, where $E^{c}$ is the center space corresponding to
the spectra of the linearized Euler operator $JL$ (defined in (\ref{defn-J-L}%
)) on the imaginary axis. Then the RAGE theorem, and as a consequence the
damping of velocity, are obtained for the linearized Euler equation on $E^{c}$.

The inviscid damping was first known for the Couette flow in the 1907 work of
Orr (\cite{orr}). In recent years, the inviscid damping phenomena attracted
new attention. In \cite{Lin-zeng}, it was showed that if we consider initial
(vorticity) perturbation in the Sobolev space $H^{s}$ $\left(  s<\frac{3}%
{2}\right)  $ then the nonlinear damping is not true due to the existence of
nonparallel steady flows of the form of Kelvin's cats eye near Couette. In
\cite{bedrossian-masmoudi}, nonlinear inviscid damping was proved for
perturbations near Couette in Gevrey class (i.e. almost analytic).

The linear inviscid damping for more general shear flows was also recently
studied by some authors. Monotone shears were considered in
\cite{ziillinger-arma} for the case near Couette, and in \cite{wei-zhang-zhao}
for the more general case. The optimal decay rates $O\left(  1/t\right)
,O\left(  1/t^{2}\right)  $ for the horizontal and vertical velocities were
obtained for initial vorticity in $H^{1}$ and $H^{2}$ respectively. In
\cite{wei-zhang-zhao-2}, general shear flows satisfying some nondegeneracy
conditions were considered, and certain space-time estimates for velocities of
the linearized Euler equation were obtained for initial vorticity in $H^{1}$.
The optimal decay rates were also obtained in \cite{wei-zhang-zhao-2} for a
special class of symmetric shear flows. The non-existence of embedded
eigenvalues was assumed in above works.

We comment on some differences of our results on inviscid damping with the
previous work. First, for the two classes of shear flows we considered, we do
not need to assume the non-existence of embedded eigenvalues. This assumption
is proved to be true for flows without inflection points and for flows in
class $\mathcal{K}^{+}$ with $\alpha>\alpha_{\max}$. But for $\alpha
=\alpha_{\max}$ and some $\alpha<\alpha_{\max}$, zero is indeed an embedded
eigenvalue. For these cases, the inviscid damping can still be proved as in
Corollary \ref{cor-decay-general-shear} ii) and Theorem
\ref{thm-damping-center} ii), as well as for the Kolmogorov flow on the square
torus. Second, the inviscid damping results we obtained are for the initial
vorticity in $L^{2}$. In \cite{wei-zhang-zhao} \cite{wei-zhang-zhao-2},
initial vorticity with higher regularity was considered and the linear damping
for $L^{2}$ vorticity was not studied. In \cite{wei-zhang-zhao}
\cite{wei-zhang-zhao-2} \cite{ziillinger-arma}, the linearized Euler equation
was studied in a channel. Here, we treat the cases of the channel and tori in
a unified way. In some sense, the RAGE theorem type results imply more
information than just the damping of velocities. For example, the decay of low
frequency part of the vorticity does not follow from the decay of velocity.
Third, our approach, which exploits the Hamiltonian structures of the Euler
equation, does not rely on ODE techniques. Therefore, it could be used for the
problems involving nonparallel flows, see Proposition \ref{prop-rage-dipole}
for dipoles and Theorem 11.7 in \cite{lin-zeng-hamiltonian} for general steady
Euler flows. Moreover, more information on the damping could be derived from
the regularity properties of the spectral measure of $JL$ (see Remark
\ref{rmk-regularity-measure}). This might provide an alternative approach to
study the inviscid damping in other problems.

This paper is organized as follows. In Section 2, we study the linear enhanced
damping for the linearized Navier-Stokes equation. In Section 3, the nonlinear
enhanced damping (i.e. metastability of Kolmogorov flows) is proved for the
nonlinear Navier-Stokes equation. We discuss the cases of rectangular and
square tori separately in Sections 2 and 3. In Section 4, the linear inviscid
damping is proved for both stable and unstable shear flows.

\section{Linear enhanced damping}

In this section, we prove Theorem \ref{thm-linearized} on the enhanced damping
for the linearized Navier-Stokes equation (\ref{eqn-bar-LNS}). We consider the
cases of rectangular and square tori separately.

\subsection{Linearized Navier-Stokes on a rectangular torus}

Consider the linearized equation (\ref{eqn-bar-LNS}) on a torus
\[
\mathbb{T}_{\alpha}=\left\{  0<y<2\pi,0<x<\frac{2\pi}{\alpha}\right\}
,\ \alpha>1.
\]
We divide the proof of Theorem \ref{thm-linearized} i) into several steps. In
the proof, we shall use $C$ to denote a generic constant in the estimates.
First, we prove the dissipation law (\ref{dissipation-law-NS}).

\begin{lemma}
\label{lemma-dissi-linear}Let $\omega\left(  t\right)  $ be a solution of
(\ref{eqn-bar-LNS}) with the initial data $\omega\left(  t\right)  \in
L^{2}\left(  \mathbb{T}_{\alpha}\right)  $. Then
\begin{equation}
\frac{d}{dt}\int_{\mathbb{T}_{\alpha}}(|\omega|^{2}-|\bigtriangledown\psi
|^{2})dxdy=-2\nu\int_{\mathbb{T}_{\alpha}}(|\bigtriangledown\omega
|^{2}-|\omega|^{2})dxdy, \label{dissipation-law-NS-lemma}%
\end{equation}
for any $t>0$.
\end{lemma}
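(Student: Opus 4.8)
The plan is to rewrite both quadratic forms in terms of the operators $J=-\sin y\,\partial_x$ and $L=1+\Delta^{-1}$ from (\ref{defn-J-L}) and to exploit that $J$ is anti-self-adjoint and $L$ self-adjoint on $L^2(\mathbb{T}_\alpha)$, with $\Delta^{-1}$ understood on the zero-mean subspace. This subspace is the natural one: since $\psi=(-\Delta)^{-1}\omega$ is used, $\omega$ must be zero-mean, and the mean is preserved by (\ref{eqn-bar-LNS}) because $\int_{\mathbb{T}_\alpha}\nu\Delta\omega=0$ and $\int_{\mathbb{T}_\alpha}\sin y\,\partial_x[(1+\Delta^{-1})\omega]=0$ by $x$-periodicity. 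Using $\psi=(-\Delta)^{-1}\omega$ and integrating by parts on the torus (no boundary terms), one has $\int_{\mathbb{T}_\alpha}|\nabla\psi|^2=\langle-\Delta\psi,\psi\rangle=\langle\omega,(-\Delta)^{-1}\omega\rangle=-\langle\Delta^{-1}\omega,\omega\rangle$, so that
\[
\int_{\mathbb{T}_\alpha}\bigl(|\omega|^2-|\nabla\psi|^2\bigr)\,dxdy=\langle\omega,\omega\rangle+\langle\Delta^{-1}\omega,\omega\rangle=\langle L\omega,\omega\rangle .
\]
Thus the left side of (\ref{dissipation-law-NS-lemma}) equals $\frac{d}{dt}\langle L\omega,\omega\rangle$.

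Next I would differentiate in time. For $t>0$ parabolic smoothing makes $\omega(t)$ smooth (in particular $\omega(t),L\omega(t)\in H^2$), so the time-differentiation and the integrations by parts below are legitimate; for $L^2$ initial data one first argues on $t>0$ and then uses that both sides of (\ref{dissipation-law-NS-lemma}) are continuous down to $t=0$. Writing (\ref{eqn-bar-LNS}) as $\partial_t\omega=\nu\Delta\omega+e^{-\nu t}JL\omega$ (using $\sin y\,\partial_x=-J$) and using self-adjointness of $L$,
\[
\frac{d}{dt}\langle L\omega,\omega\rangle=2\langle L\omega,\partial_t\omega\rangle=2\nu\langle L\omega,\Delta\omega\rangle+2e^{-\nu t}\langle L\omega,JL\omega\rangle .
\]

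The Euler term vanishes: $J=-\sin y\,\partial_x$ is anti-self-adjoint on $L^2(\mathbb{T}_\alpha)$ (integration by parts in $x$), hence $\langle f,Jf\rangle=0$ for every real $f$, and here $f=L\omega$ is real; this is precisely the Hamiltonian cancellation $A=JL$ with $J^*=-J$, i.e.\ the conservation of $\langle L\cdot,\cdot\rangle$ under the Euler flow $e^{tJL}$. For the viscous term, using that $\Delta$ and $\Delta^{-1}$ are self-adjoint and $\Delta\Delta^{-1}=I$ on the zero-mean subspace,
\[
\langle L\omega,\Delta\omega\rangle=\langle\omega,\Delta\omega\rangle+\langle\Delta^{-1}\omega,\Delta\omega\rangle=-\|\nabla\omega\|_{L^2}^2+\|\omega\|_{L^2}^2 ,
\]
so $\frac{d}{dt}\langle L\omega,\omega\rangle=-2\nu\bigl(\|\nabla\omega\|_{L^2}^2-\|\omega\|_{L^2}^2\bigr)$, which is (\ref{dissipation-law-NS-lemma}).

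There is no genuine analytic obstacle here; the content of the lemma is the algebraic cancellation of the Euler contribution together with the elementary identity $\langle L\omega,\Delta\omega\rangle=\|\omega\|^2-\|\nabla\omega\|^2$. The only points requiring care are bookkeeping: restricting to the zero-mean subspace so that $\Delta^{-1}$ (hence $L$ and $\psi$) is well defined and checking this subspace is invariant, and justifying the time-differentiation and the integrations by parts via standard parabolic regularity for $t>0$ plus continuity at $t=0$. The same computation, with $L$ replaced by $\langle L\cdot,\cdot\rangle$ restricted appropriately, will be reused on the square torus once one projects off $\ker L=\mathrm{span}\{\cos x,\sin x\}$.
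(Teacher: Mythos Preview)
Your proof is correct and follows essentially the same approach as the paper: rewrite the quadratic form as $\langle L\omega,\omega\rangle$, differentiate using $\partial_t\omega=\nu\Delta\omega+e^{-\nu t}JL\omega$, kill the Euler term via the anti-self-adjointness of $J$, and compute $\langle L\omega,\Delta\omega\rangle=-\|\nabla\omega\|^2+\|\omega\|^2$ by integration by parts. You add a bit more care about the zero-mean subspace and parabolic regularity than the paper does, but the argument is the same.
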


\begin{proof}
The equation (\ref{eqn-bar-LNS}) can be written as
\[
\partial_{t}\omega=\nu\Delta\omega+e^{-\nu t}JL\omega,
\]
where $J,L$ are defined in (\ref{defn-J-L}). Thus we have
\begin{align*}
\frac{d}{dt}\int_{\mathbb{T}_{\alpha}}(|\omega|^{2}-|\bigtriangledown\psi
|^{2})dxdy  &  =\frac{d}{dt}\left\langle L\omega,\omega\right\rangle
=2\left\langle L\omega,\omega_{t}\right\rangle \\
&  =e^{-\nu t}\left\langle L\omega,JL\omega\right\rangle +2\int_{\mathbb{T}%
_{\alpha}}\nu\Delta\omega\left(  \omega-\psi\right)  dxdy\\
&  =-2\nu\int_{\mathbb{T}_{\alpha}}(|\bigtriangledown\omega|^{2}-|\omega
|^{2})dxdy.
\end{align*}
In the last equality above, we use integration by parts and the fact that $J$
is anti-selfadjoint.
\end{proof}

In the next lemma, we show that the quadratic forms on both sides of
(\ref{dissipation-law-NS-lemma}) are positive definite for a non-shear
vorticity $\omega$.

\begin{lemma}
Let $\alpha>1$ and $\omega\in X\cap H^{1}\left(  \mathbb{T}_{\alpha}\right)
$. Then there exists a constant $c_{0}>0$ depending only on $\alpha$ such
that
\begin{equation}
\int_{\mathbb{T}_{\alpha}}(|\omega|^{2}-|\bigtriangledown\psi|^{2})dxdy\geq
c_{0}\left\Vert \omega\right\Vert _{L^{2}}^{2}, \label{positive-L2}%
\end{equation}
and
\begin{equation}
\int_{\mathbb{T}_{\alpha}}(|\bigtriangledown\omega|^{2}-|\omega|^{2})dxdy\geq
c_{0}\left\Vert \omega\right\Vert _{H^{1}}^{2}. \label{positive-H1}%
\end{equation}

\end{lemma}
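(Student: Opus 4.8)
The plan is to reduce both bounds to a one-dimensional spectral gap for $-\Delta$ restricted to the non-shear modes, using the Fourier basis of $\mathbb{T}_{\alpha}$. Write $\omega=\sum_{0\neq k\in\mathbf{Z},\,j\in\mathbf{Z}}c_{kj}e^{i(k\alpha x+jy)}$, where the sum runs only over $k\neq 0$ precisely because $\omega\in X$, and set $\lambda_{kj}:=k^{2}\alpha^{2}+j^{2}$. Then $\psi=(-\Delta)^{-1}\omega=\sum_{k\neq 0,\,j}\lambda_{kj}^{-1}c_{kj}e^{i(k\alpha x+jy)}$, and by Plancherel the three quantities $\|\omega\|_{L^{2}}^{2}$, $\int_{\mathbb{T}_{\alpha}}|\nabla\psi|^{2}\,dxdy=\langle\psi,\omega\rangle$, and $\|\nabla\omega\|_{L^{2}}^{2}=\langle-\Delta\omega,\omega\rangle$ equal $|\mathbb{T}_{\alpha}|$ times $\sum|c_{kj}|^{2}$, $\sum\lambda_{kj}^{-1}|c_{kj}|^{2}$, and $\sum\lambda_{kj}|c_{kj}|^{2}$ respectively; the common factor $|\mathbb{T}_{\alpha}|$ is irrelevant since it appears on both sides. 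Hence (\ref{positive-L2}) and (\ref{positive-H1}) amount to the termwise inequalities $1-\lambda_{kj}^{-1}\geq c_{0}$ and $\lambda_{kj}-1\geq c_{0}(1+\lambda_{kj})$ for all admissible $(k,j)$, for some $c_{0}=c_{0}(\alpha)>0$.

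Both follow from the single fact that $k\neq 0$ forces $\lambda_{kj}\geq\alpha^{2}$, combined with $\alpha>1$. For the first, $1-\lambda_{kj}^{-1}\geq 1-\alpha^{-2}>0$, so (\ref{positive-L2}) holds with $c_{0}=1-\alpha^{-2}$. For the second, $\lambda_{kj}\geq\alpha^{2}$ is exactly equivalent to $\lambda_{kj}-1\geq(1-\alpha^{-2})\lambda_{kj}$, while separately $\lambda_{kj}-1\geq\alpha^{2}-1\geq 1-\alpha^{-2}$ (the last step since $\alpha^{2}>1$); adding the two gives $2(\lambda_{kj}-1)\geq(1-\alpha^{-2})(1+\lambda_{kj})$, i.e. (\ref{positive-H1}) with $c_{0}=\tfrac12(1-\alpha^{-2})$. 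Taking $c_{0}=\tfrac12(1-\alpha^{-2})$ then validates both inequalities at once. Equivalently, one may argue without Fourier series: on $X$ one has $-\Delta\geq\alpha^{2}$ in the quadratic-form sense, whence $1+\Delta^{-1}\geq 1-\alpha^{-2}>0$ and $-\Delta-1\geq(1-\alpha^{-2})(-\Delta)\geq\tfrac12(1-\alpha^{-2})(1-\Delta)$, which are exactly the two claimed bounds.

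I do not expect a genuine obstacle here; the only point requiring care is that the constant $c_{0}$ be shown to depend on $\alpha$ alone, which it does. Note that $c_{0}$ degenerates as $\alpha\downarrow 1$: the modes $k=\pm 1$, $j=0$ — which span $W_{1}=\mathrm{span}\{\cos x,\sin x\}$ when $\alpha=1$ — have $\lambda_{kj}=1$ exactly, so neither inequality can hold on those modes for $\alpha=1$. This is precisely why the square-torus case is handled separately after projecting out $W_{1}$: the surviving non-shear modes then still satisfy $\lambda_{kj}\geq 2$, restoring a uniform gap.
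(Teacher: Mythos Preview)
Your proof is correct and follows essentially the same approach as the paper: both reduce the estimates to the spectral gap $-\Delta|_{X}\geq\alpha^{2}$ via Fourier decomposition. The only cosmetic difference is that the paper Fourier-expands in $x$ alone (writing $\omega=\sum_{k\neq0}\omega_{k}(y)e^{ik\alpha x}$ and using the operator inequality $(-d^{2}/dy^{2}+\alpha^{2}k^{2})^{-1}\leq\alpha^{-2}$), while you expand in both $x$ and $y$; the resulting constants differ slightly (the paper obtains $c_{0}=\min\{1,\alpha^{2}-1\}$ for the second bound), but this is immaterial.
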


\begin{proof}
For any
\[
\omega=\sum_{k\neq0}\omega_{k}\left(  y\right)  e^{ik\alpha x}\in H^{1}\left(
\mathbb{T}_{\alpha}\right)  ,
\]
we have
\begin{align*}
\int_{\mathbb{T}_{\alpha}}(|\omega|^{2}-|\bigtriangledown\psi|^{2})dxdy  &
=\sum_{0\neq k\in\mathbf{Z}}\left\langle \left(  1-\left(  -\frac{d^{2}%
}{dy^{2}}+\alpha^{2}k^{2}\right)  ^{-1}\right)  \omega_{k},\omega
_{k}\right\rangle \\
&  \geq\left(  1-\alpha^{-2}\right)  \sum_{0\neq k\in\mathbf{Z}}\left(
\omega_{k},\omega_{k}\right)  =\left(  1-\alpha^{-2}\right)  \left\Vert
\omega\right\Vert _{L^{2}}^{2},
\end{align*}
and
\begin{align*}
\int_{\mathbb{T}_{\alpha}}(|\bigtriangledown\omega|^{2}-|\omega|^{2})dxdy  &
=\sum_{0\neq k\in\mathbf{Z}}\left\langle \left(  -\frac{d^{2}}{dy^{2}}%
+\alpha^{2}k^{2}-1\right)  \omega_{k},\omega_{k}\right\rangle \\
&  \geq\sum_{0\neq k\in\mathbf{Z}}\int\left\vert \omega_{k}^{\prime}\left(
y\right)  \right\vert ^{2}dy+\left(  \alpha^{2}-1\right)  \int\left\vert
\omega_{k}\left(  y\right)  \right\vert ^{2}dy\\
&  \geq\min\left\{  1,\alpha^{2}-1\right\}  \left\Vert \omega\right\Vert
_{H^{1}}^{2}.
\end{align*}

\end{proof}

Next, we study the linearized Euler equation at the Kolmogorov flow
\begin{equation}
\partial_{t}\omega=-\sin y\partial_{x}\left(  1+\Delta^{-1}\right)
\omega=JL\omega. \label{eqn-linearized Euler}%
\end{equation}

\begin{lemma}
\label{lemma-Euler-growth}Let $\omega\left(  t\right)  $ be a solution of
(\ref{eqn-linearized Euler}) with $\omega\left(  0\right)  \in X\cap
H^{1}\left(  \mathbb{T}_{\alpha}\right)  $. Then
\[
\left\Vert \omega\left(  t\right)  \right\Vert _{H^{1}}\leq C\left(
1+t\right)  \left\Vert \omega\left(  0\right)  \right\Vert _{H^{1}},
\]
for some constant $C$.
\end{lemma}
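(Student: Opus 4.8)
The plan is to exploit the Hamiltonian decomposition $JL$ with $J=-\sin y\,\partial_x$, $L=1+\Delta^{-1}$ together with the conserved quadratic form $\langle L\omega,\omega\rangle = \int(|\omega|^2-|\nabla\psi|^2)$, which by \eqref{positive-L2} is equivalent to $\|\omega\|_{L^2}^2$ on $X$. Setting $\nu=0$ in the computation of Lemma \ref{lemma-dissi-linear} gives $\frac{d}{dt}\langle L\omega,\omega\rangle = \langle L\omega,JL\omega\rangle = 0$ since $J$ is anti-selfadjoint; hence $\|\omega(t)\|_{L^2}\le C\|\omega(0)\|_{L^2}$ for all $t\ge 0$. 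This disposes of the $L^2$ part of the $H^1$ norm, so the whole problem reduces to controlling $\|\partial_y\omega(t)\|_{L^2}$ (the $x$-derivative is controlled by $L^2$ bounds on each Fourier mode, or simply folded into the estimate below since $\partial_x$ commutes with the evolution).

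Next I would estimate $\frac{d}{dt}\|\partial_y\omega\|_{L^2}^2$ directly. Differentiating \eqref{eqn-linearized Euler} in $y$ gives $\partial_t(\partial_y\omega) = JL\,\partial_y\omega + [\partial_y, JL]\omega$, and the commutator is $[\partial_y, J]L\omega + J[\partial_y,L]\omega$. Since $L=1+\Delta^{-1}$ commutes with $\partial_y$ (constant coefficients), only $[\partial_y,J] = [\partial_y, -\sin y\,\partial_x] = -\cos y\,\partial_x$ survives, giving $\partial_t(\partial_y\omega) = JL(\partial_y\omega) - (\cos y)\,\partial_x L\omega$. Taking the inner product with $\partial_y\omega$, the term $\langle JL\partial_y\omega,\partial_y\omega\rangle$ is \emph{not} antisymmetric in the plain $L^2$ pairing, but we can instead pair against $L\partial_y\omega$ to use the conserved form: $\frac{d}{dt}\langle L\partial_y\omega,\partial_y\omega\rangle = 2\langle L\partial_y\omega, \partial_t\partial_y\omega\rangle = 2\langle L\partial_y\omega, JL\partial_y\omega\rangle - 2\langle L\partial_y\omega, (\cos y)\partial_x L\omega\rangle = -2\langle L\partial_y\omega,(\cos y)\partial_x L\omega\rangle$. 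The point is that $\langle L\partial_y\omega,\partial_y\omega\rangle$ is comparable to $\|\partial_y\omega\|_{L^2}^2$ on $X$ (again by \eqref{positive-L2} applied to $\partial_y\omega$, which lies in $X$ since $\partial_y$ preserves nonzero $x$-frequency), and the forcing term is bounded by $C\|L\partial_y\omega\|_{L^2}\,\|L\omega\|_{L^2}\le C\|\partial_y\omega\|_{L^2}\|\omega(0)\|_{L^2}$ using $\|L\|_{\mathcal L(L^2)}\le 1$ on $X$ and the already-established $L^2$ bound on $\omega(t)$. Thus $\frac{d}{dt}\|\partial_y\omega\|_{L^2}^2 \le C\|\partial_y\omega\|_{L^2}\,\|\omega(0)\|_{L^2}$, which integrates (via $\frac{d}{dt}\|\partial_y\omega\|_{L^2}\le C\|\omega(0)\|_{L^2}$, a Grönwall-free linear-growth estimate) to $\|\partial_y\omega(t)\|_{L^2}\le \|\partial_y\omega(0)\|_{L^2} + Ct\,\|\omega(0)\|_{L^2}\le C(1+t)\|\omega(0)\|_{H^1}$. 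Combining with the $L^2$ bound yields $\|\omega(t)\|_{H^1}\le C(1+t)\|\omega(0)\|_{H^1}$.

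The main technical point to watch is the loss of derivatives in the commutator: $[\partial_y,J]L\omega$ involves $\partial_x L\omega$, which is fine because $L=1+\Delta^{-1}$ is order zero, so $\partial_x L\omega\in L^2$ with norm controlled by $\|\omega\|_{L^2}$ — no actual derivative is lost, and the forcing is genuinely lower order than $\partial_y\omega$. The only subtlety is justifying the differentiation under the evolution and the integrations by parts, which is handled by first working with smooth data and then passing to the limit; since the estimate is linear, an approximation argument extends it to all $\omega(0)\in X\cap H^1$. A secondary point is that one should carry along the $\partial_x\omega$ component as well, but because $\partial_x$ commutes with $JL$ entirely, the same argument (in fact without any commutator term) gives $\|\partial_x\omega(t)\|_{L^2}\le C\|\partial_x\omega(0)\|_{L^2}$, so it is actually the $\partial_y$ direction alone that produces the linear-in-$t$ growth. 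I expect the estimate of the cross term $\langle L\partial_y\omega,(\cos y)\partial_x L\omega\rangle$ and the verification that $\langle L\,\cdot,\cdot\rangle$ is an equivalent norm on the relevant subspace to be the only places requiring care; everything else is bookkeeping with the Hamiltonian structure already set up in the preceding lemmas.
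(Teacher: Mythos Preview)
Your approach is essentially the paper's: both differentiate the equation in $x$ and $y$, observe that $\partial_x$ commutes with $JL$ so $\|\partial_x\omega(t)\|_{L^2}\le C\|\partial_x\omega(0)\|_{L^2}$, and for $\partial_y$ pick up the commutator $-\cos y\,\partial_x L\omega$. The paper then applies Duhamel and the $L^2$-boundedness of $e^{tJL}$, whereas you run an energy estimate in the $\langle L\cdot,\cdot\rangle$ form; these are the differential and integral versions of the same computation.

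There is one slip worth flagging. You bound the forcing by $C\|L\partial_y\omega\|_{L^2}\|L\omega\|_{L^2}$ and then assert that $\partial_x L\omega$ is controlled by $\|\omega\|_{L^2}$ ``because $L$ is order zero.'' That is not right: $\partial_x L=L\partial_x$ is genuinely first order, and $\|\partial_x L\omega\|_{L^2}$ is bounded by $\|\partial_x\omega\|_{L^2}$, not by $\|\omega\|_{L^2}$. The repair is immediate using the $\partial_x$-bound you already have: $\|\partial_x L\omega(t)\|_{L^2}=\|L\partial_x\omega(t)\|_{L^2}\le\|\partial_x\omega(t)\|_{L^2}\le C\|\partial_x\omega(0)\|_{L^2}$, so the forcing is controlled by $C\|\partial_y\omega\|_{L^2}\|\omega(0)\|_{H^1}$ rather than $\|\omega(0)\|_{L^2}$, and your linear-in-$t$ integration goes through unchanged. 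This is exactly what the paper uses in its Duhamel estimate, bounding the integrand by $\|\partial_x(1+\Delta^{-1})\omega(s)\|_{L^2}\le C\|\nabla\omega(0)\|_{L^2}$.
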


\begin{proof}
First, we note that
\[
\left\langle L\omega,\omega\right\rangle =\int_{\mathbb{T}_{\alpha}}%
(|\omega|^{2}-|\bigtriangledown\psi|^{2})dxdy
\]
is conserved for (\ref{eqn-bar-LNS}). Therefore by the positivity estimate
(\ref{positive-L2}), we have
\begin{equation}
\left\Vert \omega\left(  t\right)  \right\Vert _{L^{2}}=\left\Vert
e^{tJL}\omega\left(  0\right)  \right\Vert _{L^{2}}\leq C\left\Vert
\omega\left(  0\right)  \right\Vert _{L^{2}},\ \label{energy-bound-L2}%
\end{equation}
for some constant $C$. Taking $\partial_{x}$ of (\ref{eqn-linearized Euler}),
we have
\[
\partial_{t}\partial_{x}\omega=-\sin y\partial_{x}\left(  1+\Delta
^{-1}\right)  \partial_{x}\omega
\]
and therefore
\[
\left\Vert \partial_{x}\omega\left(  t\right)  \right\Vert _{L^{2}}=\left\Vert
e^{tJL}\partial_{x}\omega\left(  0\right)  \right\Vert _{L^{2}}\leq
C\left\Vert \partial_{x}\omega\left(  0\right)  \right\Vert _{L^{2}}.
\]
Taking $\partial_{y}$ of (\ref{eqn-linearized Euler}), we have
\[
\partial_{t}\partial_{y}\omega=-\sin y\partial_{x}\left(  1+\Delta
^{-1}\right)  \partial_{y}\omega-\cos y\partial_{x}\left(  1+\Delta
^{-1}\right)  \omega,
\]
and
\[
\partial_{y}\omega\left(  t\right)  =e^{tJL}\partial_{y}\omega\left(
0\right)  -\int_{0}^{t}e^{\left(  t-s\right)  JL}\cos y\partial_{x}\left(
1+\Delta^{-1}\right)  \omega\left(  s\right)  ds.
\]
Therefore
\begin{align*}
\left\Vert \partial_{y}\omega\left(  t\right)  \right\Vert _{L^{2}}  &  \leq
C\left(  \left\Vert \partial_{y}\omega\left(  0\right)  \right\Vert _{L^{2}%
}+\int_{0}^{t}\left\Vert \partial_{x}\left(  1+\Delta^{-1}\right)
\omega\left(  s\right)  \right\Vert _{L^{2}}ds\right) \\
&  \leq C\left(  1+t\right)  \left\Vert \nabla\omega\left(  0\right)
\right\Vert _{L^{2}}.
\end{align*}
This finishes the proof of the lemma.
\end{proof}

In the next lemma, we study the spectral properties of the linearized Euler
operator $A=JL$.

\begin{lemma}
\label{lemma-spectra-rectangular}i) The operator $A:X^{\ast}\rightarrow X$ is
anti-selfadjoint in the inner product $\left[  \cdot,\cdot\right]
=\left\langle L\cdot,\cdot\right\rangle .$

ii) The spectrum of $A$ lies on the imaginary axis and is purely continuous.
\end{lemma}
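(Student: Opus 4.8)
The plan is to prove (i) by exhibiting $A=JL$ as a genuinely skew-adjoint operator in the equivalent Hilbert space $\mathcal{X}:=(X,[\cdot,\cdot])$, and then to deduce (ii) from the spectral theorem for skew-adjoint operators together with a mode-by-mode analysis of the eigenvalue equation. For (i), I would first record (from the computation in the previous lemma) that for $\alpha>1$ one has $(1-\alpha^{-2})I\le L\le I$ on $X$, so $[\cdot,\cdot]=\langle L\cdot,\cdot\rangle$ is an inner product on $X$ with norm equivalent to the $L^2$ norm; thus $\mathcal{X}$ is a Hilbert space (and $X^\ast$ may be identified with $X$ through the $L^2$ pairing, which is legitimate since $L$ is bounded and boundedly invertible on $X$). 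It then suffices to show that $A$, with domain $D(A)=L^{-1}D(J)$ where $D(J)=\{\omega\in X:\ \sin y\,\partial_x\omega\in L^2\}$, is skew-adjoint in $\mathcal{X}$. Density of $D(A)$ is immediate since $L^{-1}$ is a bounded bijection of $X$ and $D(J)$ is dense. Skew-symmetry is the formal identity already used for the dissipation law: for $u,v\in D(A)$, using that $L$ is self-adjoint and $J$ anti-self-adjoint on $L^2$ and that $Lu,Lv\in D(J)$, one gets $[Au,v]=\langle LJLu,v\rangle=\langle JLu,Lv\rangle=-\langle Lu,JLv\rangle=-[u,Av]$.

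To upgrade skew-symmetry to skew-adjointness I would check that $A\pm I:D(A)\to X$ is onto; together with skew-symmetry this forces $iA$ to be self-adjoint. Factor $A\pm I=(J\pm L^{-1})L$; since $L$ is a bounded bijection of $X$ it suffices that $J\pm L^{-1}:D(J)\to X$ be onto. Here $L^{-1}$ is bounded, self-adjoint and $\ge I$ (because $L\le I$), while $J$ is anti-self-adjoint, so $\operatorname{Re}\langle(J\pm L^{-1})u,u\rangle=\pm\langle L^{-1}u,u\rangle$ has absolute value $\ge\|u\|_{L^2}^2$; hence $\|(J\pm L^{-1})u\|_{L^2}\ge\|u\|_{L^2}$, the range is closed, and the same estimate applied to the adjoint $-J\pm L^{-1}$ shows the range is dense. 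Thus $J\pm L^{-1}$, and therefore $A\pm I$, is onto, so $A$ is anti-self-adjoint in $\mathcal{X}$. (Alternatively this is an instance of the general Hamiltonian-PDE theory of \cite{lin-zeng-hamiltonian}: when $L$ is bounded with $L\ge\delta>0$ and $J$ is anti-self-adjoint, $JL$ generates a $C_0$-group that is unitary for $[\cdot,\cdot]$.)

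For (ii), the inclusion $\sigma(A)\subset i\mathbb{R}$ and the absence of residual spectrum are immediate from (i), since $iA$ is self-adjoint; hence "purely continuous'' reduces to showing $A$ has no eigenvalue. Suppose $JL\omega=ic_0\omega$ with $0\ne\omega=\sum_{k\ne0}\omega_k(y)e^{ik\alpha x}\in X$ and $c_0\in\mathbb{R}$. The $k$-th Fourier mode satisfies $-ik\alpha\sin y\,(L_k\omega_k)=ic_0\omega_k$, where $L_k=1-(-\partial_y^2+\alpha^2k^2)^{-1}>0$. If $c_0=0$ this gives $\sin y\,L_k\omega_k\equiv0$, hence $L_k\omega_k=0$ and $\omega_k=0$ for every $k$, a contradiction; so $c_0\ne0$. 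Fix $k$ with $\omega_k\ne0$ and set $c=-c_0/(k\alpha)\in\mathbb{R}\setminus\{0\}$ and $\hat\psi_k=(-\partial_y^2+\alpha^2k^2)^{-1}\omega_k\in H^2$, so that $\omega_k=-\hat\psi_k''+\alpha^2k^2\hat\psi_k$ and the equation reads $(\sin y-c)\omega_k=\sin y\,\hat\psi_k$ (note $\hat\psi_k\ne0$ since $\omega_k\ne0$). When $|c|>1$ the coefficient $\sin y-c$ never vanishes, so $\omega_k=\tfrac{\sin y}{\sin y-c}\hat\psi_k$ and $\hat\psi_k$ solves the Rayleigh equation $-\hat\psi_k''+\alpha^2k^2\hat\psi_k=\tfrac{\sin y}{\sin y-c}\hat\psi_k$; pairing with $\bar{\hat\psi}_k$ in $L^2(0,2\pi)$ and using $\tfrac{\sin y}{\sin y-c}\le\tfrac{1}{1+|c|}<\tfrac12<\alpha^2\le\alpha^2k^2$ forces $\hat\psi_k\equiv0$, a contradiction.

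The remaining, and main, difficulty is the embedded case $|c|\le1$, where $\sin y-c$ vanishes and one must rule out singular neutral modes of the Rayleigh equation at effective wavenumber $|k|\alpha\ge\alpha>1$. Here I would use that the Kolmogorov profile $U=\sin y$ lies in class $\mathcal{K}^+$ — indeed $U''=-U$, so $-U''/(U-U_s)\equiv1>0$ with inflection value $U_s=0$ — together with $\alpha>1=\alpha_{\max}$, and invoke (or reprove along classical Tollmien–Lin lines, cf. \cite{lin-zeng-Euler-mfld}, \cite{lin-shear}) the fact that such flows admit no neutral mode for $|k|\alpha>\alpha_{\max}$: from $(\sin y-c)\omega_k=\sin y\,\hat\psi_k$ a critical point $y_c$ with $\sin y_c=c$ forces $c\,\hat\psi_k(y_c)=0$, i.e. $\hat\psi_k(y_c)=0$ since $c\ne0$, and one then derives a contradiction from the resulting (now regular) Rayleigh problem together with the positivity built into class $\mathcal{K}^+$. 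This is the step where the sharp threshold $\alpha\ge1$ enters and is the technical heart of the lemma; once it is in place, $A$ has no eigenvalue and (ii) follows.
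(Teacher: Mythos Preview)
Your treatment of (i) is correct and in fact more careful than the paper's: the paper only records the skew-symmetry identity $[Au,v]=-[u,Av]$ and calls $A$ anti-self-adjoint, whereas you actually verify surjectivity of $A\pm I$ via the coercivity of $J\pm L^{-1}$. Either way, (i) gives $\sigma(A)\subset i\mathbb{R}$.

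For (ii) your setup (Fourier in $x$, reduce to the Rayleigh equation for $\hat\psi_k\in H^2$) matches the paper, but your three-case split is more work than needed and leaves the decisive case incomplete. You dispose of $c_0=0$ and $|c|>1$ directly, but for the embedded range $0<|c|\le 1$ you only argue that $\hat\psi_k$ must vanish at the critical layers and then appeal vaguely to ``the positivity built into class $\mathcal{K}^+$'' and the literature. That last step is exactly the substantive one; as written it is not a proof. (Note also that the Rayleigh potential $\sin y/(\sin y-c)$ is still singular at the critical points, so calling the problem ``now regular'' is misleading; what one really uses is a sign/Sturm argument on the intervals between critical points, as in Lemma~3.5 of \cite{lin-shear}.)

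The paper avoids the case split entirely. It invokes Lemma~\ref{lemma-inflection-value} (Appendix): for \emph{any} $c\in\mathbb{R}$, an $H^2$ neutral solution of the Rayleigh equation forces $c$ to be an inflection value of $U$. Since $\sin y$ has $0$ as its unique inflection value, this immediately gives $c=0$, and the Rayleigh equation collapses to $(-d^2/dy^2+\alpha^2k^2-1)\psi_k=0$, which has no nonzero periodic solution when $\alpha>1$. So your $|c|>1$ computation is unnecessary, and your unfinished embedded case is subsumed by that single lemma; I would replace the case analysis by a direct appeal to Lemma~\ref{lemma-inflection-value}.
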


\begin{proof}
i) By the positivity of $L$ on $X$, $\left[  \cdot,\cdot\right]  =\left\langle
L\cdot,\cdot\right\rangle $ defines an equivalent inner product to the $L^{2}$
inner product. For any $\omega_{1},\omega_{2}\in X$, we have
\begin{align*}
\left[  A\omega_{1},\omega_{2}\right]   &  =\left\langle LJL\omega_{1}%
,\omega_{2}\right\rangle =\left\langle JL\omega_{1},L\omega_{2}\right\rangle
=-\left\langle L\omega_{1},JL\omega_{2}\right\rangle \\
&  =-\left[  \omega_{1},A\omega_{2}\right]  ,
\end{align*}
and thus $A$ is anti-selfadjoint on $\left(  X,\left[  \cdot,\cdot\right]
\right)  $.

ii) By property i), the spectrum of $A$ in $L^{2}$ is on the imaginary axis.
Since $A=-\sin y\partial_{x}\left(  1+\Delta^{-1}\right)  $ is a compact
perturbation of $D=-\sin y\partial_{x}$, by Weyl's Theorem the continuous
spectrum of $A$ is the same as that of $D$, which is clearly the whole
imaginary axis. It remains to show that there is no embedded eigenvalue of $A$
on the imaginary axis. Suppose $A\omega=\lambda\omega$, where $\lambda\in
i\mathbf{R}$ and $0\neq\omega\in X$. Let
\[
\omega=\sum_{0\neq k\in\mathbf{Z}}\omega_{k}\left(  y\right)  e^{ik\alpha
x},\ \ \ \psi_{k}=\left(  -\frac{d^{2}}{dy^{2}}+\alpha^{2}k^{2}\right)
^{-1}\omega_{k}.
\]
Then if $\omega_{k}\neq0$, we have
\[
ik\alpha\sin y\left(  \omega_{k}-\psi_{k}\right)  =\lambda\omega_{k},
\]
which is equivalent to the Rayleigh equation
\[
\left(  -\frac{d^{2}}{dy^{2}}+\alpha^{2}k^{2}-\frac{\sin y}{\sin y-c}\right)
\psi_{k}=0,
\]
with $c=\frac{\lambda}{ik\alpha}\in\mathbf{R}$. Since $\omega_{k}\in L^{2}$
implies $\psi_{k}\in H^{2}$, by Lemma \ref{lemma-inflection-value} in the
Appendix, we must have $c=0$ which is the only inflection value of $\sin y$.
Thus
\[
\left(  -\frac{d^{2}}{dy^{2}}+\alpha^{2}k^{2}-1\right)  \psi_{k}=0,
\]
which implies that $\psi_{k}=0$ since $\alpha>1$. This contradiction rules out
the embedded imaginary eigenvalues of $A$ in $X$.
\end{proof}

By the above Lemma and the RAGE theorem, we have

\begin{corollary}
Let $B$ be any compact operator in $L^{2}\left(  \mathbb{T}_{\alpha}\right)
$. Then
\[
\frac{1}{T}\int_{0}^{T}\left\Vert B\omega\left(  t\right)  \right\Vert
_{L^{2}}^{2}dt\rightarrow0,\ \text{when }T\rightarrow\infty,
\]
for any solution $\omega\left(  t\right)  \ $of (\ref{eqn-linearized Euler})
with $\omega\left(  0\right)  \in X$.
\end{corollary}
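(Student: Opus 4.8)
The plan is to read the statement as a direct corollary of the RAGE theorem applied to the group $e^{tA}$ on the Hilbert space $H:=(X,[\cdot,\cdot])$ with $[\cdot,\cdot]=\langle L\cdot,\cdot\rangle$.

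First I would record the group-theoretic setup. Since $J=-\sin y\,\partial_x$ and $L=1+\Delta^{-1}$ both map $X$ into $X$, so does $A=JL$, and by Lemma~\ref{lemma-spectra-rectangular}(i) $A$ is anti-selfadjoint on $H$; hence $-iA$ is selfadjoint on $H$, Stone's theorem produces a unitary $C_{0}$-group $e^{tA}$ on $H$, and the solution of (\ref{eqn-linearized Euler}) with $\omega(0)\in X$ is $\omega(t)=e^{tA}\omega(0)$. By Lemma~\ref{lemma-spectra-rectangular}(ii) the spectrum of $A$ is purely continuous, so the RAGE projection $P_{c}$ onto the continuous subspace of $-iA$ equals the identity on $H$. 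By the positivity estimate (\ref{positive-L2}), $[\cdot,\cdot]^{1/2}$ is equivalent to $\|\cdot\|_{L^{2}}$ on $X$, and in particular $L|_{X}\colon X\to X$ has a bounded inverse.

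Next I would rewrite $\|B\omega(t)\|_{L^{2}}$ as the $H$-norm of a fixed compact operator applied to $\omega(t)$. Writing $P_{X}$ for the $L^{2}$-orthogonal projection onto $X$ and $B^{\ast}$ for the $L^{2}$-adjoint of $B$, and using $\omega(t)\in X$, we get
\[
\|B\omega(t)\|_{L^{2}}^{2}=\langle B^{\ast}B\,\omega(t),\omega(t)\rangle=\langle S\,\omega(t),\omega(t)\rangle,\qquad S:=P_{X}B^{\ast}B|_{X},
\]
where $S\colon X\to X$ is compact and selfadjoint with respect to the $L^{2}$ inner product. Since $\langle f,g\rangle=[L^{-1}f,g]$ for all $f,g\in X$, the operator $R:=L^{-1}S\colon X\to X$ is compact, selfadjoint and nonnegative on $H$, with $\|B\omega(t)\|_{L^{2}}^{2}=[R\,\omega(t),\omega(t)]=\|R^{1/2}e^{tA}\omega(0)\|_{H}^{2}$ and $R^{1/2}$ compact on $H$.

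Finally, applying the RAGE theorem on $H$ with the compact operator $R^{1/2}$ and $P_{c}=I$ yields $\frac{1}{T}\int_{0}^{T}\|R^{1/2}e^{tA}\omega(0)\|_{H}^{2}\,dt\to0$ as $T\to\infty$, which is precisely the assertion. I expect no real obstacle: all the content is already in Lemma~\ref{lemma-spectra-rectangular}, in particular the absence of embedded eigenvalues that forces $P_{c}=I$. The one point needing care is the mismatch between the $L^{2}$ norm in the statement and the $[\cdot,\cdot]$ norm in which RAGE is phrased, which is resolved by the operator $R$ above; equivalently, one may note that $B|_{X}\colon H\to L^{2}$ is compact (norm equivalence plus compactness of $B$ on $L^{2}$) and invoke the fact that the RAGE conclusion holds for any compact operator from $H$ into an arbitrary Banach space, by the usual finite-rank approximation together with Wiener's lemma applied to the atomless spectral measure of $-iA$.
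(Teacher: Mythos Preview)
Your proposal is correct and takes essentially the same approach as the paper, which simply writes ``By the above Lemma and the RAGE theorem'' without further details. Your careful treatment of the norm mismatch---replacing $B$ by the compact $H$-selfadjoint operator $R=L^{-1}P_{X}B^{\ast}B|_{X}$ so that $\|B\omega(t)\|_{L^{2}}^{2}=\|R^{1/2}e^{tA}\omega(0)\|_{H}^{2}$---is exactly the kind of bookkeeping the paper leaves implicit, and it is sound.
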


For the proof of the enhanced damping, we need a more quantitative version of
RAGE\ theorem. Let $\alpha^{2}\leq\lambda_{1}\leq\lambda_{2}\leq\ldots$ be the
eigenvalues of the operator $-\bigtriangleup$ on $X\ $and $e_{1},e_{2}\ldots$
be the corresponding orthonormal eigenvectors. Let $P_{N}$ denote the
orthogonal projection on the subspace spanned by the first $N$ eigenvectors
$e_{1},e_{2},\ldots,e_{N}$ and $S=\{\omega\in X:\Vert\omega\Vert_{L^{2}}=1\}$
be the unit sphere in $X$. Denote the norms
\[
\left\Vert \omega\right\Vert _{X}^{2}=\left\langle \left(  1+\Delta
^{-1}\right)  \omega,\omega\right\rangle =\int_{\mathbb{T}_{\alpha}}%
(|\omega|^{2}-|\bigtriangledown\psi|^{2})dxdy,
\]%
\[
\left\Vert \omega\right\Vert _{X^{1}}^{2}=\left\langle \left(  -\Delta
-1\right)  \omega,\omega\right\rangle =\int_{\mathbb{T}_{\alpha}%
}(|\bigtriangledown\omega|^{2}-|\omega|^{2})dxdy
\]
which are equivalent to $L^{2}$ and $H^{1}\ $norms on $X.$ Let $\omega
=\sum_{k\geq1}c_{k}e_{k}$, then
\[
\left\Vert \omega\right\Vert _{X}^{2}=\sum_{k\geq1}\left(  1-\frac{1}%
{\lambda_{k}}\right)  \left\vert c_{k}\right\vert ^{2},\ \
\]
and
\[
\left\Vert \omega\right\Vert _{X^{1}}^{2}=\sum_{k\geq1}\left(  \lambda
_{k}-1\right)  \left\vert c_{k}\right\vert ^{2}.
\]

The following version of the RAGE theorem is obtained as in
\cite{kiselev-annals-mixing}.

\begin{lemma}
\label{lemma-rage-quant}Let $K\subset S$ be a compact set and $J,L$ are
defined in (\ref{defn-J-L}). For any $N,\kappa>0$, there exists $T_{c}%
(N,\kappa,K)$ such that for all $T\geq T_{c}$ and any $\omega\left(  0\right)
\in K$,
\[
\frac{1}{T}\int_{0}^{T}\Vert P_{N}e^{tJL}\omega\left(  0\right)  \Vert_{X}%
^{2}dt\leq\kappa\Vert\omega\left(  0\right)  \Vert_{X}^{2}.
\]

\end{lemma}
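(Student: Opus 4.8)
The strategy follows \cite{kiselev-annals-mixing}: the RAGE theorem gives, for each \emph{fixed} initial datum, that the Ces\`aro average of $\Vert P_N e^{tJL}\omega(0)\Vert_X^2$ tends to $0$, and one then upgrades this to a bound that is uniform over the compact set $K$ by an equicontinuity plus finite-net argument.

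By Lemma \ref{lemma-spectra-rectangular}, $A=JL$ is anti-selfadjoint on the Hilbert space $\left(X,\left[\cdot,\cdot\right]\right)$ with $\left[\cdot,\cdot\right]=\left\langle L\cdot,\cdot\right\rangle$, so $e^{tJL}$ is a unitary group there; in particular $\Vert e^{tJL}\omega\Vert_X=\Vert\omega\Vert_X$ for all $t\in\mathbf{R}$, and since $\Vert\cdot\Vert_X$ and the $L^2$ norm are equivalent on $X$ (for $\alpha>1$, by (\ref{positive-L2})) this reproves (\ref{energy-bound-L2}). Because the spectrum of $A$ is purely continuous, applying the RAGE theorem with the finite-rank, hence compact, operator $P_N$ yields, for every $\omega(0)\in X$,
\[
Q_T\!\left(\omega(0)\right):=\frac{1}{T}\int_0^T\Vert P_N e^{tJL}\omega(0)\Vert_X^2\,dt\longrightarrow 0\qquad(T\to\infty).
\]

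Next, I claim the family $\{Q_T\}_{T\ge 1}$ is equi-Lipschitz on bounded subsets of $X$. Indeed $Q_T(\omega)=B_T(\omega,\omega)$, where $B_T(\omega,\omega')=\frac{1}{T}\int_0^T\left\langle LP_N e^{tJL}\omega,\,P_N e^{tJL}\omega'\right\rangle dt$ is a bounded Hermitian form; using $\Vert e^{tJL}\cdot\Vert_X=\Vert\cdot\Vert_X$, the boundedness of $P_N$ and of $L=1+\Delta^{-1}$, and the norm equivalence, we get $|B_T(\omega,\omega')|\le C\Vert\omega\Vert_X\Vert\omega'\Vert_X$ with $C$ independent of $T$. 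Polarization then gives
\[
|Q_T(\omega)-Q_T(\omega')|\le C\left(\Vert\omega\Vert_X+\Vert\omega'\Vert_X\right)\Vert\omega-\omega'\Vert_X\qquad\text{for all }T\ge 1.
\]

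Finally, the compactness argument. Fix $\kappa>0$. On $S$ one has $\Vert\omega\Vert_X^2\ge c_0>0$ by (\ref{positive-L2}), so it suffices to find $T_c$ with $\sup_{\omega\in K}Q_T(\omega)\le\kappa c_0$ for all $T\ge T_c$, since then $Q_T(\omega)\le\kappa c_0\le\kappa\Vert\omega\Vert_X^2$ on $K$. Pick a finite $\eta$-net $\omega^{(1)},\dots,\omega^{(m)}$ of the compact set $K$, with $\eta>0$ to be chosen. By the pointwise convergence above there is $T_c=T_c(N,\kappa,K)$ such that $Q_T(\omega^{(j)})\le\kappa c_0/2$ for $j=1,\dots,m$ and all $T\ge T_c$. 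Given any $\omega\in K$, choose $j$ with $\Vert\omega-\omega^{(j)}\Vert_X\le\eta$; since $\Vert\omega\Vert_X,\Vert\omega^{(j)}\Vert_X$ are bounded by a constant $C_K$ depending only on $K$, the equi-Lipschitz bound gives $Q_T(\omega)\le Q_T(\omega^{(j)})+2CC_K\eta\le\kappa c_0/2+2CC_K\eta$, and choosing $\eta=\kappa c_0/(4CC_K)$ finishes the proof. The one point needing care is the $T$-uniformity of the equicontinuity estimate — because $Q_T$ is a Ces\`aro average it is not monotone in $T$, so Dini's theorem is unavailable — and this uniformity rests precisely on the unitarity of $e^{tJL}$ on $\left(X,\left[\cdot,\cdot\right]\right)$ established in Lemma \ref{lemma-spectra-rectangular}.
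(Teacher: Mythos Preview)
Your proof is correct and follows essentially the same approach as the paper, which does not give its own argument but simply cites \cite{kiselev-annals-mixing}; your write-up supplies precisely the details of that reference, namely pointwise RAGE convergence for the anti-selfadjoint operator $JL$ on $(X,[\cdot,\cdot])$ combined with an equicontinuity/finite-net argument to upgrade to uniformity over the compact set $K$. The only cosmetic point is that your $\eta$-net should be taken with respect to a norm equivalent to $\Vert\cdot\Vert_X$ (which it is, since $K\subset S$ is compact in $L^2$ and the two norms are equivalent on $X$), so no issue arises.
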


Now we estimate the difference of the solutions of the linearized NS and Euler equations.

\begin{lemma}
\label{lemma-difference-NS-Euler}Let $\omega^{\nu},\omega^{0}\ $be the
solutions of the linearized NS equation (\ref{eqn-bar-LNS}) and Euler equation
(\ref{eqn-linearized Euler}) with the initial data in $X\cap H^{1}$. Then
there exists some constant $C_{0}>0$ such that
\begin{equation}
\frac{d}{dt}\left\Vert \omega^{\nu}-\omega^{0}\right\Vert _{X}^{2}\leq
C_{0}\nu\left(  1+t^{2}\right)  \left\Vert \omega^{0}\left(  t\right)
\right\Vert _{H^{1}}^{2}, \label{error-ineq-1}%
\end{equation}
for all $t\in\left(  0,+\infty\right)  $.
\end{lemma}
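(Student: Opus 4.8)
The plan is to control the evolution of $\|\omega^{\nu}-\omega^{0}\|_{X}^{2}=\langle L(\omega^{\nu}-\omega^{0}),\omega^{\nu}-\omega^{0}\rangle$, where $L=1+\Delta^{-1}$ as in \eqref{defn-J-L}. Set $w=\omega^{\nu}-\omega^{0}$. Subtracting the two equations, written as $\partial_{t}\omega^{\nu}=\nu\Delta\omega^{\nu}+e^{-\nu t}JL\omega^{\nu}$ and $\partial_{t}\omega^{0}=JL\omega^{0}$, and regrouping gives
\[
\partial_{t}w=\nu\Delta w+\nu\Delta\omega^{0}+e^{-\nu t}JLw+(e^{-\nu t}-1)JL\omega^{0}.
\]
Since $L$ is self-adjoint, $\tfrac{d}{dt}\|w\|_{X}^{2}=2\langle Lw,\partial_{t}w\rangle$ splits into four terms, one per summand above. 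First I would dispose of the two structural terms. By anti-self-adjointness of $J=-\sin y\,\partial_{x}$ (exactly as in the proof of Lemma \ref{lemma-dissi-linear}), $\langle Lw,JLw\rangle=0$. For $2\nu\langle Lw,\Delta w\rangle$, integrating by parts and using $L=1+\Delta^{-1}$ produces $-2\nu(\|\nabla w\|_{L^{2}}^{2}-\|w\|_{L^{2}}^{2})=-2\nu\|w\|_{X^{1}}^{2}\le 0$; crucially this identity uses only $w\in H^{1}$, and this negative dissipation term is exactly what will absorb the two error terms.

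It remains to estimate the two error terms, the key point being that the relevant operators must be treated by distributing derivatives rather than pairing a Laplacian against $Lw$. For $2\nu\langle Lw,\Delta\omega^{0}\rangle$ I write it as $-2\nu\langle\nabla Lw,\nabla\omega^{0}\rangle$; since $\nabla L=\nabla+\nabla\Delta^{-1}$ maps $X\cap H^{1}$ boundedly into $L^{2}$ (the multiplier of $\nabla\Delta^{-1}$ has modulus $\le|\xi|^{-1}\le\alpha^{-1}$ on $X$), and since $\|\cdot\|_{X^{1}}$ is equivalent to $\|\cdot\|_{H^{1}}$ on $X$ by \eqref{positive-H1} (valid for $\alpha>1$), Cauchy--Schwarz yields $|2\nu\langle Lw,\Delta\omega^{0}\rangle|\le C\nu\,\|w\|_{X^{1}}\|\omega^{0}\|_{H^{1}}$. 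For $2(e^{-\nu t}-1)\langle Lw,JL\omega^{0}\rangle$ I use $|e^{-\nu t}-1|\le\nu t$ for $t\ge 0$, together with $\|Lw\|_{L^{2}}\le C\|w\|_{L^{2}}\le C\|w\|_{X^{1}}$ and $\|JL\omega^{0}\|_{L^{2}}=\|\sin y\,\partial_{x}(1+\Delta^{-1})\omega^{0}\|_{L^{2}}\le C\|\omega^{0}\|_{H^{1}}$, giving the bound $C\nu t\,\|w\|_{X^{1}}\|\omega^{0}\|_{H^{1}}$. Applying Young's inequality to each error term with the cross-term weight chosen so that the two resulting $\|w\|_{X^{1}}^{2}$ contributions sum to at most $2\nu\|w\|_{X^{1}}^{2}$, they are swallowed by $-2\nu\|w\|_{X^{1}}^{2}$, leaving
\[
\frac{d}{dt}\|w\|_{X}^{2}\le -\nu\|w\|_{X^{1}}^{2}+C_{0}\nu(1+t^{2})\|\omega^{0}(t)\|_{H^{1}}^{2}\le C_{0}\nu(1+t^{2})\|\omega^{0}(t)\|_{H^{1}}^{2},
\]
which is \eqref{error-ineq-1}; note the factor $t^{2}$ enters only through the $(e^{-\nu t}-1)$ term.

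I expect the main obstacle to be regularity bookkeeping rather than anything conceptual. By Lemma \ref{lemma-Euler-growth} the Euler solution $\omega^{0}(t)$ is only known to lie in $H^{1}$, so $\Delta\omega^{0}\notin L^{2}$ in general; one can therefore neither pair $\Delta\omega^{0}$ directly with $Lw$ nor simply quote the dissipation identity of Lemma \ref{lemma-dissi-linear} for $w$. The remedy is to integrate by parts in every term so that all pairings are of the form $H^{1}\times H^{1}$ or $L^{2}\times L^{2}$, and then to check that the single derivative left on $w$ in each error term is dominated by (half of) the $-2\nu\|w\|_{X^{1}}^{2}$ dissipation; tracking the constants so that this absorption genuinely works is the one place that needs care. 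One must also justify differentiating $t\mapsto\|w(t)\|_{X}^{2}$, which follows from parabolic smoothing for $\omega^{\nu}$ and a routine density/approximation argument, together with the fact that both flows preserve the non-shear space $X$ on which $L>0$.
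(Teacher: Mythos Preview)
Your proof is correct and follows essentially the same approach as the paper: both compute $\tfrac{d}{dt}\langle Lw,w\rangle$, kill the $JLw$ contribution by anti-self-adjointness, use $|e^{-\nu t}-1|\le\nu t$, integrate the Laplacian term by parts, and then absorb the cross terms into the negative dissipation via Young's inequality / completing the square. The only cosmetic difference is the bookkeeping of the viscous term: the paper keeps $\nu\Delta\omega^{\nu}$ intact and pairs it with $L\omega^{\nu}-L\omega^{0}$, so the dissipation appears as $-c_{0}\nu\|\omega^{\nu}\|_{H^{1}}^{2}$ and the cross terms are absorbed in $\|\omega^{\nu}\|_{H^{1}}$, whereas you split $\nu\Delta\omega^{\nu}=\nu\Delta w+\nu\Delta\omega^{0}$ and absorb in $\|w\|_{X^{1}}$; your version has the mild advantage that the regularity issue with $\Delta\omega^{0}\notin L^{2}$ is explicitly handled by the integration by parts you flag.
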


\begin{proof}
Let $\psi^{\nu},\psi^{0}$ be the corresponding stream functions. Denote
$\omega=\omega^{\nu}-\omega^{0}$ and $\psi=\psi^{\nu}-\psi^{0}$, then
\[
\omega_{t}+e^{-\nu t}\sin y\partial_{x}(\omega-\psi)+(e^{-\nu t}-1)\sin
y\partial_{x}(\omega^{0}-\psi^{0})-\nu\bigtriangleup\omega^{\nu}=0.
\]
We have
\[
\ \ \ \frac{d}{dt}\frac{1}{2}\left\Vert \omega^{\nu}-\omega^{0}\right\Vert
_{X}^{2}=\frac{d}{dt}\frac{1}{2}\int_{\mathbb{T}_{\alpha}}(|\omega
|^{2}-|\bigtriangledown\psi|^{2})dxdy=\int_{\mathbb{T}_{\alpha}}\omega
_{t}\left(  \omega-\psi\right)  dxdy
\]%
\begin{align*}
&  =-\int_{\mathbb{T}_{\alpha}}(e^{-\nu t}-1)\sin y\partial_{x}(\omega
^{0}-\psi^{0})\left(  \omega-\psi\right)  dxdy+\nu\int_{\mathbb{T}_{\alpha}%
}\bigtriangleup\omega^{\nu}(\omega^{\nu}-\psi^{\nu})dxdy\\
&  \ \ \ \ \ \ \ \ \ -\nu\int_{\mathbb{T}_{\alpha}}\bigtriangleup\omega^{\nu
}(\omega^{0}-\psi^{0})dxdy\\
&  =I+II+III.
\end{align*}
Since $0\leq1-e^{-\nu t}\leq\nu t$ when $t>0$, we have
\begin{align*}
I  &  =\left(  1-e^{-\nu t}\right)  \int_{\mathbb{T}_{\alpha}}\sin
y\partial_{x}(\omega^{0}-\psi^{0})\left(  \omega^{\nu}-\psi^{\nu}\right)
dxdy\\
&  =-\left(  1-e^{-\nu t}\right)  \int_{\mathbb{T}_{\alpha}}\sin y\partial
_{x}(\omega^{\nu}-\psi^{\nu})\left(  \omega^{0}-\psi^{0}\right)  dxdy\\
&  \leq C\nu t\left\Vert \omega^{\nu}\right\Vert _{H^{1}}\left\Vert \omega
^{0}\right\Vert _{L^{2}}.
\end{align*}
By integration by parts and (\ref{positive-H1}),
\[
II=-\nu\int_{\mathbb{T}_{\alpha}}(|\bigtriangledown\omega^{\nu}|^{2}%
-|\omega^{\nu}|^{2})dxdy\leq-c_{0}\nu\left\Vert \omega^{\nu}\right\Vert
_{H^{1}}^{2}.
\]
For the last term, we have
\[
III=\nu\int_{\mathbb{T}_{\alpha}}\nabla\omega^{\nu}\cdot\nabla(\omega^{0}%
-\psi^{0})dxdy\leq\nu\left\Vert \omega^{\nu}\right\Vert _{H^{1}}\left\Vert
\omega^{0}\right\Vert _{H^{1}}.
\]
Combining above, we get
\begin{align*}
\ \ \ \  &  \ \ \ \ \ \frac{d}{dt}\frac{1}{2}\int_{\mathbb{T}_{\alpha}%
}(|\omega|^{2}-|\bigtriangledown\psi|^{2})dxdy\\
&  \leq\nu\left(  -c_{0}\left\Vert \omega^{\nu}\right\Vert _{H^{1}}%
^{2}+\left\Vert \omega^{\nu}\right\Vert _{H^{1}}\left(  Ct\left\Vert
\omega^{0}\right\Vert _{L^{2}}+\left\Vert \omega^{0}\right\Vert _{H^{1}%
}\right)  \right) \\
&  \leq C\nu\left(  1+t^{2}\right)  \left\Vert \omega^{0}\left(  t\right)
\right\Vert _{H^{1}}^{2}.
\end{align*}
This proves (\ref{error-ineq-1}).
\end{proof}

As a corollary, combining (\ref{error-ineq-1}), Lemma \ref{lemma-Euler-growth}
and (\ref{positive-H1}), we have
\begin{equation}
\left\Vert \omega^{\nu}\left(  t\right)  -\omega^{0}\left(  t\right)
\right\Vert _{X}^{2}\leq\left\Vert \omega^{\nu}\left(  0\right)  -\omega
^{0}\left(  0\right)  \right\Vert _{X}^{2}+C_{1}\nu\left(  1+t^{5}\right)
\left\Vert \omega^{0}\left(  0\right)  \right\Vert _{X^{1}}^{2}%
,\ \label{error-inequ-2}%
\end{equation}
for some constant $C_{1}>0.\ $

We are now ready to prove the linear enhanced damping.

\begin{proof}
[Proof of Theorem \ref{thm-linearized} i)]The proof follows by the same
arguments in \cite{kiselev-annals-mixing}. We sketch it here. Fixed
$\delta,\tau>0$, we choose $N$ such that
\[
\exp\left(  -2\lambda_{N}\tau\right)  <c_{0}\delta^{2},
\]
where $c_{0}$ is the constant in (\ref{positive-L2}). Define a compact set
\[
K=span\left\{  e_{1},\cdots,e_{N}\right\}  =R\left(  P_{N}\right)  .
\]
Denote $t_{1}=T_{c}(N,\frac{1}{10},K)$ as in Lemma \ref{lemma-rage-quant} and
let $\nu\left(  \delta,\tau\right)  $ be such that
\[
\nu\left(  \delta,\tau\right)  C_{1}\left(  1+t_{1}^{5}\right)  <\frac
{1}{10\lambda_{N}},
\]
where $C_{1}$ is the constant in the estimate (\ref{error-inequ-2}). For
$0<\nu<\nu\left(  \delta,\tau\right)  $, suppose that
\[
\left\Vert \omega^{\nu}\left(  t\right)  \right\Vert _{X^{1}}^{2}>\lambda
_{N}\left\Vert \omega^{\nu}\left(  t\right)  \right\Vert _{X}^{2}%
\]
is true for $t$ in some interval $\left(  a,b\right)  \subset\left(
0,\tau/\nu\right)  $. Then by (\ref{dissipation-law-NS-lemma}), we have
\begin{equation}
\left\Vert \omega^{\nu}\left(  b\right)  \right\Vert _{X}^{2}\leq\exp\left(
-\nu\lambda_{N}\left(  b-a\right)  \right)  \left\Vert \omega^{\nu}\left(
a\right)  \right\Vert _{X}^{2}. \label{decay-case1}%
\end{equation}
Now consider any $t_{0}\in\left(  0,\tau/\nu\right)  $ satisfying
\[
\left\Vert \omega^{\nu}\left(  t_{0}\right)  \right\Vert _{X^{1}}^{2}%
\leq\lambda_{N}\left\Vert \omega^{\nu}\left(  t_{0}\right)  \right\Vert
_{X}^{2}.
\]
Denote $\omega_{0}=\omega^{\nu}\left(  t_{0}\right)  $ and let $\omega
^{0}\left(  t\right)  $ $\left(  t\in\left[  t_{0},t_{0}+t_{1}\right]
\right)  \ $be the solution of (\ref{eqn-linearized Euler}) with $\omega
^{0}\left(  t_{0}\right)  =\omega_{0}$. By the choice of $t_{0},\ \nu\left(
\delta,\tau\right)  $ and (\ref{error-inequ-2}), we have
\begin{equation}
\left\Vert \omega^{\nu}\left(  t\right)  -\omega^{0}\left(  t\right)
\right\Vert _{X}^{2}\leq\frac{1}{10}\left\Vert \omega_{0}\right\Vert _{X}%
^{2},\ \forall\ t\in\left[  t_{0},t_{0}+t_{1}\right]  . \label{error-ineq-3}%
\end{equation}
By the definition of $t_{1}$, we have
\[
\frac{1}{t_{1}}\int_{t_{0}}^{t_{0}+t_{1}}\left\Vert P_{N}\omega^{0}\left(
t\right)  \right\Vert _{X}^{2}dt\leq\frac{1}{10}\left\Vert \omega
_{0}\right\Vert _{X}^{2}.
\]
Since $\left\Vert \omega^{0}\left(  t\right)  \right\Vert _{X}=\left\Vert
\omega_{0}\right\Vert _{X}$ by the conservation of $\left\langle
L\omega,\omega\right\rangle $ for the equation (\ref{eqn-linearized Euler}),
it follows that
\[
\frac{1}{t_{1}}\int_{t_{0}}^{t_{0}+t_{1}}\left\Vert \left(  1-P_{N}\right)
\omega^{0}\left(  t\right)  \right\Vert _{X}^{2}dt\geq\frac{9}{10}\left\Vert
\omega_{0}\right\Vert _{X}^{2}.
\]
Combined with (\ref{error-ineq-3}), above implies that
\[
\frac{1}{t_{1}}\int_{t_{0}}^{t_{0}+t_{1}}\left\Vert \left(  1-P_{N}\right)
\omega^{\nu}\left(  t\right)  \right\Vert _{X}^{2}dt\geq\frac{1}{2}\left\Vert
\omega_{0}\right\Vert _{X}^{2}.
\]
For $\omega^{\nu}\left(  t\right)  =\sum_{k\geq1}c_{k}e_{k}$, we have
\begin{align*}
\left\Vert \omega^{\nu}\left(  t\right)  \right\Vert _{X^{1}}^{2}  &
=\sum_{k\geq1}\left(  \lambda_{k}-1\right)  \left\vert c_{k}\right\vert
^{2}\geq\frac{1}{\lambda_{N}}\sum_{k\geq N+1}\left(  1-\frac{1}{\lambda_{k}%
}\right)  \left\vert c_{k}\right\vert ^{2}\\
&  =\frac{1}{\lambda_{N}}\left\Vert \left(  1-P_{N}\right)  \omega^{\nu
}\left(  t\right)  \right\Vert _{X}^{2},
\end{align*}
and thus
\[
\int_{t_{0}}^{t_{0}+t_{1}}\left\Vert \omega^{\nu}\left(  t\right)  \right\Vert
_{X^{1}}^{2}dt\geq\frac{\lambda_{N}t_{1}}{2}\left\Vert \omega_{0}\right\Vert
_{X}^{2}=\frac{\lambda_{N}t_{1}}{2}\left\Vert \omega^{\nu}\left(
t_{0}\right)  \right\Vert _{X}^{2}.
\]
Then (\ref{dissipation-law-NS-lemma}) implies that
\begin{align}
\left\Vert \omega^{\nu}\left(  t_{0}+t_{1}\right)  \right\Vert _{X}^{2}  &
\leq\left\Vert \omega^{\nu}\left(  t_{0}\right)  \right\Vert _{X}^{2}-2\nu
\int_{t_{0}}^{t_{0}+t_{1}}\left\Vert \omega^{\nu}\left(  t\right)  \right\Vert
_{X^{1}}^{2}dt\label{decay-case2}\\
&  \leq\left(  1-\lambda_{N}\nu t_{1}\right)  \left\Vert \omega^{\nu}\left(
t_{0}\right)  \right\Vert _{X}^{2}\leq e^{-\lambda_{N}\nu t_{1}}\left\Vert
\omega^{\nu}\left(  t_{0}\right)  \right\Vert _{X}^{2}.\nonumber
\end{align}
We can split the interval $\left[  0,\frac{\tau}{\nu}\right]  $ into a union
of intervals such that either (\ref{decay-case1}) or (\ref{decay-case2}) holds
true. Therefore we have
\[
\left\Vert \omega^{\nu}\left(  \frac{\tau}{\nu}\right)  \right\Vert _{X}%
^{2}\leq e^{-\lambda_{N}\tau}\left\Vert \omega^{\nu}\left(  0\right)
\right\Vert _{X}^{2}<c_{0}\delta^{2}\left\Vert \omega^{\nu}\left(  0\right)
\right\Vert _{X}^{2}%
\]
and by (\ref{positive-L2})
\[
\left\Vert \omega^{\nu}\left(  \frac{\tau}{\nu}\right)  \right\Vert _{L^{2}%
}^{2}\leq\frac{1}{c_{0}}\left\Vert \omega^{\nu}\left(  \frac{\tau}{\nu
}\right)  \right\Vert _{X}^{2}<\delta^{2}\left\Vert \omega^{\nu}\left(
0\right)  \right\Vert _{X}^{2}<\delta^{2}\left\Vert \omega^{\nu}\left(
0\right)  \right\Vert _{L^{2}}^{2}\text{.}%
\]
This finishes the proof of Theorem \ref{thm-linearized} i).
\end{proof}

\subsection{Linearized Navier-Stokes on a square torus}

Now we consider the linearized equation (\ref{eqn-bar-LNS}) on the square
torus
\[
\mathbb{T}=\left\{  0<y<2\pi,0<x<2\pi\right\}  .
\]
In this case, there is a two dimensional kernel space $W_{1}$ spanned by
$\left\{  \cos x,\sin x\right\}  $ of the operator $L=1+\Delta^{-1}$ on the
non-shear space $X$. We will sketch the changes induced by these anomalous
modes, in the proof of Theorem \ref{thm-linearized} ii).

First, we note that $L$ is positive on the space $X_{1}=\left(  I-P_{1}%
\right)  X$, where $P_{1}$ is the projection of $X$ to $W_{1}$. Let
$\omega\left(  t\right)  $ be the solution of (\ref{eqn-bar-LNS}) with any
initial data $\omega\left(  0\right)  \in X$. Then $\omega_{1}=\left(
I-P_{1}\right)  \omega$ satisfies the equation
\begin{equation}
\partial_{t}\omega_{1}=\nu\Delta\omega_{1}-e^{-\nu t}\left[  \left(
I-P_{1}\right)  \sin y\partial_{x}\left(  1+\Delta^{-1}\right)  \right]
\omega_{1}. \label{linearized-NS-projected}%
\end{equation}
It is easy to check that the same dissipation law
\[
\frac{d}{dt}\int_{\mathbb{T}}(|\omega_{1}|^{2}-|\bigtriangledown\psi_{1}%
|^{2})dxdy=-2\nu\int_{\mathbb{T}}(|\bigtriangledown\omega_{1}|^{2}-|\omega
_{1}|^{2})dxdy,
\]
holds true for (\ref{linearized-NS-projected}). Moreover, there exists
$c_{0}>0$ such that
\begin{equation}
\int_{\mathbb{T}}(|\omega|^{2}-|\bigtriangledown\psi|^{2})dxdy\geq
c_{0}\left\Vert \omega\right\Vert _{L^{2}}^{2}, \label{positivity-L2-X1}%
\end{equation}
and
\[
\int_{\mathbb{T}}(|\bigtriangledown\omega|^{2}-|\omega|^{2})dxdy\geq
c_{0}\left\Vert \omega\right\Vert _{H^{1}}^{2},
\]
for $\omega\in X_{1}$. Define the operator $A_{1}:\left(  X_{1}\right)
^{\ast}\rightarrow X_{1}$ by
\[
A_{1}=-\left(  I-P_{1}\right)  \sin y\partial_{x}\left(  1+\Delta^{-1}\right)
=\left(  I-P_{1}\right)  JL.
\]
Since $L|_{X_{1}}>0$, $\left[  \cdot,\cdot\right]  =\left\langle L\cdot
,\cdot\right\rangle $ is again an equivalent inner product on $X_{1}$ and
$A_{1}$ is anti-selfadjoint on $\left(  X_{1},\left[  \cdot,\cdot\right]
\right)  $. Indeed, for any $w_{1},w_{2}\in X_{1}$%
\begin{align*}
\left[  A_{1}w_{1},w_{2}\right]   &  =\left\langle L\left(  I-P_{1}\right)
JLw_{1},w_{2}\right\rangle =\left\langle LJLw_{1},w_{2}\right\rangle
=-\left\langle Lw_{1},JLw_{2}\right\rangle \\
&  =-\left\langle Lw_{1},\left(  I-P_{1}\right)  JLw_{2}\right\rangle
=-\left[  w_{1},A_{1}w_{2}\right]  .
\end{align*}
Therefore the spectrum of $A_{1}$ lies on the imaginary axis. We will show
that $A_{1}$ has no embedded imaginary eigenvalues.

\begin{lemma}
\label{lemma-continuous-A-torus}The spectrum of $A_{1}$ is purely continuous.
\end{lemma}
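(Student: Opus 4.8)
The plan is to mimic the proof of Lemma \ref{lemma-spectra-rectangular} ii), making the adjustments forced by the projection $I-P_1$. First I would establish that the continuous spectrum of $A_1 = (I-P_1)JL$ on $X_1$ is the whole imaginary axis. The operator $A_1$ is a compact perturbation of $D = -\sin y\,\partial_x$ restricted to $X_1$: indeed $(I-P_1)JL = (I-P_1)J(I + \Delta^{-1}) = (I-P_1)J + (I-P_1)J\Delta^{-1}$, and the term $(I-P_1)J\Delta^{-1}$ is compact since $\Delta^{-1}$ is smoothing, while $(I-P_1)J - J = -P_1 J$ is a finite-rank (hence compact) operator because $P_1$ is the rank-two projection onto $\mathrm{span}\{\cos x,\sin x\}$. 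Therefore by Weyl's theorem the essential (continuous) spectrum of $A_1$ on $X_1$ equals that of $D$ on $X_1$, which is the entire imaginary axis. Combined with the anti-selfadjointness of $A_1$ on $(X_1,[\cdot,\cdot])$ already established in the text, it remains only to rule out embedded eigenvalues.

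Next I would suppose $A_1\omega = \lambda\omega$ with $\lambda\in i\mathbf{R}$ and $0\neq\omega\in X_1$, and expand $\omega = \sum_{k\neq0}\omega_k(y)e^{ikx}$. The key point is that $P_1$ only touches the $k=\pm1$ Fourier modes, so for $|k|\geq 2$ the equation reads exactly $ik\sin y(\omega_k - \psi_k) = \lambda\omega_k$, i.e.\ the Rayleigh equation with $c = \lambda/(ik)\in\mathbf{R}$; by Lemma \ref{lemma-inflection-value} we must have $c=0$, so $(-d^2/dy^2 + k^2 - 1)\psi_k = 0$, forcing $\psi_k = 0$ since $k^2 - 1 \geq 3 > 0$. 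For $k = \pm1$, since $\omega\in X_1$ we already have $\omega_{\pm1}\perp\{\cos x,\sin x\}$ in the $x$-variable — but that is automatic for all $k$; the real content is that the $k=\pm1$ component of $\omega$, call it $\omega_1(y)e^{ix}$, actually lies in $W_1^{\perp}$ as a function on the torus, meaning... here one must be careful: $W_1 = \mathrm{span}\{\cos x,\sin x\}$ consists of $y$-independent functions, so $(I-P_1)$ annihilates the $y$-average of $\omega_{\pm1}$. Thus $\int_0^{2\pi}\omega_{\pm1}(y)\,dy = 0$. For $k=\pm1$ the equation is $(I-P_1)\big(i\sin y(\omega_{\pm1}-\psi_{\pm1})\big) = \lambda\omega_{\pm1}$, i.e.\ $i\sin y(\omega_{\pm1}-\psi_{\pm1}) - (\text{its }y\text{-average}) = \lambda\omega_{\pm1}$. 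Taking the $y$-average of this equation and using $\int\omega_{\pm1} = 0$ gives consistency; then one still gets the Rayleigh equation $(-d^2/dy^2 + 1 - \sin y/(\sin y - c))\psi_{\pm1} = \text{const}$ up to the rank-one correction. I would argue that the constant must vanish (again by integrating, using $\psi_{\pm1} = (-d^2/dy^2+1)^{-1}\omega_{\pm1}$ and $\int\omega_{\pm1}=0$, so $\int\psi_{\pm1}$ is controlled), reducing to the homogeneous Rayleigh equation, whence $c=0$ and $(-d^2/dy^2)\psi_{\pm1} = 0$ with periodic boundary conditions forces $\psi_{\pm1}$ constant, hence $\omega_{\pm1} = -\psi_{\pm1}''+0 \cdot$... more precisely $\omega_{\pm1} = (-d^2/dy^2+1)\psi_{\pm1} = \psi_{\pm1} = $ const, and combined with $\int\omega_{\pm1}=0$ this gives $\omega_{\pm1}=0$. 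Therefore all $\omega_k = 0$, a contradiction.

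The main obstacle is exactly the $k=\pm1$ case: unlike the rectangular torus, where every Fourier mode directly yields a clean Rayleigh equation whose only admissible eigenvalue is excluded by $\alpha>1$, here the $k=\pm1$ modes are precisely the ones living in $\ker L$ before projection, and the projection $I-P_1$ introduces an inhomogeneous (rank-one) term into the eigenvalue equation. Handling this requires tracking the $y$-average constraint $\int_0^{2\pi}\omega_{\pm1}\,dy = 0$ built into membership in $X_1$, and showing it propagates through the (modified) Rayleigh equation to force $\omega_{\pm1}=0$. The smoothing/compactness bookkeeping for the Weyl argument and the appeal to Lemma \ref{lemma-inflection-value} for the higher modes are routine; the delicate step is verifying that no embedded eigenvalue can hide in the two anomalous modes, which is the whole reason the projection $P_1$ had to be removed in the first place.
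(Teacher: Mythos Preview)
Your overall strategy---Weyl's theorem for the essential spectrum, then Fourier decomposition to exclude embedded eigenvalues---is sound, and the modes $|k|\ge 2$ go through exactly as you say. The gap is in the $k=\pm 1$ step, which as you rightly note is the whole point of the lemma.

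Your proposed mechanism for killing the rank-one correction, ``integrate and use $\int\omega_{\pm1}=0$'', does not work. Integrating the projected eigenvalue equation over $y$ is a tautology: both sides have zero $y$-mean by construction of $P_1$, so you learn nothing. Passing to the inhomogeneous Rayleigh form and integrating again yields only a relation between $\int\frac{\sin y}{\sin y-c}\psi_{\pm1}\,dy$ and the constant, not the vanishing of the constant. In fact the constant does \emph{not} vanish in general, so the hoped-for reduction to the homogeneous Rayleigh equation is false as stated.

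The paper's argument avoids Fourier decomposition altogether. Writing $A_1\omega=\lambda\omega$ as $A\omega-\lambda\omega=\tilde\omega\in W_1$ and using $W_1=\ker L\subset\ker A$, one splits into two cases. If $\lambda\neq 0$, then $\omega+\lambda^{-1}\tilde\omega$ is an eigenvector of $A$ on $X$ with eigenvalue $\lambda\neq 0$; the proof of Lemma~\ref{lemma-spectra-rectangular} excludes nonzero eigenvalues of $A$ even at $\alpha=1$ (since $c=\lambda/(ik)\neq 0$ can never equal the inflection value $0$). If $\lambda=0$, one first observes $\tilde\omega\neq 0$ (else $A\omega=0$ forces $L\omega=0$, contradicting $0\neq\omega\in X_1$); then at mode $\pm1$ the equation $\sin y\,\partial_x(\omega-\psi)=\tilde\omega$ reads $\psi_{\pm1}''=\mathrm{const}/\sin y\notin L^2$, so $\psi\notin H^2$, a contradiction. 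The obstruction in the $\lambda=0$ case is regularity, not an integral constraint, and that is what your argument misses.
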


\begin{proof}
Suppose $A_{1}$ has an eigenvalue $\lambda\in i\mathbf{R}$ and $A_{1}%
\omega=\lambda\omega,$ where $0\neq\omega\in X_{1}$. Then
\[
A\omega-\lambda\omega=JL\omega-\lambda\omega=\tilde{\omega}\in W_{1}.
\]
If $\lambda\neq0$, by noting that $A\tilde{\omega}=0$, we get
\[
A\left(  \omega+\frac{1}{\lambda}\tilde{\omega}\right)  =\lambda\left(
\omega+\frac{1}{\lambda}\tilde{\omega}\right)  .
\]
That is, $\lambda$ is an eigenvalue of $A$. This is a contradiction, since by
the proof of Lemma \ref{lemma-spectra-rectangular}, the operator $A$ has no
nonzero eigenvalues. If $\lambda=0$, then we must have $A\omega=\tilde{\omega
}$ for some $0\neq\tilde{\omega}\in W_{1}$, since $\omega\in X_{1}$ implies
that $A\omega\neq0$. Let $\tilde{\omega}=c_{1}e^{ix}+c_{-1}e^{-ix}\in W_{1}$
and
\[
\ \ \ \psi=\Delta^{-1}\omega=a_{1}\left(  y\right)  e^{ix}+a_{-1}\left(
y\right)  e^{-ix}.
\]
$\ $From the equation $A\omega=\sin y\partial_{x}(\omega+\psi)=\tilde{\omega}%
$, we get
\[
a_{1}^{\prime\prime}\left(  y\right)  =\frac{c_{1}}{i\sin y},\ \ a_{-1}%
^{\prime\prime}\left(  y\right)  =\frac{c_{-1}}{-i\sin y},
\]
and thus $\psi\notin H^{2}$. This shows that $0$ is not an eigenvalue of
$A_{1}$ and the proof of the lemma is finished.
\end{proof}

By the above lemma, we can use the RAGE theorem for the semigroup $e^{tA_{1}}$
on $X_{1}$, which corresponds to solutions of the projected linearized Euler
equation%
\begin{equation}
\partial_{t}\omega=\left(  I-P_{1}\right)  \sin y\partial_{x}\left(
1+\Delta^{-1}\right)  \omega. \label{linearized Euler-projected}%
\end{equation}
In particular, let $P_{N}$ be the projection of $L^{2}$ to the space spanned
by the first $N$ eigenfunction of $-\bigtriangleup\ $on $X_{1}$, then we have:
For any $N,\kappa>0$, there exists $T_{c}(N,\kappa,K)$ such that for all
$T\geq T_{c}$ and any $\omega\left(  0\right)  \in R\left(  P_{N}\right)  $,
\begin{equation}
\frac{1}{T}\int_{0}^{T}\Vert P_{N}e^{tJL}\omega\left(  0\right)  \Vert_{X}%
^{2}dt\leq\kappa\Vert\omega\left(  0\right)  \Vert_{X}^{2}. \label{rage-torus}%
\end{equation}

In the next Lemma, we obtain the same estimates on the growth of solutions of
(\ref{linearized Euler-projected}), as in Lemma \ref{lemma-Euler-growth}.

\begin{lemma}
\label{lemma-growth-euler-torus}Let $\omega\left(  t\right)  $ be a solution
of (\ref{linearized Euler-projected}) with $\omega\left(  0\right)  \in
X_{1}\cap H^{1}\left(  \mathbb{T}\right)  $. Then
\[
\left\Vert \omega\left(  t\right)  \right\Vert _{H^{1}}\leq C\left(
1+t\right)  \left\Vert \omega\left(  0\right)  \right\Vert _{H^{1}},
\]
for some constant $C$.
\end{lemma}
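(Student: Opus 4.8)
The plan is to follow the proof of Lemma \ref{lemma-Euler-growth} essentially verbatim, with $JL$ replaced by the generator $A_1$ of the projected linearized Euler flow (\ref{linearized Euler-projected}) on $X_1$, and the flow $e^{tJL}$ replaced by the solution operator $e^{tA_1}$ of (\ref{linearized Euler-projected}) on $X_1$. The only genuinely new point is that the projection $P_1$ commutes with the spatial derivatives $\partial_x,\partial_y$, so that after differentiating (\ref{linearized Euler-projected}) everything stays inside $X_1$, where the positivity of $L$ is available.

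First I would record a uniform $L^2$ bound on $\omega(t)$. The quantity $\langle L\omega,\omega\rangle=\int_{\mathbb{T}}(|\omega|^2-|\nabla\psi|^2)dxdy$ is conserved along (\ref{linearized Euler-projected}) (this is the $\nu=0$ case of the dissipation law for (\ref{linearized-NS-projected}), and also follows from $A_1$ being anti-selfadjoint in $[\cdot,\cdot]=\langle L\cdot,\cdot\rangle$). Hence, by the positivity estimate (\ref{positivity-L2-X1}) on $X_1$, $\|\omega(t)\|_{L^2}\le C\|\omega(0)\|_{L^2}$; equivalently $e^{tA_1}$ is an isometry of $(X_1,[\cdot,\cdot])$ and therefore bounded on $L^2$ uniformly in $t$.

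Next I would verify that $P_1$ commutes with $\partial_x$ and with $\partial_y$. For $\partial_x$: the subspace $W_1$ and its $L^2$-orthogonal complement inside $X$ are both $\partial_x$-invariant, since $\langle\partial_x f,\cos x\rangle=\langle f,\sin x\rangle$ and $\langle\partial_x f,\sin x\rangle=-\langle f,\cos x\rangle$. For $\partial_y$: $P_1\partial_y=\partial_y P_1=0$, because $W_1$ is constant in $y$ and $\langle\partial_y f,\cos x\rangle=\langle\partial_y f,\sin x\rangle=0$ by $y$-periodicity. Thus, if $\omega(0)\in X_1\cap H^1$ then $\partial_x\omega(t),\partial_y\omega(t)\in X_1$ for all $t$. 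Differentiating (\ref{linearized Euler-projected}) in $x$ shows $\partial_x\omega$ again solves (\ref{linearized Euler-projected}), so $\|\partial_x\omega(t)\|_{L^2}\le C\|\partial_x\omega(0)\|_{L^2}$. Differentiating in $y$ (using $[\partial_y,\Delta^{-1}]=0$) gives
\[
\partial_t\partial_y\omega=(I-P_1)\sin y\,\partial_x(1+\Delta^{-1})\partial_y\omega+(I-P_1)\cos y\,\partial_x(1+\Delta^{-1})\omega,
\]
that is, (\ref{linearized Euler-projected}) with an inhomogeneous forcing term lying in $X_1$.

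Finally, Duhamel's formula and the uniform $L^2$-boundedness of $e^{tA_1}$ give
\[
\|\partial_y\omega(t)\|_{L^2}\le C\|\partial_y\omega(0)\|_{L^2}+C\int_0^t\|\partial_x(1+\Delta^{-1})\omega(s)\|_{L^2}\,ds,
\]
and since $\|\partial_x(1+\Delta^{-1})\omega(s)\|_{L^2}\le\|\partial_x\omega(s)\|_{L^2}+\|\partial_x\Delta^{-1}\omega(s)\|_{L^2}\le C\|\partial_x\omega(0)\|_{L^2}+C\|\omega(0)\|_{L^2}\le C\|\nabla\omega(0)\|_{L^2}$ uniformly in $s$, we obtain $\|\partial_y\omega(t)\|_{L^2}\le C(1+t)\|\nabla\omega(0)\|_{L^2}$. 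Adding the three estimates yields $\|\omega(t)\|_{H^1}\le C(1+t)\|\omega(0)\|_{H^1}$. The only step requiring attention beyond Lemma \ref{lemma-Euler-growth} is the commutation $[P_1,\partial_x]=[P_1,\partial_y]=0$ and the consequent invariance of $X_1$ under differentiation; everything else is identical, so I expect no real obstacle here.
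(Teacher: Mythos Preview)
Your proof is correct and follows essentially the same approach as the paper's own proof, which also reduces to Lemma~\ref{lemma-Euler-growth} once the commutation $[P_1,\nabla]=0$ is established. The paper justifies this commutation by noting that $P_1$ is the projector onto $\ker(1+\Delta^{-1})$ and that $\nabla$ commutes with $1+\Delta^{-1}$, whereas you verify it directly on the basis $\{\cos x,\sin x\}$; both arguments are valid and the remainder of your proof matches the paper.
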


\begin{proof}
The proof is very similar to that of Lemma \ref{lemma-Euler-growth}. We only
sketch it briefly. By the conservation of $\left\langle L\omega,\omega
\right\rangle $ for the equation (\ref{linearized Euler-projected}) and the
positivity of $L|_{X_{1}}$, the $L^{2}$ norm of $\omega\left(  t\right)  $ is
bounded by $\omega\left(  0\right)  $. Since $P_{1}$ is the projector of $X$
to $\ker L=\ker\left(  1+\Delta^{-1}\right)  $ and $\nabla$ is commutable with
$1+\Delta^{-1}$, so $P_{1}$ is also commutable with $\nabla$. Then the
estimates of $\partial\omega\left(  t\right)  $ follows in the same way as in
the proof of Lemma \ref{lemma-Euler-growth}.
\end{proof}

Similarly, we can estimate the difference of solutions of
(\ref{linearized Euler-projected}) and (\ref{linearized-NS-projected}).

\begin{lemma}
Let $\omega^{\nu},\omega^{0}\ $be the solutions of the projected linearized NS
equation (\ref{eqn-bar-LNS}) and Euler equation (\ref{eqn-linearized Euler})
with the initial data $\omega^{\nu}\left(  0\right)  \in X_{1}$ and
$\omega_{0}\in X_{1}\cap H^{1}.$ Then there exists constants $C_{0}>0\ $such
that
\[
\frac{d}{dt}\left\Vert \omega^{\nu}-\omega^{0}\right\Vert _{X}^{2}\leq
C_{0}\nu\left(  1+t^{2}\right)  \left\Vert \omega^{0}\left(  t\right)
\right\Vert _{H^{1}}^{2},
\]
for $t\in\left(  0,+\infty\right)  $.
\end{lemma}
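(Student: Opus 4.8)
The plan is to mimic the proof of Lemma \ref{lemma-difference-NS-Euler}, with the only change being the extra projection $(I-P_1)$ appearing in both evolution equations. First I would set $\omega=\omega^{\nu}-\omega^{0}$, $\psi=\psi^{\nu}-\psi^{0}$, and record the equation it satisfies. Since $\omega^{\nu}$ solves $\partial_t\omega^{\nu}=\nu\Delta\omega^{\nu}+e^{-\nu t}(I-P_1)JL\omega^{\nu}$ (with $\omega^{\nu}\in X_1$) and $\omega^{0}$ solves $\partial_t\omega^{0}=(I-P_1)JL\omega^{0}$, subtracting gives
\[
\omega_t=\nu\Delta\omega^{\nu}+e^{-\nu t}(I-P_1)JL\omega+(e^{-\nu t}-1)(I-P_1)JL\omega^{0}.
\]
Then I would differentiate the quantity $\frac12\Vert\omega\Vert_X^2=\frac12\langle L\omega,\omega\rangle=\frac12\int_{\mathbb{T}}(|\omega|^2-|\nabla\psi|^2)dxdy$ in time, obtaining $\langle L\omega,\omega_t\rangle$, and expand into three terms exactly as before: a transport term from $(e^{-\nu t}-1)$, the dissipative term $\nu\langle L\omega^{\nu},\Delta\omega^{\nu}\rangle$, and the cross term $-\nu\langle L\omega^{0},\Delta\omega^{\nu}\rangle$.

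The key points to check are that the projection $(I-P_1)$ does not spoil any of the integrations by parts. For the term $e^{-\nu t}\langle L\omega,(I-P_1)JL\omega\rangle$: since $\omega\in X_1$ and $L$ commutes with $P_1$, $L\omega\in X_1$, so $\langle L\omega,(I-P_1)JL\omega\rangle=\langle L\omega,JL\omega\rangle=0$ by anti-selfadjointness of $J$; this term vanishes just as in Lemma \ref{lemma-dissi-linear}. For the transport term $I=(1-e^{-\nu t})\langle L\omega^{\nu},(I-P_1)JL\omega^{0}\rangle$, again $L\omega^{\nu}\in X_1$ lets us drop $(I-P_1)$, and then $\langle L\omega^{\nu},JL\omega^{0}\rangle=-\langle JL\omega^{\nu},L\omega^{0}\rangle$, which in the $\int\sin y\,\partial_x(\cdot)(\cdot)$ form is bounded by $C\nu t\Vert\omega^{\nu}\Vert_{H^1}\Vert\omega^0\Vert_{L^2}$ using $0\le 1-e^{-\nu t}\le\nu t$. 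The dissipative term becomes $-\nu\int_{\mathbb{T}}(|\nabla\omega^{\nu}|^2-|\omega^{\nu}|^2)dxdy\le -c_0\nu\Vert\omega^{\nu}\Vert_{H^1}^2$ by the positivity estimate \eqref{positivity-L2-X1} on $X_1$; here one uses that $\omega^{\nu}\in X_1$. The cross term $III$ is integrated by parts to $\nu\int\nabla\omega^{\nu}\cdot\nabla(\omega^0-\psi^0)dxdy\le\nu\Vert\omega^{\nu}\Vert_{H^1}\Vert\omega^0\Vert_{H^1}$.

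Combining, I get
\[
\frac{d}{dt}\tfrac12\Vert\omega\Vert_X^2\le\nu\bigl(-c_0\Vert\omega^{\nu}\Vert_{H^1}^2+\Vert\omega^{\nu}\Vert_{H^1}(Ct\Vert\omega^0\Vert_{L^2}+\Vert\omega^0\Vert_{H^1})\bigr)\le C\nu(1+t^2)\Vert\omega^0(t)\Vert_{H^1}^2,
\]
where the last step completes the square in $\Vert\omega^{\nu}\Vert_{H^1}$, absorbing it at the cost of the factor $c_0^{-1}$ and the $(1+t^2)$. This is exactly the claimed inequality. The only genuine obstacle is bookkeeping: one must consistently use that both $\omega^{\nu}$ and $\omega^{0}$ stay in $X_1$ for all time (so that $P_1$-projections can be inserted or removed freely and the $X_1$-positivity estimates apply), which follows because $(I-P_1)$ is the generator's range and the initial data lie in $X_1$; I would state this invariance explicitly at the outset. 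Everything else is a verbatim repetition of the rectangular-torus computation with $X$ replaced by $X_1$.
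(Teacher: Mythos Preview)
Your proposal is correct and follows precisely the approach the paper indicates: the paper's own proof is the single sentence ``The proof is the same as that of Lemma~\ref{lemma-difference-NS-Euler} by using the fact that $L|_{X_1}>0$ and $\langle L(I-P_1)\cdot,\cdot\rangle=\langle L\cdot,\cdot\rangle$,'' and you have correctly identified and verified both ingredients. One cosmetic point: for the dissipative term you need the $H^1$ positivity estimate on $X_1$ (the unlabeled display below \eqref{positivity-L2-X1}), not \eqref{positivity-L2-X1} itself.
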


The proof is the same as that of Lemma \ref{lemma-difference-NS-Euler} by
using the fact that $L|_{X_{1}}>0$ and $\left\langle L\left(  I-P_{1}\right)
\cdot,\cdot\right\rangle =\left\langle L\cdot,\cdot\right\rangle $.

Then by the same proof of Theorem \ref{thm-linearized} i), we can show the
enhanced damping for the solution $\omega_{1}\left(  t\right)  \ $of the
projected equation (\ref{linearized-NS-projected}). More precisely, for any
$\tau>0$ and $\delta>0$, if $\nu$ is small enough, then
\[
\left\Vert \omega_{1}\left(  \frac{\tau}{\nu}\right)  \right\Vert _{L^{2}%
}<\delta\left\Vert \omega_{1}\left(  0\right)  \right\Vert _{L^{2}}.
\]
Since $\omega_{1}\left(  t\right)  =\left(  I-P_{1}\right)  \omega\left(
t\right)  $, this proves Theorem \ref{thm-linearized} ii).

\section{Metastability of nonlinear Navier-Stokes equation}

\label{section-metasta}

In this section, we prove the metastability or enhanced damping for the
nonlinear Navier-Stokes equation (\ref{eqn-NS}) on a torus $\mathbb{T}%
_{\alpha}$ $\left(  \alpha\geq1\right)  $. Let $P_{2}$ to be the projection of
$L^{2}\left(  \mathbb{T}_{\alpha}\right)  \ $to the subspace $W_{2}%
=span\left\{  \cos y,\sin y\right\}  $. For any initial data $\omega\left(
0\right)  \in L^{2}$, let
\[
P_{2}\omega\left(  0\right)  =d_{1}\cos y+d_{2}\sin y=D\sin\left(
y+y_{1}\right)  ,
\]
where $D=\sqrt{d_{1}^{2}+d_{2}^{2}}$ and $y_{1}=\tan^{-1}\left(  d_{1}%
/d_{2}\right)  $. When $\left\Vert \left(  I-P_{2}\right)  \omega\left(
0\right)  \right\Vert _{L^{2}}$ is small, we can equivalently consider the
perturbation near the shear flow $U\left(  y\right)  =D\sin\left(
y+y_{1}\right)  $ with initial data $\omega\left(  0\right)  $ satisfying
$P_{2}\omega\left(  0\right)  =0$, for which the analysis is almost the same
as for the shear flow $U\left(  y\right)  =\sin y$. For simplicity, in the
proof of Theorem \ref{thm-nonlinear}, we only consider the perturbations near
$U\left(  y\right)  =\sin y$ with $P_{2}\omega\left(  0\right)  =0$. As in the
proof of Theorem \ref{thm-linearized} for the linearized NS equation, we will
treat the high and low frequency parts of the non-shear perturbation
separately. In particular, for the low frequency part, we will compare the
solutions of the nonlinear NS equation and the linearized Euler equation, and
then use the RAGE Theorem to control the time average. However, a significant
difference with the linearized NS equation is that the shear and non-shear
parts are strongly coupled for the nonlinear NS\ equation. Therefore, the main
issue is to control the interaction terms. We will consider the equations on
rectangular and square tori separately. On the square torus, the existence of
anomalous modes makes the interactions terms considerably more subtle to handle.

\subsection{The case of rectangular torus}

Consider the nonlinear Navier-Stokes equation near the Kolmogorov flow
\begin{align}
\partial_{t}\omega^{\nu}  &  =\nu\Delta\omega^{\nu}-e^{-\nu t}\left[  \sin
y\partial_{x}\left(  1+\Delta^{-1}\right)  \right]  \omega^{\nu}+U^{\nu}%
\cdot\nabla\omega^{\nu}\label{eqn-NS-nonlinear}\\
&  =L\left(  t\right)  \omega^{\nu}+U^{\nu}\cdot\nabla\omega^{\nu},\nonumber
\end{align}
on $\mathbb{T}_{\alpha}$ $\left(  \alpha>1\right)  $, where $\omega^{\nu
},U^{\nu}$ are the perturbations of vorticity and velocity. We split
$\omega^{\nu},U^{\nu}$ into shear and non-shear components. More precisely, we
write $\omega^{\nu}=\omega_{s}^{\nu}+\omega_{n}^{\nu},\ $ where the shear
part
\[
\omega_{s}^{\nu}\left(  t,y\right)  =\frac{\alpha}{2\pi}\int_{0}^{\frac{2\pi
}{\alpha}}\omega^{\nu}\left(  t,x,y\right)  dx=P_{0}\omega^{\nu},\
\]
and non-shear part
\[
\omega_{n}^{\nu}\left(  t,x,y\right)  =\left(  I-P_{0}\right)  \omega^{\nu
}=P_{\neq0}\omega^{\nu}\in X.
\]
Correspondingly, $U^{\nu}=U_{s}^{\nu}+U_{n}^{\nu}$ and $\psi^{\nu}=\psi
_{s}^{\nu}+\psi_{n}^{\nu}$ where $\psi^{\nu}$ is the stream function. We also
denote
\[
U^{\nu}=\left(  u^{\nu},v^{\nu}\right)  ,\ \ U_{s}^{\nu}=\left(  u_{s}^{\nu
}\left(  t,y\right)  ,0\right)  ,\ U_{n}^{\nu}=\left(  u_{n}^{\nu},v_{n}^{\nu
}\right)  .
\]
Then the equation (\ref{eqn-NS-nonlinear}) can be written as
\begin{equation}
\partial_{t}\omega_{s}^{\nu}=\nu\partial_{yy}\omega_{s}^{\nu}+P_{0}\left(
U_{n}^{\nu}\cdot\nabla\omega_{n}^{\nu}\right)  =\nu\partial_{yy}\omega
_{s}^{\nu}+\partial_{y}P_{0}\left(  v_{n}^{\nu}\omega_{n}^{\nu}\right)  ,
\label{eqn-shear-NS}%
\end{equation}
and
\begin{align}
\partial_{t}\omega_{n}^{\nu}  &  =L\left(  t\right)  \omega_{n}^{\nu}%
+u_{s}^{\nu}\partial_{x}\omega_{n}^{\nu}+v_{n}^{\nu}\partial_{y}\omega
_{s}^{\nu}+P_{\neq0}\left(  U_{n}^{\nu}\cdot\nabla\omega_{n}^{\nu}\right)
\label{eqn-non-shear-NS}\\
&  =L\left(  t\right)  \omega_{n}^{\nu}+u_{s}^{\nu}\partial_{x}\omega_{n}%
^{\nu}+v_{n}^{\nu}\partial_{y}\omega_{s}^{\nu}+\partial_{x}\left(  u_{n}^{\nu
}\omega_{n}^{\nu}\right)  +\partial_{y}P_{\neq0}\left(  v_{n}^{\nu}\omega
_{n}^{\nu}\right)  .\nonumber
\end{align}

First, we show that the dissipation law (\ref{dissipation-law-NS}) also holds
true for solutions of the nonlinear equation (\ref{eqn-NS-nonlinear}).

\begin{lemma}
\label{lemma-dissipation-NS-nonlinear}Let $\omega^{\nu}\left(  t\right)  $ be
a solution of (\ref{eqn-NS-nonlinear}) with the initial data $\omega^{\nu
}\left(  t\right)  \in L^{2}\left(  \mathbb{T}_{\alpha}\right)  $. Then
\begin{equation}
\frac{d}{dt}\int_{\mathbb{T}_{\alpha}}(|\omega^{\nu}|^{2}-|\bigtriangledown
\psi^{\nu}|^{2})dxdy=-2\nu\int_{\mathbb{T}_{\alpha}}(|\bigtriangledown
\omega^{\nu}|^{2}-|\omega^{\nu}|^{2})dxdy, \label{dissipation-law-Nonlinear}%
\end{equation}
for any $t>0$.
\end{lemma}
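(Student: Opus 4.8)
The plan is to follow the proof of Lemma~\ref{lemma-dissi-linear} almost verbatim, the only new ingredient being that the nonlinear transport term $U^{\nu}\cdot\nabla\omega^{\nu}$ contributes nothing to the time derivative of the quadratic form $\langle L\omega^{\nu},\omega^{\nu}\rangle=\int_{\mathbb{T}_{\alpha}}(|\omega^{\nu}|^{2}-|\nabla\psi^{\nu}|^{2})\,dxdy$. Writing (\ref{eqn-NS-nonlinear}) as $\partial_{t}\omega^{\nu}=\nu\Delta\omega^{\nu}+e^{-\nu t}JL\omega^{\nu}+U^{\nu}\cdot\nabla\omega^{\nu}$ with $J,L$ as in (\ref{defn-J-L}), and using that $L=1+\Delta^{-1}$ is self-adjoint and $t$-independent, I would first compute
\[
\frac{d}{dt}\left\langle L\omega^{\nu},\omega^{\nu}\right\rangle=2\nu\left\langle L\omega^{\nu},\Delta\omega^{\nu}\right\rangle+2e^{-\nu t}\left\langle L\omega^{\nu},JL\omega^{\nu}\right\rangle+2\left\langle L\omega^{\nu},U^{\nu}\cdot\nabla\omega^{\nu}\right\rangle.
\]
The middle term vanishes because $J=-\sin y\,\partial_{x}$ is anti-selfadjoint, so $\langle f,Jf\rangle=0$ for any real $f$; and the viscous term is exactly the one from Lemma~\ref{lemma-dissi-linear}, namely $2\nu\langle(1+\Delta^{-1})\omega^{\nu},\Delta\omega^{\nu}\rangle=2\nu\langle\omega^{\nu},\Delta\omega^{\nu}\rangle+2\nu\langle\omega^{\nu},\omega^{\nu}\rangle=-2\nu\int_{\mathbb{T}_{\alpha}}(|\nabla\omega^{\nu}|^{2}-|\omega^{\nu}|^{2})\,dxdy$, which is the asserted right-hand side of (\ref{dissipation-law-Nonlinear}).

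So everything reduces to showing $\langle L\omega^{\nu},U^{\nu}\cdot\nabla\omega^{\nu}\rangle=0$. Splitting $L=I+\Delta^{-1}$, the first piece is $\langle\omega^{\nu},U^{\nu}\cdot\nabla\omega^{\nu}\rangle=\tfrac12\int_{\mathbb{T}_{\alpha}}U^{\nu}\cdot\nabla(|\omega^{\nu}|^{2})\,dxdy=-\tfrac12\int_{\mathbb{T}_{\alpha}}(\nabla\cdot U^{\nu})\,|\omega^{\nu}|^{2}\,dxdy=0$, since $U^{\nu}=\nabla^{\perp}\psi^{\nu}$ is divergence-free (this is just conservation of enstrophy by the transport term). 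For the second piece, $\langle\Delta^{-1}\omega^{\nu},U^{\nu}\cdot\nabla\omega^{\nu}\rangle=-\langle\psi^{\nu},U^{\nu}\cdot\nabla\omega^{\nu}\rangle$; writing $U^{\nu}\cdot\nabla\omega^{\nu}=\psi^{\nu}_{y}\omega^{\nu}_{x}-\psi^{\nu}_{x}\omega^{\nu}_{y}$ and integrating by parts twice (all boundary terms vanish by periodicity on $\mathbb{T}_{\alpha}$) one finds $\int_{\mathbb{T}_{\alpha}}\psi^{\nu}(\psi^{\nu}_{y}\omega^{\nu}_{x}-\psi^{\nu}_{x}\omega^{\nu}_{y})\,dxdy=0$, which is nothing but conservation of the kinetic energy $\tfrac12\int_{\mathbb{T}_{\alpha}}|\nabla\psi^{\nu}|^{2}\,dxdy$ under 2D Euler transport. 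Hence the transport term drops out and (\ref{dissipation-law-Nonlinear}) follows.

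I do not expect any real obstacle: the two conservation identities for the transport term are standard, and the remaining algebra is identical to Lemma~\ref{lemma-dissi-linear}. The only mild care needed is that the integrations by parts are legitimate, which is fine since the Navier--Stokes solution is smooth for $t>0$, so the identity is established for $t>0$ as claimed. A slightly slicker way to present the write-up is to observe at the outset that $\int_{\mathbb{T}_{\alpha}}|\omega^{\nu}|^{2}$ and $\int_{\mathbb{T}_{\alpha}}|\nabla\psi^{\nu}|^{2}$ are both invariant under the nonlinear term $U^{\nu}\cdot\nabla\omega^{\nu}$, so that $\frac{d}{dt}\int_{\mathbb{T}_{\alpha}}(|\omega^{\nu}|^{2}-|\nabla\psi^{\nu}|^{2})$ picks up contributions only from $\nu\Delta\omega^{\nu}$ and $e^{-\nu t}JL\omega^{\nu}$, at which point the computation is literally that of Lemma~\ref{lemma-dissi-linear}.
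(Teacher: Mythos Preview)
Your proposal is correct and follows essentially the same route as the paper: both write $\frac{d}{dt}\langle L\omega^{\nu},\omega^{\nu}\rangle=2\langle L\omega^{\nu},\partial_{t}\omega^{\nu}\rangle$, observe that the $e^{-\nu t}JL$ contribution vanishes by anti-selfadjointness of $J$, reduce the viscous contribution to the computation in Lemma~\ref{lemma-dissi-linear}, and check that the nonlinear transport term annihilates $\langle L\omega^{\nu},\cdot\rangle$. The only cosmetic difference is that for the $\psi^{\nu}$-piece the paper invokes the pointwise identity $U^{\nu}\cdot\nabla\psi^{\nu}=\nabla^{\perp}\psi^{\nu}\cdot\nabla\psi^{\nu}=0$ directly, whereas you reach the same conclusion via two integrations by parts; either way the argument is the same.
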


\begin{proof}
We have
\begin{align*}
\frac{d}{dt}\int_{\mathbb{T}_{\alpha}}(|\omega^{\nu}|^{2}-|\bigtriangledown
\psi^{\nu}|^{2})dxdy  &  =\int_{\mathbb{T}_{\alpha}}\omega_{t}^{\nu}\left(
\omega^{\nu}-\psi^{\nu}\right)  dxdy\\
&  =\left\langle L\left(  t\right)  \omega^{\nu},\left(  \omega^{\nu}%
-\psi^{\nu}\right)  \right\rangle +\int_{\mathbb{T}_{\alpha}}U^{\nu}%
\cdot\nabla\omega^{\nu}\left(  \omega^{\nu}-\psi^{\nu}\right)  dxdy\\
&  =-2\nu\int_{\mathbb{T}_{\alpha}}(|\bigtriangledown\omega^{\nu}|^{2}%
-|\omega^{\nu}|^{2})dxdy+\frac{1}{2}\int_{\mathbb{T}_{\alpha}}U^{\nu}%
\cdot\nabla\frac{1}{2}\omega^{\nu2}dxdy\\
&  \ \ \ \ \ \ \ \ \ \ \ \ \ -\int_{\mathbb{T}_{\alpha}}\left(  U^{\nu}%
\cdot\nabla\psi^{\nu}\right)  \omega^{\nu}dxdy\\
&  =-2\nu\int_{\mathbb{T}_{\alpha}}(|\bigtriangledown\omega^{\nu}|^{2}%
-|\omega^{\nu}|^{2})dxdy.
\end{align*}
In the above, we use the fact that $U^{\nu}\cdot\nabla\psi^{\nu}=0$ and
\[
\left\langle L\left(  t\right)  \omega^{\nu},\left(  \omega^{\nu}-\psi^{\nu
}\right)  \right\rangle =-2\nu\int_{\mathbb{T}_{\alpha}}(|\bigtriangledown
\omega^{\nu}|^{2}-|\omega^{\nu}|^{2})dxdy
\]
as in the proof of Lemma \ref{lemma-dissi-linear}.
\end{proof}

Denote $Y$ to be the space of mean zero functions in $L^{2}\left(
\mathbb{T}_{\alpha}\right)  $, depending only on $y$. Denote $Y_{1}=\left(
I-P_{2}\right)  Y$. Then the operator $L=1+\Delta^{-1}$ is positive on $Y_{1}%
$. There exists $c_{0}>0$ such that
\begin{equation}
\int_{\mathbb{T}_{\alpha}}(|\omega^{\nu}|^{2}-|\partial_{y}\psi^{\nu}%
|^{2})dxdy\geq c_{0}\left\Vert \omega^{\nu}\right\Vert _{L^{2}}^{2},
\label{positivity-L2-y}%
\end{equation}
and
\begin{equation}
\int_{\mathbb{T}_{\alpha}}(|\partial_{y}\omega^{\nu}|^{2}-|\omega^{\nu}%
|^{2})dxdy\geq c_{0}\left\Vert \omega\right\Vert _{H^{1}}^{2},
\label{positivity-H1-y}%
\end{equation}
for all $\omega\in Y_{1}$.

For a solution $\omega^{\nu}=\omega_{s}^{\nu}+\omega_{n}^{\nu}$ of the
nonlinear NS equation (\ref{eqn-NS-nonlinear}), let $\omega_{s}^{\nu}%
=\omega_{s1}^{\nu}+\omega_{s2}^{\nu}$, where $\omega_{s1}^{\nu}=P_{2}%
\omega_{s}^{\nu}$ and $\omega_{s2}^{\nu}\in Y_{1}$. Then
(\ref{dissipation-law-Nonlinear}) implies that
\begin{align}
\ \  &  \frac{d}{dt}\left(  \int_{\mathbb{T}_{\alpha}}(|\omega_{s2}^{\nu}%
|^{2}-|\partial_{y}\psi_{s2}^{\nu}|^{2})dxdy+\left\Vert \omega_{n}^{\nu
}\right\Vert _{X}^{2}\right) \label{dissipation-coupled}\\
&  =-2\nu\left(  \int_{\mathbb{T}_{\alpha}}(|\partial_{y}\omega_{s2}^{\nu
}|^{2}-|\omega_{s2}^{\nu}|^{2})dxdy+\left\Vert \omega_{n}^{\nu}\right\Vert
_{X^{1}}^{2}\right)  .\nonumber
\end{align}
In particular, by the positivity estimates (\ref{positive-L2}),
(\ref{positive-H1}), it follows from above that there exists $C>0$ such that
\begin{equation}
\left\Vert \omega_{s2}^{\nu}\right\Vert _{L^{2}}\left(  t\right)  +\left\Vert
\omega_{n}^{\nu}\right\Vert _{L^{2}}\left(  t\right)  \leq C\left(  \left\Vert
\omega_{s2}^{\nu}\right\Vert _{L^{2}}\left(  0\right)  +\left\Vert \omega
_{n}^{\nu}\right\Vert \left(  0\right)  _{L^{2}}\right)  \leq Cd\nu,
\label{estimate-shear-non-kernel-nonshear}%
\end{equation}
where we assume $\left\Vert \omega^{\nu}\right\Vert _{L^{2}}\left(  0\right)
\leq d\nu$ for a constant $d>0$ to be determined later. To estimate
$\omega_{s1}^{\nu}=a_{1}\cos y+a_{2}\sin y$, we project (\ref{eqn-shear-NS})
to $\left\{  \cos y,\sin y\right\}  $ to get
\begin{align}
\frac{d}{dt}a_{1}  &  =-\nu a_{1}-\int_{\mathbb{T}_{\alpha}}v_{n}^{\nu}%
\omega_{n}^{\nu}\sin y\ dxdy,\label{eqn-a}\\
\frac{d}{dt}a_{2}  &  =-\nu a_{2}+\int_{\mathbb{T}_{\alpha}}v_{n}^{\nu}%
\omega_{n}^{\nu}\cos y\ dxdy.\nonumber
\end{align}
Let
\[
a\left(  t\right)  =\left\Vert \omega_{s1}^{\nu}\right\Vert _{L^{2}}%
=\sqrt{a_{1}^{2}+a_{2}^{2}},
\]
then by (\ref{eqn-a}), we have
\begin{align*}
\frac{da}{dt}  &  \leq-\nu a\left(  t\right)  +\sqrt{\left\vert \int%
_{\mathbb{T}_{\alpha}}v_{n}^{\nu}\omega_{n}^{\nu}\sin y\ dxdy\right\vert
^{2}+\left\vert \int_{\mathbb{T}_{\alpha}}v_{n}^{\nu}\omega_{n}^{\nu}\cos
y\ dxdy\right\vert ^{2}}\\
&  \leq-\nu a\left(  t\right)  +\left\Vert v_{n}^{\nu}\right\Vert \left(
t\right)  _{L^{2}}\left\Vert \omega_{n}^{\nu}\right\Vert \left(  t\right)
_{L^{2}}\\
&  \leq-\nu a\left(  t\right)  +\left(  Cd\nu\right)  ^{2}.
\end{align*}
Therefore
\[
\left\Vert \omega_{s1}^{\nu}\right\Vert _{L^{2}}=a\left(  t\right)  \leq
e^{-\nu t}a\left(  0\right)  +\left(  Cd\nu\right)  ^{2}\int_{0}^{t}%
e^{-\nu\left(  t-s\right)  }ds\leq Cd\nu.
\]
Combined with (\ref{estimate-shear-non-kernel-nonshear}), it follows from
above that
\begin{equation}
\left\Vert \omega_{s}^{\nu}\right\Vert _{L^{2}}\left(  t\right)  \leq
Cd\nu,\ \text{for all }t>0\text{. } \label{estimate-shear-L2}%
\end{equation}

In the dissipation law (\ref{dissipation-coupled}), $\omega_{s2}^{\nu}$ and
$\omega_{n}^{\nu}$ are coupled. In the next lemma, we show that when $d$ is
small, the dissipation law for $\omega_{n}^{\nu}$ can be "separated" from
(\ref{dissipation-coupled}).

\begin{lemma}
\label{lemma-dissipation-non-shear}There exists a constant $d$ depending only
on $\alpha$, such that when $\left\Vert \omega^{\nu}\right\Vert _{L^{2}%
}\left(  0\right)  \leq d\nu$, then
\begin{equation}
\frac{d}{dt}\int_{\mathbb{T}_{\alpha}}\left\Vert \omega_{n}^{\nu}\right\Vert
_{X}^{2}\leq-\nu\left\Vert \omega_{n}^{\nu}\right\Vert _{X^{1}}^{2}.
\label{dissipation-law-square-non-shear}%
\end{equation}

\end{lemma}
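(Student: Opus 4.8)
The plan is to differentiate the weighted energy $\|\omega_n^\nu\|_X^2=\langle L\omega_n^\nu,\omega_n^\nu\rangle$ along the non-shear equation \eqref{eqn-non-shear-NS}, peel off the dissipative term, and reduce everything else to a single exchange term coupling $\omega_n^\nu$ to the shear remainder. Writing $\partial_t\omega_n^\nu=L(t)\omega_n^\nu+u_s^\nu\partial_x\omega_n^\nu+v_n^\nu\partial_y\omega_s^\nu+P_{\neq0}(U_n^\nu\cdot\nabla\omega_n^\nu)$ and using $L\omega_n^\nu=\omega_n^\nu-\psi_n^\nu$, the first point is that $2\langle L\omega_n^\nu,L(t)\omega_n^\nu\rangle=-2\nu\|\omega_n^\nu\|_{X^1}^2$: the antisymmetric piece $e^{-\nu t}JL$ with $J=-\sin y\,\partial_x$ contributes nothing, exactly as in the proof of Lemma \ref{lemma-dissi-linear}. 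The second is that $\langle L\omega_n^\nu,P_{\neq0}(U_n^\nu\cdot\nabla\omega_n^\nu)\rangle=\langle L\omega_n^\nu,U_n^\nu\cdot\nabla\omega_n^\nu\rangle=0$, since $L\omega_n^\nu$ is non-shear while $\int_{\mathbb{T}_\alpha}\omega_n^\nu\,U_n^\nu\cdot\nabla\omega_n^\nu$ and $\int_{\mathbb{T}_\alpha}\psi_n^\nu\,U_n^\nu\cdot\nabla\omega_n^\nu=-\int_{\mathbb{T}_\alpha}(U_n^\nu\cdot\nabla\psi_n^\nu)\omega_n^\nu$ both vanish by $\nabla\cdot U_n^\nu=0$ and $U_n^\nu\cdot\nabla\psi_n^\nu=0$ — the same cancellation used in Lemma \ref{lemma-dissipation-NS-nonlinear}. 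Hence $\tfrac{d}{dt}\|\omega_n^\nu\|_X^2=-2\nu\|\omega_n^\nu\|_{X^1}^2+\mathcal N$ with $\mathcal N=2\langle L\omega_n^\nu,\,u_s^\nu\partial_x\omega_n^\nu+v_n^\nu\partial_y\omega_s^\nu\rangle$.

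The next step is to simplify $\mathcal N$. Integrating by parts in $x$ and using $\partial_x\psi_n^\nu=-v_n^\nu$ together with the fact that $u_s^\nu,\omega_s^\nu$ depend only on $y$ — which makes $\int\omega_n^\nu u_s^\nu\partial_x\omega_n^\nu$ and $\int\psi_n^\nu v_n^\nu\partial_y\omega_s^\nu=-\tfrac12\int\partial_x(\psi_n^\nu)^2\,\partial_y\omega_s^\nu$ vanish and turns $-2\int\psi_n^\nu u_s^\nu\partial_x\omega_n^\nu$ into $-2\int u_s^\nu v_n^\nu\omega_n^\nu$ — one obtains $\mathcal N=2\int_{\mathbb{T}_\alpha}v_n^\nu\omega_n^\nu(\partial_y\omega_s^\nu-u_s^\nu)\,dxdy$. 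Since $\omega_s^\nu=-\partial_y u_s^\nu$, this equals $-2\int_{\mathbb{T}_\alpha}v_n^\nu\omega_n^\nu(1+\partial_{yy})u_s^\nu\,dxdy$; as $(1+\partial_{yy})$ annihilates $\mathrm{span}\{\cos y,\sin y\}$, only the part $u_{s2}^\nu$ of $u_s^\nu$ coming from $\omega_{s2}^\nu=(I-P_2)\omega_s^\nu$ survives, and $(1+\partial_{yy})u_{s2}^\nu=-\partial_y(L\omega_{s2}^\nu)$. Thus the whole interaction reduces to $\mathcal N=2\int_{\mathbb{T}_\alpha}v_n^\nu\,\omega_n^\nu\,\partial_y(L\omega_{s2}^\nu)\,dxdy$; integrating by parts in $y$ and substituting \eqref{eqn-shear-NS} one may also write $\mathcal N=-\tfrac{d}{dt}\|\omega_{s2}^\nu\|_X^2-2\nu\|\omega_{s2}^\nu\|_{X^1}^2$, which is just the fact that the cross-term in \eqref{dissipation-coupled} is exact.

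Finally one estimates $\mathcal N$. By Cauchy--Schwarz $|\mathcal N|\le 2\|v_n^\nu\omega_n^\nu\|_{L^2}\,\|\partial_y(L\omega_{s2}^\nu)\|_{L^2}$. For the first factor, Ladyzhenskaya's inequality on $\mathbb{T}_\alpha$ with the elliptic bounds $\|v_n^\nu\|_{L^4}\lesssim\|v_n^\nu\|_{L^2}^{1/2}\|v_n^\nu\|_{H^1}^{1/2}\lesssim\|\omega_n^\nu\|_{L^2}$ and $\|\omega_n^\nu\|_{L^4}\lesssim\|\omega_n^\nu\|_{L^2}^{1/2}\|\omega_n^\nu\|_{H^1}^{1/2}$ gives $\|v_n^\nu\omega_n^\nu\|_{L^2}\lesssim\|\omega_n^\nu\|_{L^2}^{3/2}\|\omega_n^\nu\|_{H^1}^{1/2}$; for the second, since $-\partial_{yy}$ on $(I-P_2)Y$ has spectrum $\ge 4$, $\|\partial_y(L\omega_{s2}^\nu)\|_{L^2}\le\|\partial_y\omega_{s2}^\nu\|_{L^2}\le C\|\omega_{s2}^\nu\|_{X^1}$. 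Using the a priori smallness $\|\omega_n^\nu\|_{L^2},\ \|\omega_{s2}^\nu\|_{L^2}\le Cd\nu$ from \eqref{estimate-shear-L2}--\eqref{estimate-shear-non-kernel-nonshear}, the equivalence $\|\omega_n^\nu\|_{H^1}^2\le c_0^{-1}\|\omega_n^\nu\|_{X^1}^2$ from \eqref{positive-H1}, and Young's inequality, one is led to a bound of the shape $|\mathcal N|\le 2\nu\|\omega_{s2}^\nu\|_{X^1}^2+C'd^2\nu\|\omega_n^\nu\|_{X^1}^2$ with $C'$ depending only on $\alpha$. The residual $\|\omega_{s2}^\nu\|_{X^1}^2$ is exactly the dissipation \eqref{dissipation-coupled} supplies for $\omega_{s2}^\nu$: closing the estimate by propagating the slightly enlarged energy $\|\omega_n^\nu\|_X^2+M\|\omega_{s2}^\nu\|_X^2$ for a suitable absolute constant $M$ — for which the $\mathcal N$-contributions essentially cancel, up to an $O(d^2)$ error relative to $\nu\|\omega_n^\nu\|_{X^1}^2$ — and then taking $d$ small in terms of $\alpha$ (through $c_0$ and the Sobolev constants) yields \eqref{dissipation-law-square-non-shear}. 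I expect the coupling between the shear remainder $\omega_{s2}^\nu$ and the non-shear part to be the main obstacle: $\mathcal N$ carries a full $y$-derivative of $\omega_{s2}^\nu$, for which one has only the $L^2$-smallness and the $\nu$-weighted, time-integrated control from \eqref{dissipation-coupled} (no pointwise $H^1$ bound), so the estimate cannot be closed against $\nu\|\omega_n^\nu\|_{X^1}^2$ alone; the smallness $\|\omega^\nu\|_{L^2}=O(d\nu)$ is precisely what makes the remaining cubic interaction terms of lower order than the dissipation.
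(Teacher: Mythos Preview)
Your computation up to the identity
\[
\frac{d}{dt}\|\omega_n^\nu\|_X^2=-2\nu\|\omega_n^\nu\|_{X^1}^2+\mathcal N,\qquad
\mathcal N=2\int_{\mathbb T_\alpha} v_n^\nu\,\omega_n^\nu\,\partial_y(L\omega_{s2}^\nu)\,dxdy,
\]
is correct, and you have correctly identified that the only obstruction is this single cross term. The gap is in how you choose to estimate $\mathcal N$. By leaving the $\partial_y$ on $L\omega_{s2}^\nu$ you are forced to control $\|\omega_{s2}^\nu\|_{X^1}$ pointwise in time, which (as you yourself note at the end) you do not have; the workaround via an enlarged energy $\|\omega_n^\nu\|_X^2+M\|\omega_{s2}^\nu\|_X^2$ would at best give a dissipation inequality for that combined quantity, not the pointwise inequality $\frac{d}{dt}\|\omega_n^\nu\|_X^2\le-\nu\|\omega_n^\nu\|_{X^1}^2$ that the lemma asserts. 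So the argument as written does not close.

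The missing step is a single integration by parts in $y$:
\[
\mathcal N=-2\int_{\mathbb T_\alpha}\partial_y\bigl(v_n^\nu\omega_n^\nu\bigr)\,L\omega_{s2}^\nu\,dxdy.
\]
Now no derivative falls on the shear part. Using $\|L\omega_{s2}^\nu\|_{L^2}\le\|\omega_{s2}^\nu\|_{L^2}\le Cd\nu$ from \eqref{estimate-shear-non-kernel-nonshear}, together with
\[
\|\partial_y(v_n^\nu\omega_n^\nu)\|_{L^2}\le \|v_n^\nu\|_{L^\infty}\|\partial_y\omega_n^\nu\|_{L^2}+\|\partial_y v_n^\nu\|_{L^4}\|\omega_n^\nu\|_{L^4}\le C\|\omega_n^\nu\|_{H^1}^2\le C\|\omega_n^\nu\|_{X^1}^2,
\]
one obtains $|\mathcal N|\le Cd\nu\|\omega_n^\nu\|_{X^1}^2$ directly, and hence
\[
\frac{d}{dt}\|\omega_n^\nu\|_X^2\le -(2-Cd)\nu\|\omega_n^\nu\|_{X^1}^2\le -\nu\|\omega_n^\nu\|_{X^1}^2
\]
for $d\le 1/C$. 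This is exactly what the paper does: it differentiates $\|\omega_{s2}^\nu\|_X^2$ via \eqref{eqn-shear-NS}, bounds the resulting cross term $2\int\partial_y(v_n^\nu\omega_n^\nu)(\omega_{s2}^\nu-\psi_{s2}^\nu)$ by $Cd\nu\|\omega_n^\nu\|_{X^1}^2$, and subtracts from the exact coupled law \eqref{dissipation-coupled} so that the $\|\omega_{s2}^\nu\|_{X^1}^2$ contributions cancel identically. Your route and the paper's are really the same computation viewed from the two sides of \eqref{dissipation-coupled}; the only thing you are missing is that the derivative in $\mathcal N$ should sit on the non-shear product, not on the shear remainder.
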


\begin{proof}
By (\ref{eqn-shear-NS}), we have
\begin{align*}
&  \ \ \ \ \frac{d}{dt}\int_{\mathbb{T}_{\alpha}}(|\omega_{s2}^{\nu}%
|^{2}-|\partial_{y}\psi_{s2}^{\nu}|^{2})dxdy\\
&  =2\int_{\mathbb{T}_{\alpha}}\partial_{t}\omega_{s2}^{\nu}\left(
\omega_{s2}^{\nu}-\psi_{s2}^{\nu}\right)  dxdy=2\int_{\mathbb{T}_{\alpha}%
}\partial_{t}\omega_{s}^{\nu}\left(  \omega_{s2}^{\nu}-\psi_{s2}^{\nu}\right)
dxdy\\
&  =-2\nu\int_{\mathbb{T}_{\alpha}}(|\partial_{y}\omega_{s2}^{\nu}%
|^{2}-|\omega_{s2}^{\nu}|^{2})dxdy+2\int_{\mathbb{T}_{\alpha}}\partial
_{y}\left(  v_{n}^{\nu}\omega_{n}^{\nu}\right)  \left(  \omega_{s2}^{\nu}%
-\psi_{s2}^{\nu}\right)  dxdy\\
&  \leq-2\nu\int_{\mathbb{T}_{\alpha}}(|\partial_{y}\omega_{s2}^{\nu}%
|^{2}-|\omega_{s2}^{\nu}|^{2})dxdy+C\left\Vert \omega_{n}^{\nu}\right\Vert
_{X^{1}}^{2}\left\Vert \omega_{s2}^{\nu}\right\Vert _{L^{2}}\\
&  \leq-2\nu\int_{\mathbb{T}_{\alpha}}(|\partial_{y}\omega_{s2}^{\nu}%
|^{2}-|\omega_{s2}^{\nu}|^{2})dxdy+Cd\nu\left\Vert \omega_{n}^{\nu}\right\Vert
_{X^{1}}^{2}.
\end{align*}
In the first inequality above, we use the Sobolev embedding and
(\ref{positive-H1}). Combined with (\ref{dissipation-coupled}), this gives
\[
\frac{d}{dt}\int_{\mathbb{T}_{\alpha}}\left\Vert \omega_{n}^{\nu}\right\Vert
_{X}^{2}\leq-\nu\left\Vert \omega_{n}^{\nu}\right\Vert _{X^{1}}^{2}\left(
2-Cd\right)  \leq-\nu\left\Vert \omega_{n}^{\nu}\right\Vert _{X^{1}}^{2},
\]
when $d\leq1/C$.
\end{proof}

To use the RAGE theorem to control the low frequency part of $\omega_{n}^{\nu
}$, we need to estimate the difference of the solutions of the nonlinear NS
equation (\ref{eqn-NS-nonlinear}) and the linearized Euler equation
(\ref{eqn-linearized Euler}).

\begin{lemma}
\label{lemma-difference-NS-non-shear}There exists $d>0$ such that for any
solution $\omega^{\nu}\ $of Navier-Stokes equation (\ref{eqn-NS-nonlinear})
with initial data satisfying $\left\Vert \omega^{\nu}\right\Vert _{L^{2}%
}\left(  0\right)  \leq d\nu$, and any solution $\omega^{0}$ of the linearized
Euler equation (\ref{eqn-linearized Euler}) with initial data in $X\cap H^{1}%
$, we have
\[
\frac{d}{dt}\left\Vert \omega_{n}^{\nu}-\omega^{0}\right\Vert _{X}^{2}\leq
C_{0}\nu\left(  1+t^{2}\right)  \left\Vert \omega^{0}\left(  t\right)
\right\Vert _{H^{1}}^{2},\ \forall t>0,
\]
for some constant $C_{0}>0$. Here, $\omega_{n}^{\nu}=P_{\neq0}\omega^{\nu}$ is
the non-shear part of $\omega^{\nu}$.
\end{lemma}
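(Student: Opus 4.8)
The plan is to follow the scheme of the proof of Lemma \ref{lemma-difference-NS-Euler}, treating the nonlinear interaction terms as lower order perturbations controlled by the a priori smallness estimates already established. Set $\omega:=\omega_{n}^{\nu}-\omega^{0}$ and $\psi:=\psi_{n}^{\nu}-\psi^{0}$, so that $\omega\in X$ and $\|\omega\|_{X}^{2}=\langle L\omega,\omega\rangle=\int_{\mathbb{T}_{\alpha}}(|\omega|^{2}-|\nabla\psi|^{2})\,dxdy$. Using (\ref{eqn-non-shear-NS}) for $\partial_{t}\omega_{n}^{\nu}$, equation (\ref{eqn-linearized Euler}) for $\partial_{t}\omega^{0}$, and $L(t)\omega_{n}^{\nu}=\nu\Delta\omega_{n}^{\nu}+e^{-\nu t}JL\omega_{n}^{\nu}$, I would first rewrite the difference equation as
\[
\partial_{t}\omega=\nu\Delta\omega_{n}^{\nu}+e^{-\nu t}JL\omega-(1-e^{-\nu t})JL\omega^{0}+N,
\]
where $N=u_{s}^{\nu}\partial_{x}\omega_{n}^{\nu}+v_{n}^{\nu}\partial_{y}\omega_{s}^{\nu}+P_{\neq0}(U_{n}^{\nu}\cdot\nabla\omega_{n}^{\nu})$ gathers the interaction terms, and then compute $\frac{d}{dt}\frac{1}{2}\|\omega\|_{X}^{2}=\int_{\mathbb{T}_{\alpha}}(\omega-\psi)\,\partial_{t}\omega\,dxdy$, estimating the four resulting pieces as follows. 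First, $e^{-\nu t}\langle L\omega,JL\omega\rangle=-\tfrac{1}{2}e^{-\nu t}\int\sin y\,\partial_{x}((\omega-\psi)^{2})\,dxdy=0$, since $\partial_{x}\sin y=0$ and $J$ is anti-selfadjoint. Second, using $0\le 1-e^{-\nu t}\le\nu t$, an integration by parts in $x$, and $\|\partial_{x}(\omega-\psi)\|_{L^{2}}\le C\|\omega\|_{H^{1}}\le C(\|\omega_{n}^{\nu}\|_{H^{1}}+\|\omega^{0}\|_{H^{1}})$, the term $-(1-e^{-\nu t})\langle L\omega,JL\omega^{0}\rangle$ is bounded by $C\nu t\,\|\omega^{0}\|_{L^{2}}(\|\omega_{n}^{\nu}\|_{H^{1}}+\|\omega^{0}\|_{H^{1}})$. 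Third, writing $\nu\langle L\omega,\Delta\omega_{n}^{\nu}\rangle=\nu\langle L\omega_{n}^{\nu},\Delta\omega_{n}^{\nu}\rangle-\nu\langle L\omega^{0},\Delta\omega_{n}^{\nu}\rangle$, the first piece equals $-\nu\|\omega_{n}^{\nu}\|_{X^{1}}^{2}\le-c_{0}\nu\|\omega_{n}^{\nu}\|_{H^{1}}^{2}$ by (\ref{positive-H1}), which is the crucial dissipative term, while the second is $\le C\nu\|\omega^{0}\|_{H^{1}}\|\omega_{n}^{\nu}\|_{H^{1}}$ after one integration by parts. The genuinely new contribution is the fourth piece $\langle L\omega,N\rangle$.

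To handle $\langle L\omega,N\rangle$ I would invoke the a priori bounds $\|\omega_{s}^{\nu}\|_{L^{2}}(t),\ \|\omega_{n}^{\nu}\|_{L^{2}}(t)\le Cd\nu$ from (\ref{estimate-shear-L2}) and Lemma \ref{lemma-dissipation-non-shear}, together with the elliptic estimates $\|u_{s}^{\nu}\|_{H^{1}}\le C\|\omega_{s}^{\nu}\|_{L^{2}}$ and $\|U_{n}^{\nu}\|_{H^{2}}\le C\|\omega_{n}^{\nu}\|_{L^{2}}$. In the term $u_{s}^{\nu}\partial_{x}\omega_{n}^{\nu}$ I integrate by parts in $x$ (valid since $u_{s}^{\nu}$ depends only on $y$) so that no derivative falls on $\omega_{n}^{\nu}$; in $v_{n}^{\nu}\partial_{y}\omega_{s}^{\nu}$ I integrate by parts in $y$ to move the derivative off $\omega_{s}^{\nu}$ onto $v_{n}^{\nu}=-\partial_{x}\psi_{n}^{\nu}$ and onto $L\omega=\omega-\psi$; and for $P_{\neq0}(U_{n}^{\nu}\cdot\nabla\omega_{n}^{\nu})=\nabla\cdot(\omega_{n}^{\nu}U_{n}^{\nu})$ I integrate by parts against $L\omega$ to obtain $-\int\nabla(\omega-\psi)\cdot(\omega_{n}^{\nu}U_{n}^{\nu})\,dxdy$ and then use the two-dimensional estimate $\|f\|_{L^{4}}\le C\|f\|_{L^{2}}^{1/2}\|\nabla f\|_{L^{2}}^{1/2}$. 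Each of these produces a bound of the form $Cd\nu\,\|\omega_{n}^{\nu}\|_{H^{1}}^{2}+Cd\nu\,\|\omega_{n}^{\nu}\|_{H^{1}}\cdot(\text{controlled quantities})$ plus strictly lower order terms. Choosing $d$ small (depending only on $\alpha$) makes all the $\|\omega_{n}^{\nu}\|_{H^{1}}^{2}$-contributions coming from $\langle L\omega,N\rangle$ and from the cross terms above dominated by $\tfrac{c_{0}}{2}\nu\|\omega_{n}^{\nu}\|_{H^{1}}^{2}$, hence absorbed into the dissipative term; applying Young's inequality to every remaining term carrying a factor $\|\omega_{n}^{\nu}\|_{H^{1}}$, and using $\|\omega^{0}\|_{L^{2}}\le\|\omega^{0}\|_{H^{1}}$ together with $t\le1+t^{2}$, what is left is $\le C_{0}\nu(1+t^{2})\|\omega^{0}(t)\|_{H^{1}}^{2}$ with $C_{0}$ depending only on $\alpha$, which is the claim.

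The main obstacle is the interaction term $v_{n}^{\nu}\partial_{y}\omega_{s}^{\nu}$: in contrast to the linearized equation, $\partial_{y}\omega_{s}^{\nu}$ is a first derivative of the shear perturbation, for which only the $L^{2}$ smallness $\|\omega_{s}^{\nu}\|_{L^{2}}\lesssim d\nu$ is available and no $H^{1}$ control. The remedy is to integrate by parts in $y$ so that the derivative is never placed on $\omega_{s}^{\nu}$, and to arrange the estimates so that the small factor $\|\omega_{s}^{\nu}\|_{L^{2}}$ is paired only against controlled norms ($\|\omega_{n}^{\nu}\|_{H^{1}}$ through $v_{n}^{\nu}$, and $\|\omega\|_{H^{1}}$ through $L\omega$), never generating $\|\omega_{s}^{\nu}\|_{H^{1}}$ or $\|\omega_{s}^{\nu}\|_{L^{\infty}}$; this requires some care in how the integrals are distributed across the $L^{4}$ factors. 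A secondary point is that the quadratic self-interaction $U_{n}^{\nu}\cdot\nabla\omega_{n}^{\nu}$ must be estimated without an $L^{\infty}$ bound on $U_{n}^{\nu}$ (unavailable from $\|\omega_{n}^{\nu}\|_{L^{2}}$ alone in two dimensions), which is why one uses the 2D Ladyzhenskaya / Gagliardo--Nirenberg inequality rather than a crude $L^{\infty}\times L^{2}$ estimate.
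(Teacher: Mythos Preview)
Your proposal is correct and follows the same overall scheme as the paper: write the equation for $\omega=\omega_{n}^{\nu}-\omega^{0}$, compute $\frac{d}{dt}\frac12\|\omega\|_{X}^{2}=\int(\omega-\psi)\,\partial_{t}\omega$, treat the linear pieces exactly as in Lemma~\ref{lemma-difference-NS-Euler} to produce the dissipative term $-c_{0}\nu\|\omega_{n}^{\nu}\|_{H^{1}}^{2}$, and then show that the three nonlinear interaction terms in $N$ are each of size $Cd\nu\|\omega_{n}^{\nu}\|_{H^{1}}(\|\omega_{n}^{\nu}\|_{H^{1}}+\|\omega^{0}\|_{H^{1}})$ so that for $d$ small they are absorbed by dissipation and Young's inequality.

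The only noteworthy differences are tactical. For the self-interaction $U_{n}^{\nu}\cdot\nabla\omega_{n}^{\nu}$, the paper first uses the cancellation
\[
\int_{\mathbb{T}_{\alpha}} U_{n}^{\nu}\cdot\nabla\omega_{n}^{\nu}\,(\omega_{n}^{\nu}-\psi_{n}^{\nu})\,dxdy=0
\]
(which follows since $U_{n}^{\nu}\cdot\nabla\psi_{n}^{\nu}=0$), so only the cross term against $(\omega^{0}-\psi^{0})$ survives; it then estimates with $\|U_{n}^{\nu}\|_{L^{4}}\|\omega_{n}^{\nu}\|_{L^{4}}\|\omega^{0}\|_{H^{1}}$. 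Your direct integration by parts against $L\omega$ and $L^{4}$ estimate also works and gives the same bound, but the cancellation makes the computation cleaner. Similarly, for $u_{s}^{\nu}\partial_{x}\omega_{n}^{\nu}$ the paper first drops the $\omega_{n}^{\nu}$ part of $(\omega-\psi)$ via $\int u_{s}^{\nu}\partial_{x}(\omega_{n}^{\nu})^{2}=0$, and for $v_{n}^{\nu}\partial_{y}\omega_{s}^{\nu}$ it writes the term as $-\int U_{n}^{\nu}\cdot\nabla(\omega-\psi)\,\omega_{s}^{\nu}$ and uses the $L^{\infty}$ bounds $\|u_{s}^{\nu}\|_{L^{\infty}}\le C\|\omega_{s}^{\nu}\|_{L^{2}}$ (one-dimensional Sobolev) and $\|U_{n}^{\nu}\|_{L^{\infty}}\le C\|\omega_{n}^{\nu}\|_{H^{1}}$ (since $U_{n}^{\nu}\in H^{2}$ when $\omega_{n}^{\nu}\in H^{1}$). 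Your $L^{4}$ route via Ladyzhenskaya avoids $L^{\infty}$ but requires a slightly longer product estimate; either way the outcome is the same.
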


\begin{proof}
Let $\psi_{n}^{\nu},\psi^{0}$ be the corresponding stream functions. Denote
$\omega=\omega_{n}^{\nu}-\omega^{0}$ and $\psi=\psi_{n}^{\nu}-\psi^{0}$, then
\begin{align*}
\omega_{t}  &  =-e^{-\nu t}\sin y\partial_{x}(\omega-\psi)-(e^{-\nu t}-1)\sin
y\partial_{x}(\omega^{0}-\psi^{0})+\nu\bigtriangleup\omega_{n}^{\nu}\\
&  \ \ \ \ \ \ \ \ \ \ +u_{s}^{\nu}\partial_{x}\omega_{n}^{\nu}+v_{n}^{\nu
}\partial_{y}\omega_{s}^{\nu}+P_{\neq0}\left(  U_{n}^{\nu}\cdot\nabla
\omega_{n}^{\nu}\right)  .
\end{align*}
Thus
\[
\ \ \ \frac{d}{dt}\frac{1}{2}\left\Vert \omega^{\nu}-\omega^{0}\right\Vert
_{X}^{2}=\int_{\mathbb{T}_{\alpha}}\omega_{t}\left(  \omega-\psi\right)  dxdy
\]%
\begin{align*}
&  =\left[  -\int_{\mathbb{T}_{\alpha}}(e^{-\nu t}-1)\sin y\partial_{x}%
(\omega^{0}-\psi^{0})\left(  \omega-\psi\right)  dxdy+\nu\int_{\mathbb{T}%
_{\alpha}}\bigtriangleup\omega_{n}^{\nu}(\omega-\psi)dxdy\right] \\
&  \ \ \ \ \ \ +\int_{\mathbb{T}_{\alpha}}u_{s}^{\nu}\partial_{x}\omega
_{n}^{\nu}\left(  \omega-\psi\right)  dxdy+\int_{\mathbb{T}_{\alpha}}%
v_{n}^{\nu}\partial_{y}\omega_{s}^{\nu}\left(  \omega-\psi\right)
dxdy+\int_{\mathbb{T}_{\alpha}}U_{n}^{\nu}\cdot\nabla\omega_{n}^{\nu}\left(
\omega-\psi\right)  dxdy\\
&  =I+II+III+IV.
\end{align*}
Similar to the proof of Lemma \ref{lemma-difference-NS-Euler}, the first term
can be estimated by%
\[
I\leq\nu\left(  -c_{0}\left\Vert \omega_{n}^{\nu}\right\Vert _{H^{1}}%
^{2}+C\left(  1+t\right)  \left\Vert \omega^{\nu}\right\Vert _{H^{1}%
}\left\Vert \omega^{0}\right\Vert _{H^{1}}\right)  .
\]
For the last term, noticing that as in the proof of Lemma
\ref{lemma-dissipation-NS-nonlinear}
\[
\int_{\mathbb{T}_{\alpha}}U_{n}^{\nu}\cdot\nabla\omega_{n}^{\nu}\left(
\omega_{n}^{\nu}-\psi_{n}^{\nu}\right)  dxdy=0,
\]
we have
\begin{align*}
IV  &  =-\int_{\mathbb{T}_{\alpha}}U_{n}^{\nu}\cdot\nabla\omega_{n}^{\nu
}\left(  \omega^{0}-\psi^{0}\right)  dxdy=\int_{\mathbb{T}_{\alpha}}U_{n}%
^{\nu}\cdot\nabla\left(  \omega^{0}-\psi^{0}\right)  \omega_{n}^{\nu}dxdy\\
&  \leq C\left\Vert U_{n}^{\nu}\right\Vert _{L^{4}}\left\Vert \omega
^{0}\right\Vert _{H^{1}}\left\Vert \omega_{n}^{\nu}\right\Vert _{L^{4}}\leq
C\left\Vert \omega_{n}^{\nu}\right\Vert _{L^{2}}\left\Vert \omega
^{0}\right\Vert _{H^{1}}\left\Vert \omega_{n}^{\nu}\right\Vert _{H^{1}}\\
&  \leq Cd\nu\left\Vert \omega^{0}\right\Vert _{H^{1}}\left\Vert \omega
_{n}^{\nu}\right\Vert _{H^{1}}.
\end{align*}
The second term is estimated by
\begin{align*}
II  &  =\int_{\mathbb{T}_{\alpha}}u_{s}^{\nu}\partial_{x}\omega_{n}^{\nu
}\left(  \psi_{n}^{\nu}-\omega^{0}+\psi^{0}\right)  dxdy\\
&  =-\int_{\mathbb{T}_{\alpha}}u_{s}^{\nu}\omega_{n}^{\nu}\left(  \partial
_{x}\psi_{n}^{\nu}-\partial_{x}\left(  \omega^{0}-\psi^{0}\right)  \right)
dxdy\\
&  \leq\left\Vert u_{s}^{\nu}\right\Vert _{L^{\infty}}\left\Vert \omega
_{n}^{\nu}\right\Vert _{L^{2}}\left\Vert v_{n}^{\nu}\right\Vert _{L^{2}%
}+\left\Vert u_{s}^{\nu}\right\Vert _{L^{\infty}}\left\Vert \omega_{n}^{\nu
}\right\Vert _{L^{2}}\left\Vert \omega^{0}\right\Vert _{H^{1}}\\
&  \leq C\left(  \left\Vert \omega_{s}^{\nu}\right\Vert _{L^{2}}\left\Vert
\omega_{n}^{\nu}\right\Vert _{H^{1}}^{2}+\left\Vert \omega_{s}^{\nu
}\right\Vert _{L^{2}}\left\Vert \omega_{n}^{\nu}\right\Vert _{H^{1}}\left\Vert
\omega^{0}\right\Vert _{H^{1}}\right) \\
&  \leq Cd\nu\left(  \left\Vert \omega_{n}^{\nu}\right\Vert _{H^{1}}%
^{2}+\left\Vert \omega_{n}^{\nu}\right\Vert _{H^{1}}\left\Vert \omega
^{0}\right\Vert _{H^{1}}\right)  ,
\end{align*}
where we use (\ref{estimate-shear-L2}) in the last inequality. Similarly,
\begin{align*}
III  &  =-\int_{\mathbb{T}_{\alpha}}U_{n}^{\nu}\cdot\nabla\left(  \omega
-\psi\right)  \omega_{s}^{\nu}dxdy\\
&  \leq C\left\Vert U_{n}^{\nu}\right\Vert _{L^{\infty}}\left(  \left\Vert
\omega_{n}^{\nu}\right\Vert _{H^{1}}+\left\Vert \omega^{0}\right\Vert _{H^{1}%
}\right)  \left\Vert \omega_{s}^{\nu}\right\Vert _{L^{2}}\\
&  \leq Cd\nu\left(  \left\Vert \omega_{n}^{\nu}\right\Vert _{H^{1}}%
^{2}+\left\Vert \omega^{0}\right\Vert _{H^{1}}\left\Vert \omega_{n}^{\nu
}\right\Vert _{H^{1}}\right)  ,
\end{align*}
where the embedding $\left\Vert U_{n}^{\nu}\right\Vert _{L^{\infty}}\leq
C\left\Vert \omega_{n}^{\nu}\right\Vert _{H^{1}}$ and the bound
(\ref{estimate-shear-L2}) are used in the last inequality above. Combing above
estimates, we get
\begin{align*}
\frac{d}{dt}\frac{1}{2}\left\Vert \omega^{\nu}-\omega^{0}\right\Vert _{X}^{2}
&  \leq\nu\left(  -\left(  c_{0}-Cd\right)  \left\Vert \omega_{n}^{\nu
}\right\Vert _{H^{1}}^{2}+C\left(  1+t\right)  \left\Vert \omega_{n}^{\nu
}\right\Vert _{H^{1}}\left\Vert \omega^{0}\right\Vert _{H^{1}}\right) \\
&  \leq\nu\left(  -\frac{1}{2}c_{0}\left\Vert \omega_{n}^{\nu}\right\Vert
_{H^{1}}^{2}+C\left(  1+t\right)  \left\Vert \omega_{n}^{\nu}\right\Vert
_{H^{1}}\left\Vert \omega^{0}\right\Vert _{H^{1}}\right) \\
&  \leq C\nu\left(  1+t^{2}\right)  \left\Vert \omega^{0}\left(  t\right)
\right\Vert _{H^{1}}^{2},
\end{align*}
by choosing $d\leq\frac{1}{2C}$.
\end{proof}

By using Lemmas \ref{lemma-dissipation-NS-nonlinear} and
\ref{lemma-difference-NS-non-shear}, Theorem \ref{thm-nonlinear} i) follows by
the same arguments as in the proof of Theorem \ref{thm-linearized}.

\subsection{The case of square torus}

Consider the nonlinear Navier-Stokes equation (\ref{eqn-NS-nonlinear}) on the
square torus $\mathbb{T}$, for initial data satisfying $P_{2}\omega\left(
0\right)  =0$. Compared with the rectangular torus, the new difficulty is the
existence of anomalous modes $\left\{  \cos x,\sin x\right\}  $ in the kernel
of $L=1+\Delta^{-1}$. We decompose the vorticity perturbation as
\[
\omega^{\nu}=\omega_{s}^{\nu}+\omega_{n}^{\nu}=\omega_{s1}^{\nu}+\omega
_{s2}^{\nu}+\omega_{n1}^{\nu}+\omega_{n2}^{\nu},
\]
where the shear part $\omega_{s}^{\nu}$ is decomposed as in the rectangular
case with $\omega_{s1}^{\nu}=P_{2}\omega_{s}^{\nu}$ and $\omega_{s2}^{\nu}\in
Y_{1}$, and the non-shear part $\omega_{n}^{\nu}$ is decomposed as
$\omega_{n1}^{\nu}+\omega_{n2}^{\nu}$ with $\omega_{n1}^{\nu}=P_{1}\omega
_{n}^{\nu}$ and $\omega_{n2}^{\nu}\in X_{1}$. Here, we recall that $P_{1}$ is
the projection to the anomalous space $W_{1}$ spanned by $\left\{  \cos x,\sin
x\right\}  $ and $X_{1}$ is the orthogonal complement of $W_{1}$ in $X$.
Correspondingly, the velocity $U^{\nu}\ $and stream function $\psi$ are
decomposed into four parts. Then the nonlinear term can be written as
\begin{align}
U^{\nu}\cdot\nabla\omega^{\nu} &  =\left(  U_{s1}^{\nu}+U_{n1}^{\nu}\right)
\cdot\nabla\left(  \omega_{n2}^{\nu}-\psi_{n2}^{\nu}\right)  +U_{n1}^{\nu
}\cdot\nabla\left(  \omega_{s2}^{\nu}-\psi_{s2}^{\nu}\right)
\label{nonlinear-term-decom-torus}\\
&  \ \ \ \ \ \ \ +U_{s2}^{\nu}\cdot\nabla\omega_{n2}^{\nu}+U_{n2}^{\nu}%
\cdot\nabla\omega_{s2}^{\nu}+U_{n2}^{\nu}\cdot\nabla\omega_{n2}^{\nu
},\nonumber
\end{align}
where we use the observation
\[
U_{s1}^{\nu}\cdot\nabla\omega_{n}^{\nu}+U_{n}^{\nu}\cdot\nabla\omega_{s2}%
^{\nu}=U_{s1}^{\nu}\cdot\nabla\left(  \omega_{n}^{\nu}-\psi_{n}^{\nu}\right)
=U_{s1}^{\nu}\cdot\nabla\left(  \omega_{n2}^{\nu}-\psi_{n2}^{\nu}\right)
\]
and similarly for other terms. The dissipation law (\ref{dissipation-coupled})
becomes
\begin{align}
\ \ \  &  \frac{d}{dt}\left(  \int_{\mathbb{T}}(|\omega_{s2}^{\nu}%
|^{2}-|\partial_{y}\psi_{s2}^{\nu}|^{2})dxdy+\left\Vert \omega_{n2}^{\nu
}\right\Vert _{X}^{2}\right)  \label{dissipation-coupled-torus}\\
&  =-2\nu\left(  \int_{\mathbb{T}}(|\partial_{y}\omega_{s2}^{\nu}|^{2}%
-|\omega_{s2}^{\nu}|^{2})dxdy+\left\Vert \omega_{n2}^{\nu}\right\Vert _{X^{1}%
}^{2}\right)  .\nonumber
\end{align}
By using the positivity of the above functional on $Y_{1}$ and $X_{1}$, this
implies that
\begin{equation}
\left\Vert \omega_{s2}^{\nu}\right\Vert _{L^{2}}\left(  t\right)  +\left\Vert
\omega_{n2}^{\nu}\right\Vert _{L^{2}}\leq C\left(  \left\Vert \omega_{s2}%
^{\nu}\right\Vert _{L^{2}}\left(  0\right)  +\left\Vert \omega_{n2}^{\nu
}\right\Vert \left(  0\right)  _{L^{2}}\right)  \leq Cd\nu
.\label{estimate-non-shear-non-anmous}%
\end{equation}
We estimate $\left\Vert \omega_{s1}^{\nu}\right\Vert _{L^{2}}\left(  t\right)
$ and $\left\Vert \omega_{n1}^{\nu}\right\Vert _{L^{2}}$ below. Denote
\[
\omega_{s1}^{\nu}=a_{1}\cos y+a_{2}\sin y,\ \omega_{n1}^{\nu}=b_{1}\cos
x+b_{2}\sin x,
\]
and
\[
a\left(  t\right)  =\left\Vert \omega_{s1}^{\nu}\right\Vert _{L^{2}}\left(
t\right)  =\sqrt{a_{1}^{2}+a_{2}^{2}},\ b\left(  t\right)  =\left\Vert
\omega_{n1}^{\nu}\right\Vert _{L^{2}}\left(  t\right)  =\sqrt{b_{1}^{2}%
+b_{2}^{2}}.
\]
Since $P_{2}\omega\left(  0\right)  =0$, we have $a\left(  0\right)  =0$. By
(\ref{nonlinear-term-decom-torus}), we have
\begin{align}
\partial_{t}\omega_{s}^{\nu} &  =\nu\partial_{yy}\omega_{s}^{\nu}+P_{0}\left(
U_{n}^{\nu}\cdot\nabla\omega_{n}^{\nu}\right)  \label{eqn-shear-ns-torus}\\
&  =\nu\partial_{yy}\omega_{s}^{\nu}+P_{0}\left(  U_{n1}^{\nu}\cdot
\nabla\left(  \omega_{n2}^{\nu}-\psi_{n2}^{\nu}\right)  +U_{n2}^{\nu}%
\cdot\nabla\omega_{n2}^{\nu}\right)  .\nonumber
\end{align}
Projecting above to $\left\{  \cos y,\sin y\right\}  $ and using
(\ref{estimate-non-shear-non-anmous}), we get
\[
\frac{da_{i}}{dt}\leq-\nu a_{i}\left(  t\right)  +Cd\nu\left(  b\left(
t\right)  +\left\Vert \omega_{n2}^{\nu}\right\Vert _{L^{2}}\right)  ,\ i=1,2,
\]
and thus%
\begin{equation}
\frac{da}{dt}\leq-\nu a\left(  t\right)  +Cd\nu\left(  b\left(  t\right)
+\left\Vert \omega_{n2}^{\nu}\right\Vert _{L^{2}}\right)  .\label{eqn-a-torus}%
\end{equation}
Projecting (\ref{eqn-NS-nonlinear}) to $\left\{  \cos x,\sin x\right\}  $ and
using (\ref{nonlinear-term-decom-torus}) (\ref{estimate-non-shear-non-anmous}%
), we get
\[
\frac{db_{i}}{dt}\leq-\nu b_{i}\left(  t\right)  +Cd\nu\left(  \left(
a\left(  t\right)  +b\left(  t\right)  \right)  +\left\Vert \omega_{n2}^{\nu
}\right\Vert _{L^{2}}\right)  ,\ i=1,2,
\]
and
\begin{equation}
\frac{db}{dt}\leq-\nu b\left(  t\right)  +Cd\nu\left(  \left(  a\left(
t\right)  +b\left(  t\right)  \right)  +\left\Vert \omega_{n2}^{\nu
}\right\Vert _{L^{2}}\right)  .\label{eqn-b}%
\end{equation}
Let $e\left(  t\right)  =a\left(  t\right)  +b\left(  t\right)  $, then the
combination of (\ref{eqn-a-torus}) and (\ref{eqn-b}) yields%
\begin{equation}
\frac{de}{dt}\leq-\nu\left(  1-Cd\right)  e\left(  t\right)  +Cd\nu\left\Vert
\omega_{n2}^{\nu}\right\Vert _{L^{2}}\leq-\frac{1}{2}\nu e\left(  t\right)
+Cd\nu\left\Vert \omega_{n2}^{\nu}\right\Vert _{L^{2}},\label{eqn-e}%
\end{equation}
by choosing $Cd\leq\frac{1}{2}$. Thus by (\ref{estimate-non-shear-non-anmous}%
), we have%
\begin{align}
\left\Vert \omega_{s1}^{\nu}\right\Vert _{L^{2}}\left(  t\right)  +\left\Vert
\omega_{n1}^{\nu}\right\Vert _{L^{2}} &  =e\left(  t\right)
\label{estimate-anomous-modes}\\
&  \leq e^{-\frac{1}{2}\nu t}e\left(  0\right)  +Cd\nu\int_{0}^{t}e^{-\frac
{1}{2}\nu\left(  t-s\right)  }\left\Vert \omega_{n2}^{\nu}\right\Vert _{L^{2}%
}\left(  s\right)  ds\leq Cd\nu.\nonumber
\end{align}
In the following lemma, we separate the dissipation law for $\omega_{n2}^{\nu
}$ from (\ref{dissipation-coupled-torus}).

\begin{lemma}
\label{lemma-dissipation-torus-non-anomous}There exists a constant $d>0$, such
that when $\left\Vert \omega^{\nu}\right\Vert _{L^{2}}\left(  0\right)  \leq
d\nu$, then
\begin{equation}
\frac{d}{dt}\left\Vert \omega_{n2}^{\nu}\right\Vert _{X}^{2}\leq-\nu\left\Vert
\omega_{n2}^{\nu}\right\Vert _{X^{1}}\left(  \left\Vert \omega_{n2}^{\nu
}\right\Vert _{X^{1}}-\left\Vert \omega_{n1}^{\nu}\right\Vert _{L^{2}}\right)
. \label{dissipation-w-n2}%
\end{equation}

\end{lemma}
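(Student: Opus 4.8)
The plan is to follow the template established by Lemma \ref{lemma-dissipation-non-shear} for the rectangular torus, starting from the coupled dissipation law (\ref{dissipation-coupled-torus}), but now carefully tracking the extra interaction terms involving the anomalous mode $\omega_{n1}^{\nu}$. First I would write down the evolution of $\int_{\mathbb{T}}(|\omega_{s2}^{\nu}|^{2}-|\partial_{y}\psi_{s2}^{\nu}|^{2})dxdy$ using the shear equation (\ref{eqn-shear-ns-torus}): differentiating gives $-2\nu\int_{\mathbb{T}}(|\partial_{y}\omega_{s2}^{\nu}|^{2}-|\omega_{s2}^{\nu}|^{2})dxdy$ plus a nonlinear contribution of the form $2\int_{\mathbb{T}}P_{0}\big(U_{n1}^{\nu}\cdot\nabla(\omega_{n2}^{\nu}-\psi_{n2}^{\nu})+U_{n2}^{\nu}\cdot\nabla\omega_{n2}^{\nu}\big)(\omega_{s2}^{\nu}-\psi_{s2}^{\nu})dxdy$. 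Subtracting this from (\ref{dissipation-coupled-torus}) isolates $\frac{d}{dt}\|\omega_{n2}^{\nu}\|_{X}^{2}$ and leaves an error integral which I would bound term by term.

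The key estimates would use: the bound $\|\omega_{s2}^{\nu}\|_{L^{2}}\le Cd\nu$ from (\ref{estimate-non-shear-non-anmous}) and $\|\omega_{n1}^{\nu}\|_{L^{2}}=b(t)\le Cd\nu$ from (\ref{estimate-anomous-modes}); the Sobolev embeddings $\|U_{n2}^{\nu}\|_{L^{4}}\le C\|\omega_{n2}^{\nu}\|_{L^{2}}$, $\|U_{n2}^{\nu}\|_{L^{\infty}}\le C\|\omega_{n2}^{\nu}\|_{H^{1}}$, and the fact that $U_{n1}^{\nu},\psi_{n2}^{\nu}$ are controlled by the $L^{2}$ norms of $\omega_{n1}^{\nu},\omega_{n2}^{\nu}$ respectively; and the positivity estimates (\ref{positive-H1}), (\ref{positivity-H1-y}), which identify $\|\cdot\|_{X^{1}}$ and $\int(|\partial_{y}\omega_{s2}^{\nu}|^{2}-|\omega_{s2}^{\nu}|^{2})$ with the $H^{1}$ norms on $X_{1}$, $Y_{1}$. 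Terms quadratic in $\omega_{n2}^{\nu}$ and its derivatives with a coefficient $\|\omega_{s2}^{\nu}\|_{L^{2}}$ or $\|\omega_{n1}^{\nu}\|_{L^{2}}$ contribute $Cd\nu\|\omega_{n2}^{\nu}\|_{X^{1}}^{2}$, which for $d$ small is absorbed into the negative term $-2\nu\|\omega_{n2}^{\nu}\|_{X^{1}}^{2}$. The genuinely new feature, absent in the rectangular case, is a cross term of the form $\nu\|\omega_{n2}^{\nu}\|_{X^{1}}\|\omega_{n1}^{\nu}\|_{L^{2}}$ coming from the interaction $U_{n1}^{\nu}\cdot\nabla(\omega_{n2}^{\nu}-\psi_{n2}^{\nu})$ — more precisely from integrating $U_{n1}^{\nu}\cdot\nabla\omega_{n2}^{\nu}$ against $\psi_{n2}^{\nu}$ after an integration by parts, which produces one derivative on $\omega_{n2}^{\nu}$ but only an $L^{2}$ factor of $\omega_{n1}^{\nu}$ (since $U_{n1}^{\nu}$ has no spectral gap to exploit). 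Collecting everything yields $\frac{d}{dt}\|\omega_{n2}^{\nu}\|_{X}^{2}\le -\nu(2-Cd)\|\omega_{n2}^{\nu}\|_{X^{1}}^{2}+C\nu\|\omega_{n2}^{\nu}\|_{X^{1}}\|\omega_{n1}^{\nu}\|_{L^{2}}$, and after choosing $d$ small enough to make $2-Cd\ge 1$ (and rescaling the constant in front of the cross term, which we are free to absorb into the definition of $\|\omega_{n1}^{\nu}\|_{L^{2}}$ up to the harmless constant $C$) we obtain the claimed inequality (\ref{dissipation-w-n2}).

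The main obstacle I anticipate is bookkeeping: the nonlinear term (\ref{nonlinear-term-decom-torus}) has five groups of interactions, and after contracting against $\omega_{n2}^{\nu}-\psi_{n2}^{\nu}$ (and against $\omega_{s2}^{\nu}-\psi_{s2}^{\nu}$ in the subtracted shear equation) each splits further; one must verify that the self-interaction $U_{n2}^{\nu}\cdot\nabla\omega_{n2}^{\nu}$ contracted against $\omega_{n2}^{\nu}$ vanishes (energy conservation of the transport nonlinearity, as in Lemma \ref{lemma-dissipation-NS-nonlinear}), that terms with $U_{s1}^{\nu}$ do not appear because $\omega_{s1}^{\nu}$ was killed by the combination into $\omega_{n2}^{\nu}-\psi_{n2}^{\nu}$, and that the only surviving term without a small factor $d\nu$ multiplying two derivatives is the $\|\omega_{n1}^{\nu}\|_{L^{2}}$-weighted cross term. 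Keeping the $\omega_{n1}^{\nu}$ contribution separate (rather than bounding it crudely by $Cd\nu$) is essential, since this is precisely the term that in the square-torus case one cannot afford to absorb — it is what forces the weaker, $\inf$-in-time conclusion (\ref{enhanced damping torus}) of Theorem \ref{thm-nonlinear} ii) rather than the pointwise-in-time decay of part i).
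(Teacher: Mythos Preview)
Your approach matches the paper's exactly: compute $\frac{d}{dt}\int_{\mathbb{T}}(|\omega_{s2}^{\nu}|^{2}-|\partial_{y}\psi_{s2}^{\nu}|^{2})\,dxdy$ from (\ref{eqn-shear-ns-torus}) and subtract from the coupled law (\ref{dissipation-coupled-torus}). One small correction worth noting: the cross term $\|\omega_{n1}^{\nu}\|_{L^{2}}\|\omega_{n2}^{\nu}\|_{X^{1}}$ does not arise from a pairing against $\psi_{n2}^{\nu}$ but from the pairing of $U_{n1}^{\nu}\cdot\nabla(\omega_{n2}^{\nu}-\psi_{n2}^{\nu})$ against $\omega_{s2}^{\nu}-\psi_{s2}^{\nu}$ on the shear side, and hence it too carries the small factor $\|\omega_{s2}^{\nu}\|_{L^{2}}\leq Cd\nu$; the single choice $Cd\leq 1$ then forces both the coefficient of $\|\omega_{n2}^{\nu}\|_{X^{1}}^{2}$ and the coefficient of the cross term to be at most $1$, so no separate ``rescaling'' of the constant in front of $\|\omega_{n1}^{\nu}\|_{L^{2}}$ is needed.
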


\begin{proof}
By (\ref{eqn-shear-ns-torus}), we have
\begin{align*}
&  \ \ \ \ \ \frac{d}{dt}\int_{\mathbb{T}}(|\omega_{s2}^{\nu}|^{2}%
-|\partial_{y}\psi_{s2}^{\nu}|^{2})dxdy\\
&  =2\int_{\mathbb{T}}\partial_{t}\omega_{s2}^{\nu}\left(  \omega_{s2}^{\nu
}-\psi_{s2}^{\nu}\right)  dxdy=2\int_{\mathbb{T}}\partial_{t}\omega_{s}^{\nu
}\left(  \omega_{s2}^{\nu}-\psi_{s2}^{\nu}\right)  dxdy\\
&  =-2\nu\int_{\mathbb{T}}(|\partial_{y}\omega_{s2}^{\nu}|^{2}-|\omega
_{s2}^{\nu}|^{2})dxdy\\
&  \ \ \ \ \ \ \ \ \ \ \ \ \ \ \ \ +2\int_{\mathbb{T}_{\alpha}}\left(
U_{n1}^{\nu}\cdot\nabla\left(  \omega_{n2}^{\nu}-\psi_{n2}^{\nu}\right)
+U_{n2}^{\nu}\cdot\nabla\omega_{n2}^{\nu}\right)  \left(  \omega_{s2}^{\nu
}-\psi_{s2}^{\nu}\right)  dxdy\\
&  \leq-2\nu\int_{\mathbb{T}_{\alpha}}(|\partial_{y}\omega_{s2}^{\nu}%
|^{2}-|\omega_{s2}^{\nu}|^{2})dxdy+C\left(  \left\Vert \omega_{n1}^{\nu
}\right\Vert _{L^{2}}\left\Vert \omega_{n2}^{\nu}\right\Vert _{X^{1}%
}+\left\Vert \omega_{n2}^{\nu}\right\Vert _{X^{1}}^{2}\right)  \left\Vert
\omega_{s2}^{\nu}\right\Vert _{L^{2}}\\
&  \leq-2\nu\int_{\mathbb{T}_{\alpha}}(|\partial_{y}\omega_{s2}^{\nu}%
|^{2}-|\omega_{s2}^{\nu}|^{2})dxdy+Cd\nu\left(  \left\Vert \omega_{n1}^{\nu
}\right\Vert _{L^{2}}\left\Vert \omega_{n2}^{\nu}\right\Vert _{X^{1}%
}+\left\Vert \omega_{n2}^{\nu}\right\Vert _{X^{1}}^{2}\right)  .
\end{align*}
By choosing $d$ such that $Cd\leq1$, (\ref{dissipation-w-n2}) follows from
above and (\ref{dissipation-coupled-torus}).
\end{proof}

Compared to the dissipation law (\ref{dissipation-law-square-non-shear}) for
the rectangular torus, from (\ref{dissipation-w-n2}) we cannot even infer that
$\left\Vert \omega_{n2}^{\nu}\right\Vert _{X}^{2}$ is decreasing due to the
interaction of $\omega_{n2}^{\nu}$ and $\omega_{n1}^{\nu}$. The dissipation
law (\ref{dissipation-coupled-torus}) only implies that
\begin{align*}
&  \int_{\mathbb{T}}(|\omega_{s2}^{\nu}|^{2}\left(  t\right)  -|\partial
_{y}\psi_{s2}^{\nu}|^{2}\left(  t\right)  )dxdy+\left\Vert \omega_{n2}^{\nu
}\left(  t\right)  \right\Vert _{X}^{2}\\
&  \leq\int_{\mathbb{T}}(|\omega_{s2}^{\nu}|^{2}\left(  0\right)
-|\partial_{y}\psi_{s2}^{\nu}|^{2}\left(  0\right)  )dxdy+\left\Vert
\omega_{n2}^{\nu}\left(  0\right)  \right\Vert _{X}^{2},
\end{align*}
while the shear term $\omega_{s2}^{\nu}$ cannot be separated from above.
Below, we show that $\left\Vert \omega_{n2}^{\nu}\left(  t\right)  \right\Vert
_{X}$ and $\left\Vert \omega_{n1}^{\nu}\right\Vert _{L^{2}}$ are controlled by
their initial values with a factor $e^{C\nu t}$, which is enough for proving
the enhanced damping.

\begin{lemma}
\label{lemma-growth-torus}There exists $d>0$, such that when $\left\Vert
\omega^{\nu}\right\Vert _{L^{2}}\left(  0\right)  \leq d\nu$, then
\begin{equation}
\left\Vert \omega_{n2}^{\nu}\left(  t\right)  \right\Vert _{X}^{2}+\left\Vert
\omega_{n1}^{\nu}\right\Vert _{L^{2}}\left(  t\right)  ^{2}+\left\Vert
\omega_{s1}^{\nu}\right\Vert _{L^{2}}^{2}\left(  t\right)  \leq e^{C_{1}\nu
t}\left\Vert \omega_{n}^{\nu}\left(  0\right)  \right\Vert _{L^{2}}%
^{2}\text{,} \label{inequality-growth-torus}%
\end{equation}
for some constant $C_{1}>0$.
\end{lemma}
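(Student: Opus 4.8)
The plan is to dominate the left side of (\ref{inequality-growth-torus}) by a single functional obeying a linear-in-$\nu$ Gr\"onwall inequality. Following the notation of the text, write $a(t)=\|\omega_{s1}^{\nu}\|_{L^{2}}(t)$, $b(t)=\|\omega_{n1}^{\nu}\|_{L^{2}}(t)$, $e(t)=a(t)+b(t)$, and set
\[
G(t)=\|\omega_{n2}^{\nu}(t)\|_{X}^{2}+e(t)^{2}.
\]
I would prove $\frac{d}{dt}G(t)\le C_{1}\nu\,G(t)$; granting this, Gr\"onwall gives $G(t)\le e^{C_{1}\nu t}G(0)$, and since $e(t)^{2}=(a+b)^{2}\ge a^{2}+b^{2}$ the quantity in (\ref{inequality-growth-torus}) is $\le G(t)$, while (as checked at the end) $G(0)\le\|\omega_{n}^{\nu}(0)\|_{L^{2}}^{2}$, which yields (\ref{inequality-growth-torus}). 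Throughout, $d$ is chosen small enough that Lemma \ref{lemma-dissipation-torus-non-anomous} and the inequality (\ref{eqn-e}) are in force (i.e. $Cd\le\tfrac12$); the a priori bounds (\ref{estimate-non-shear-non-anmous}) and (\ref{estimate-anomous-modes}) are what make these available for all $t>0$.

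First, I would bound $\frac{d}{dt}\|\omega_{n2}^{\nu}\|_{X}^{2}$ using Lemma \ref{lemma-dissipation-torus-non-anomous}. Writing $p=\|\omega_{n2}^{\nu}\|_{X^{1}}$, $q=\|\omega_{n1}^{\nu}\|_{L^{2}}$, the estimate there is $\frac{d}{dt}\|\omega_{n2}^{\nu}\|_{X}^{2}\le-\nu p(p-q)$, and since $p(p-q)\ge\tfrac12(p^{2}-q^{2})$ by Young's inequality,
\[
\frac{d}{dt}\|\omega_{n2}^{\nu}\|_{X}^{2}\le-\tfrac12\nu p^{2}+\tfrac12\nu q^{2}\le\tfrac12\nu\,b(t)^{2}\le\tfrac12\nu\,e(t)^{2}.
\]
Next, I would bound $\frac{d}{dt}e(t)^{2}$ by multiplying (\ref{eqn-e}) by $2e(t)\ge0$ and applying Young's inequality to the cross term:
\[
\frac{d}{dt}e(t)^{2}=2e\,\frac{de}{dt}\le-\nu e^{2}+2Cd\nu\,e\,\|\omega_{n2}^{\nu}\|_{L^{2}}\le C^{2}d^{2}\nu\,\|\omega_{n2}^{\nu}\|_{L^{2}}^{2}\le C'\nu\,\|\omega_{n2}^{\nu}\|_{X}^{2},
\]
the last step using the positivity estimate $\|\omega_{n2}^{\nu}\|_{X}^{2}\ge c_{0}\|\omega_{n2}^{\nu}\|_{L^{2}}^{2}$ on $X_{1}$ (the $\alpha=1$ analogue of (\ref{positive-L2}); see (\ref{positivity-L2-X1})). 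Adding the two displays gives $\frac{d}{dt}G(t)\le\tfrac12\nu e(t)^{2}+C'\nu\|\omega_{n2}^{\nu}(t)\|_{X}^{2}\le C_{1}\nu\,G(t)$ with $C_{1}=\max\{\tfrac12,C'\}$.

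For the initial value, note that $P_{2}\omega(0)=0$ and $\cos y,\sin y$ depend only on $y$, so $\omega_{s1}^{\nu}(0)=P_{2}\omega(0)=0$; hence $a(0)=0$ and $e(0)=b(0)=\|\omega_{n1}^{\nu}(0)\|_{L^{2}}$. Using $\|\omega_{n2}^{\nu}(0)\|_{X}^{2}=\|\omega_{n2}^{\nu}(0)\|_{L^{2}}^{2}-\|\nabla\psi_{n2}^{\nu}(0)\|_{L^{2}}^{2}\le\|\omega_{n2}^{\nu}(0)\|_{L^{2}}^{2}$ together with the $L^{2}$-orthogonality of the decomposition $\omega_{n}^{\nu}=\omega_{n1}^{\nu}+\omega_{n2}^{\nu}$,
\[
G(0)\le\|\omega_{n2}^{\nu}(0)\|_{L^{2}}^{2}+\|\omega_{n1}^{\nu}(0)\|_{L^{2}}^{2}=\|\omega_{n}^{\nu}(0)\|_{L^{2}}^{2},
\]
and the argument is complete as outlined in the first paragraph. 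The one genuine difficulty is the two-way coupling between $\omega_{n2}^{\nu}$ and the anomalous block $\omega_{n1}^{\nu}$: unlike the rectangular case (\ref{dissipation-law-square-non-shear}), the dissipation identity (\ref{dissipation-w-n2}) is not sign-definite, so $\|\omega_{n2}^{\nu}\|_{X}^{2}$ need not decay; the content of the lemma is precisely that the leakage between the blocks is of size $O(\nu)\times(\text{energy})$, hence over the window $[0,\tau/\nu]$ it costs only the bounded factor $e^{C_{1}\tau}$, and the two Young splittings above are what make this quantitative. A minor technical point, already implicit in the text's derivation of (\ref{eqn-e}), is that $a,b,e$ are square roots and need not be differentiable at their zeros, so the inequalities for $\frac{de}{dt}$ and $\frac{d}{dt}e^{2}$ should be read in the sense of upper Dini derivatives, which is all that Gr\"onwall requires.
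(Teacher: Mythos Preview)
Your proof is correct and follows essentially the same approach as the paper: define the functional $G(t)=\|\omega_{n2}^{\nu}\|_{X}^{2}+e(t)^{2}$, use Lemma~\ref{lemma-dissipation-torus-non-anomous} and (\ref{eqn-e}) together with Young's inequality to obtain $\frac{d}{dt}G\le C_{1}\nu G$, apply Gr\"onwall, and evaluate $G(0)$ using $a(0)=0$. The only differences from the paper are cosmetic (you get the constant $\tfrac12$ where the paper gets $\tfrac14$ from a slightly different Young splitting, and you spell out a bit more carefully why $G(0)\le\|\omega_{n}^{\nu}(0)\|_{L^{2}}^{2}$ and mention the Dini-derivative caveat).
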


\begin{proof}
It follows from (\ref{dissipation-w-n2}) that
\[
\frac{d}{dt}\left\Vert \omega_{n2}^{\nu}\right\Vert _{X}^{2}\leq\frac{1}{4}%
\nu\left\Vert \omega_{n1}^{\nu}\right\Vert _{L^{2}}^{2}.
\]
From (\ref{eqn-e}), we have
\[
\frac{de^{2}}{dt}\leq-\nu e\left(  t\right)  ^{2}+Cd\nu\left\Vert \omega
_{n2}^{\nu}\right\Vert _{X}e\left(  t\right)  \leq\frac{1}{4}\left(
Cd\right)  ^{2}\nu\left\Vert \omega_{n2}^{\nu}\right\Vert _{X}^{2}\text{. }%
\]
Combining above and denoting $C_{1}=\max\left\{  \frac{1}{4},\frac{1}%
{4}\left(  Cd\right)  ^{2}\right\}  $, then we have
\begin{align*}
\frac{d}{dt}\left(  \left\Vert \omega_{n2}^{\nu}\right\Vert _{X}^{2}+e\left(
t\right)  ^{2}\right)   &  \leq C_{1}\nu\left(  \left\Vert \omega_{n2}^{\nu
}\right\Vert _{X}^{2}+\left\Vert \omega_{n1}^{\nu}\right\Vert _{L^{2}}%
^{2}\right) \\
&  \leq C_{1}\nu\left(  \left\Vert \omega_{n2}^{\nu}\right\Vert _{X}%
^{2}+e\left(  t\right)  ^{2}\right)  .
\end{align*}
Since $\left\Vert \omega_{s1}^{\nu}\right\Vert _{L^{2}}\left(  0\right)  =0$,
we get
\begin{align*}
\left\Vert \omega_{n2}^{\nu}\right\Vert _{X}^{2}+e\left(  t\right)  ^{2}  &
\leq e^{C_{1}\nu t}\left(  \left\Vert \omega_{n2}^{\nu}\right\Vert _{X}%
^{2}\left(  0\right)  +\left\Vert \omega_{n1}^{\nu}\right\Vert _{L^{2}}%
^{2}\left(  0\right)  \right) \\
&  \leq e^{C_{1}\nu t}\left(  \left\Vert \omega_{n2}^{\nu}\right\Vert _{L^{2}%
}^{2}\left(  0\right)  +\left\Vert \omega_{n1}^{\nu}\right\Vert _{L^{2}}%
^{2}\left(  0\right)  \right)  =e^{C_{1}\nu t}\left\Vert \omega_{n}^{\nu
}\left(  0\right)  \right\Vert _{L^{2}}^{2}.
\end{align*}
This finishes the proof of the lemma.
\end{proof}

Our next lemma is to estimate the difference of $\omega_{n2}^{\nu}$ and the
solution of the linearized Euler equation.

\begin{lemma}
\label{lemma-difference-torus}There exists $d>0$ such that for any solution
$\omega^{\nu}\ $of the Navier-Stokes equation (\ref{eqn-non-shear-NS}) on
$\mathbb{T\ }$with initial data satisfying $\left\Vert \omega^{\nu}\right\Vert
_{L^{2}}\left(  0\right)  \leq d\nu$, and any solution $\omega^{0}$ of the
linearized Euler equation (\ref{eqn-linearized Euler}) with initial data in
$X_{1}\cap H^{1}$, we have
\begin{equation}
\frac{d}{dt}\left\Vert \omega_{n2}^{\nu}-\omega^{0}\right\Vert _{X}^{2}\leq
C_{0}\nu\left(  \left(  1+t^{2}\right)  \left\Vert \omega^{0}\left(  t\right)
\right\Vert _{H^{1}}^{2}+\left\Vert \omega_{n1}^{\nu}\right\Vert _{L^{2}}%
^{2}+\left\Vert \omega_{n1}^{\nu}\right\Vert _{L^{2}}\left\Vert \omega
^{0}\right\Vert _{H^{1}}\right)  ,\ \label{estimate-difference-torus}%
\end{equation}
$\forall t>0$, for some constant $C_{0}>0$. Here, $\omega_{n2}^{\nu}=\left(
I-P_{1}\right)  P_{\neq0}\omega^{\nu}$ is the non-shear part of $\omega^{\nu}$
with anomalous modes removed.
\end{lemma}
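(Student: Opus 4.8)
The approach is to repeat, for $\omega_{n2}^{\nu}=(I-P_{1})P_{\neq0}\omega^{\nu}$, the energy argument behind Lemmas \ref{lemma-difference-NS-Euler} and \ref{lemma-difference-NS-non-shear}, while carrying along the extra interaction terms produced by the anomalous modes through the decomposition (\ref{nonlinear-term-decom-torus}). First I would derive the equation for $\omega_{n2}^{\nu}$ by applying $(I-P_{1})$ to the non-shear Navier--Stokes equation (\ref{eqn-non-shear-NS}): since $P_{\neq0}$ commutes with $J$ and $L$, $P_{1}$ commutes with $\Delta$, and $JL\omega_{n1}^{\nu}=0$ because $\omega_{n1}^{\nu}\in W_{1}=\ker L$, one gets
\[
\partial_{t}\omega_{n2}^{\nu}=\nu\Delta\omega_{n2}^{\nu}+e^{-\nu t}A_{1}\omega_{n2}^{\nu}+(I-P_{1})P_{\neq0}\left(U^{\nu}\cdot\nabla\omega^{\nu}\right),\qquad A_{1}=(I-P_{1})JL.
\]
Let $\omega^{0}(t)\in X_{1}$ be the given solution of the (projected) linearized Euler equation $\partial_{t}\omega^{0}=A_{1}\omega^{0}$ (the flow for which (\ref{rage-torus}) holds), and put $\omega=\omega_{n2}^{\nu}-\omega^{0}$, $\psi=\psi_{n2}^{\nu}-\psi^{0}$; since $\Delta^{-1}$ preserves $X_{1}$, both $\omega,\psi\in X_{1}$. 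Then $\frac{d}{dt}\frac{1}{2}\left\Vert \omega\right\Vert _{X}^{2}=\left\langle L\omega,\omega_{t}\right\rangle =\int_{\mathbb{T}}(\omega-\psi)\,\omega_{t}\,dxdy$, and writing $e^{-\nu t}A_{1}\omega_{n2}^{\nu}-A_{1}\omega^{0}=e^{-\nu t}A_{1}\omega-(1-e^{-\nu t})A_{1}\omega^{0}$, the leading linear contribution is $e^{-\nu t}\left\langle L\omega,A_{1}\omega\right\rangle =e^{-\nu t}\left\langle L\omega,JL\omega\right\rangle =0$ (because $L\omega\in X_{1}$ and $J$ is anti-selfadjoint, as in Lemma \ref{lemma-spectra-rectangular}).

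It remains to bound the viscous-plus-$(1-e^{-\nu t})$ contribution and the nonlinear contribution. The former is handled verbatim as the term $I$ in the proof of Lemma \ref{lemma-difference-NS-non-shear} --- using $0\le1-e^{-\nu t}\le\nu t$, the positivity estimate (\ref{positivity-L2-X1}) together with its $H^{1}$ counterpart on $X_{1}$, and the growth bound of Lemma \ref{lemma-growth-euler-torus} --- giving $\nu\big(-c_{0}\left\Vert \omega_{n2}^{\nu}\right\Vert _{H^{1}}^{2}+C(1+t)\left\Vert \omega_{n2}^{\nu}\right\Vert _{H^{1}}\left\Vert \omega^{0}\right\Vert _{H^{1}}\big)$. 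For the nonlinear contribution, since $\omega-\psi\in X_{1}$ and $(I-P_{1})P_{\neq0}$ is the orthogonal projection onto $X_{1}$, I drop the projection and pair $U^{\nu}\cdot\nabla\omega^{\nu}$, in the five-term form (\ref{nonlinear-term-decom-torus}), against $\omega-\psi=(\omega_{n2}^{\nu}-\psi_{n2}^{\nu})-(\omega^{0}-\psi^{0})$. Pairing a given piece with $(\omega_{n2}^{\nu}-\psi_{n2}^{\nu})$ either vanishes by the divergence-free structure (the pieces built from $U_{s1}^{\nu}+U_{n1}^{\nu}$, since $\frac{1}{2}\int(U_{s1}^{\nu}+U_{n1}^{\nu})\cdot\nabla|\omega_{n2}^{\nu}-\psi_{n2}^{\nu}|^{2}=0$; and $U_{n2}^{\nu}\cdot\nabla\omega_{n2}^{\nu}$, using $U_{n2}^{\nu}\cdot\nabla\psi_{n2}^{\nu}=0$ and integration by parts, exactly as in Lemma \ref{lemma-dissipation-NS-nonlinear}), or --- for the pieces $U_{s2}^{\nu}\cdot\nabla\omega_{n2}^{\nu}$ and $U_{n2}^{\nu}\cdot\nabla\omega_{s2}^{\nu}$ --- is bounded, after integration by parts, by the Ladyzhenskaya estimates $\left\Vert U_{n2}^{\nu}\right\Vert _{L^{4}},\left\Vert \omega_{n2}^{\nu}\right\Vert _{L^{4}}\le C\left\Vert \omega_{n2}^{\nu}\right\Vert _{L^{2}}^{1/2}\left\Vert \omega_{n2}^{\nu}\right\Vert _{H^{1}}^{1/2}$ and the a priori smallness $\left\Vert \omega_{s1}^{\nu}\right\Vert _{L^{2}},\left\Vert \omega_{s2}^{\nu}\right\Vert _{L^{2}},\left\Vert \omega_{n2}^{\nu}\right\Vert _{L^{2}}\le Cd\nu$ coming from (\ref{estimate-non-shear-non-anmous})--(\ref{estimate-anomous-modes}), giving $Cd\nu\big(\left\Vert \omega_{n2}^{\nu}\right\Vert _{H^{1}}^{2}+\left\Vert \omega_{n2}^{\nu}\right\Vert _{H^{1}}\left\Vert \omega^{0}\right\Vert _{H^{1}}\big)$ --- precisely the estimates used for the terms $II,III,IV$ of Lemma \ref{lemma-difference-NS-non-shear}. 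The pairings with $-(\omega^{0}-\psi^{0})$ are treated the same way after one integration by parts.

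The one genuinely new term, which I expect to be the main obstacle, is $U_{n1}^{\nu}\cdot\nabla(\omega_{s2}^{\nu}-\psi_{s2}^{\nu})$: here $\omega_{s2}^{\nu}-\psi_{s2}^{\nu}$ has only an $L^{2}$-smallness bound, not an instantaneous $H^{1}$ bound, so one cannot afford to put a $y$-derivative on it. The resolution uses that $\omega_{n1}^{\nu}=b_{1}\cos x+b_{2}\sin x$ is independent of $y$, so $U_{n1}^{\nu}=\nabla^{\perp}\psi_{n1}^{\nu}=(0,v_{n1}(x))$ with $\left\Vert v_{n1}\right\Vert _{L^{\infty}}\le C\left\Vert \omega_{n1}^{\nu}\right\Vert _{L^{2}}$; thus $U_{n1}^{\nu}\cdot\nabla(\omega_{s2}^{\nu}-\psi_{s2}^{\nu})=v_{n1}(x)\,\partial_{y}(\omega_{s2}^{\nu}-\psi_{s2}^{\nu})$, and integration by parts in $y$ (legitimate because $v_{n1}$ is $y$-independent) transfers the $\partial_{y}$ onto $\omega_{n2}^{\nu}-\psi_{n2}^{\nu}$, respectively $\omega^{0}-\psi^{0}$, leaving only $L^{2}$ norms of $\omega_{s2}^{\nu}-\psi_{s2}^{\nu}$. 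This produces contributions $\le Cd\nu\left\Vert \omega_{n1}^{\nu}\right\Vert _{L^{2}}\left\Vert \omega_{n2}^{\nu}\right\Vert _{H^{1}}$ and $\le Cd\nu\left\Vert \omega_{n1}^{\nu}\right\Vert _{L^{2}}\left\Vert \omega^{0}\right\Vert _{H^{1}}$; the first is absorbed into $-\frac{c_{0}}{2}\nu\left\Vert \omega_{n2}^{\nu}\right\Vert _{H^{1}}^{2}$ by Young's inequality at the cost of a fixed multiple of $\nu\left\Vert \omega_{n1}^{\nu}\right\Vert _{L^{2}}^{2}$, and the second is kept. This is exactly why the terms $\left\Vert \omega_{n1}^{\nu}\right\Vert _{L^{2}}^{2}$ and $\left\Vert \omega_{n1}^{\nu}\right\Vert _{L^{2}}\left\Vert \omega^{0}\right\Vert _{H^{1}}$ appear on the right-hand side of (\ref{estimate-difference-torus}).

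Finally, I would collect all the bounds, choose $d$ small enough that every $Cd\nu\left\Vert \omega_{n2}^{\nu}\right\Vert _{H^{1}}^{2}$ is absorbed into the remaining portion of $-c_{0}\nu\left\Vert \omega_{n2}^{\nu}\right\Vert _{H^{1}}^{2}$, and complete the square in $\left\Vert \omega_{n2}^{\nu}\right\Vert _{H^{1}}$ (turning $C(1+t)\left\Vert \omega_{n2}^{\nu}\right\Vert _{H^{1}}\left\Vert \omega^{0}\right\Vert _{H^{1}}$ into $C(1+t^{2})\left\Vert \omega^{0}\right\Vert _{H^{1}}^{2}$), arriving at (\ref{estimate-difference-torus}). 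Beyond Lemma \ref{lemma-difference-NS-non-shear}, the only real work is the bookkeeping of the five interaction terms in (\ref{nonlinear-term-decom-torus}) and the integration-by-parts trick for the anomalous cross term above.
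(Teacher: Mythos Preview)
Your proposal is correct and follows essentially the same route as the paper: the same derivation of the equation for $\omega_{n2}^{\nu}$, the same splitting of $\frac{d}{dt}\tfrac{1}{2}\Vert\omega\Vert_X^2$ into the linear/viscous block (handled as in Lemma~\ref{lemma-difference-NS-non-shear}) plus the five nonlinear pieces from (\ref{nonlinear-term-decom-torus}), and---crucially---the same integration-by-parts treatment of the anomalous cross term $U_{n1}^{\nu}\cdot\nabla(\omega_{s2}^{\nu}-\psi_{s2}^{\nu})$ (the paper's term $IV$), which is exactly what produces the extra $\Vert\omega_{n1}^{\nu}\Vert_{L^2}^2$ and $\Vert\omega_{n1}^{\nu}\Vert_{L^2}\Vert\omega^{0}\Vert_{H^1}$ on the right. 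Your identification of $\omega^{0}$ as the solution of the \emph{projected} equation $\partial_t\omega^{0}=A_1\omega^{0}$ on $X_1$ is also what the paper actually uses in its computation (and in the subsequent application of (\ref{rage-torus})).
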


\begin{proof}
Denote $\omega=\omega_{n2}^{\nu}-\omega^{0}$ and $\psi=\psi_{n2}^{\nu}%
-\psi^{0}$, then
\begin{align*}
\omega_{t}  &  =-e^{-\nu t}\left(  I-P_{1}\right)  \sin y\partial_{x}%
(\omega-\psi)-(e^{-\nu t}-1)\left(  I-P_{1}\right)  \sin y\partial_{x}%
(\omega^{0}-\psi^{0})+\nu\bigtriangleup\omega_{n2}^{\nu}\\
&  \ \ \ \ \ \ \ \ \ \ +\left(  I-P_{1}\right)  P_{\neq0}\left(  \left(
U_{s1}^{\nu}+U_{n1}^{\nu}\right)  \cdot\nabla\left(  \omega_{n2}^{\nu}%
-\psi_{n2}^{\nu}\right)  \right)  +\left(  I-P_{1}\right)  \left(  U_{n1}%
^{\nu}\cdot\nabla\left(  \omega_{s2}^{\nu}-\psi_{s2}^{\nu}\right)  \right) \\
&  \ \ \ \ \ \ \ \ \ \ +\left(  I-P_{1}\right)  \left(  U_{s2}^{\nu}%
\cdot\nabla\omega_{n2}^{\nu}+U_{n2}^{\nu}\cdot\nabla\omega_{s2}^{\nu}\right)
+\left(  I-P_{1}\right)  P_{\neq0}\left(  U_{n2}^{\nu}\cdot\nabla\omega
_{n2}^{\nu}\right)  .
\end{align*}
So we have
\[
\ \ \ \frac{d}{dt}\frac{1}{2}\left\Vert \omega_{n2}^{\nu}-\omega
^{0}\right\Vert _{X}^{2}=\int_{\mathbb{T}_{\alpha}}\omega_{t}\left(
\omega-\psi\right)  dxdy
\]%
\begin{align*}
&  =\left[  -\int_{\mathbb{T}}(e^{-\nu t}-1)\sin y\partial_{x}(\omega^{0}%
-\psi^{0})\left(  \omega-\psi\right)  dxdy+\nu\int_{\mathbb{T}_{\alpha}%
}\bigtriangleup\omega_{n2}^{\nu}(\omega-\psi)dxdy\right] \\
&  \ \ \ \ \ \ +\left[  \int_{\mathbb{T}}\left(  U_{s2}^{\nu}\cdot\nabla
\omega_{n2}^{\nu}+U_{n2}^{\nu}\cdot\nabla\omega_{s2}^{\nu}\right)  \left(
\omega-\psi\right)  dxdy+\int_{\mathbb{T}}U_{n2}^{\nu}\cdot\nabla\omega
_{n2}^{\nu}\left(  \omega-\psi\right)  dxdy\right] \\
&  \ \ \ \ \ \ -\left[  \int_{\mathbb{T}}\left(  U_{s1}^{\nu}+U_{n1}^{\nu
}\right)  \cdot\nabla\left(  \omega_{n2}^{\nu}-\psi_{n2}^{\nu}\right)  \left(
\omega^{0}-\psi^{0}\right)  dxdy\right] \\
&  \ \ \ \ \ \ +\int_{\mathbb{T}}U_{n1}^{\nu}\cdot\nabla\left(  \omega
_{s2}^{\nu}-\psi_{s2}^{\nu}\right)  \left(  \omega-\psi\right)  dxdy\\
&  =I+II+III+IV.
\end{align*}

For the first three terms, as in the proof of Lemma
\ref{lemma-difference-NS-non-shear}, we get
\[
I+II+III\leq-\frac{1}{2}c_{0}\nu\left\Vert \omega_{n2}^{\nu}\right\Vert
_{H^{1}}^{2}+C\nu\left(  1+t\right)  \left\Vert \omega_{n2}^{\nu}\right\Vert
_{H^{1}}\left\Vert \omega^{0}\right\Vert _{H^{1}},
\]
by choosing $d>0$ small enough. The last term is estimated by%
\begin{align*}
IV  &  =-\int_{\mathbb{T}}U_{n1}^{\nu}\cdot\nabla\left[  \left(  \omega
_{n2}^{\nu}-\psi_{n2}^{\nu}\right)  -\left(  \omega^{0}-\psi^{0}\right)
\right]  \left(  \omega_{s2}^{\nu}-\psi_{s2}^{\nu}\right)  dxdy\\
&  \leq C\nu\left\Vert \omega_{n1}^{\nu}\right\Vert _{L^{2}}\left(  \left\Vert
\omega_{n2}^{\nu}\right\Vert _{H^{1}}+\left\Vert \omega^{0}\right\Vert
_{H^{1}}\right)  .
\end{align*}
Combining above, we have
\[
\frac{d}{dt}\left\Vert \omega_{n2}^{\nu}-\omega^{0}\right\Vert _{X}^{2}\leq
C_{0}\nu\left(  \left(  1+t^{2}\right)  \left\Vert \omega^{0}\right\Vert
_{H^{1}}^{2}+\left\Vert \omega_{n1}^{\nu}\right\Vert _{L^{2}}^{2}+\left\Vert
\omega_{n1}^{\nu}\right\Vert _{L^{2}}\left\Vert \omega^{0}\right\Vert _{H^{1}%
}\right)  .
\]

\end{proof}

We are now ready to prove Theorem \ref{thm-nonlinear} ii).

\begin{proof}
[Proof of Theorem \ref{thm-nonlinear} ii)]For any fixed $\delta,\tau>0$,
suppose that (\ref{enhanced damping torus}) is not true. Then
\begin{equation}
\left\Vert \omega_{n2}^{\nu}\left(  t\right)  \right\Vert _{L^{2}}\geq
\delta\left\Vert \omega_{n}^{\nu}\left(  0\right)  \right\Vert _{L^{2}%
},\ \forall\ \ 0\leq t\leq\frac{\tau}{\nu}, \label{assumption-contra-torus}%
\end{equation}
where $\omega^{\nu}\left(  t\right)  $ is the solution of
(\ref{eqn-NS-nonlinear}) with initial data $\omega^{\nu}\left(  0\right)  \in
L^{2}\left(  \mathbb{T}\right)  $ satisfying $P_{2}\omega^{\nu}\left(
0\right)  =0$ and $\left\Vert \omega^{\nu}\left(  0\right)  \right\Vert
_{L^{2}}\leq d\nu$. Here, $d$ is a constant chosen such that Lemmas
\ref{lemma-dissipation-torus-non-anomous}, \ref{lemma-growth-torus} and
\ref{lemma-difference-torus} hold true. Then by (\ref{inequality-growth-torus}%
) and (\ref{assumption-contra-torus}), for any $t,t^{\prime}\in\left(
0,\frac{\tau}{\nu}\right)  ,$ we have
\begin{align}
\left\Vert \omega_{n1}^{\nu}\left(  t\right)  \right\Vert _{L^{2}}  &  \leq
e^{\frac{1}{2}C_{1}\tau}\left\Vert \omega_{n}^{\nu}\left(  0\right)
\right\Vert _{L^{2}}^{2}\leq\frac{1}{\delta}e^{\frac{1}{2}C_{1}\tau}\left\Vert
\omega_{n2}^{\nu}\left(  t^{\prime}\right)  \right\Vert _{L^{2}}%
\label{inequa-w-n1}\\
&  \leq\frac{c_{0}}{\delta}e^{\frac{1}{2}C_{1}\tau}\left\Vert \omega_{n2}%
^{\nu}\left(  t^{\prime}\right)  \right\Vert _{X}.\nonumber
\end{align}
Choose $\lambda_{N}$ to be big enough such that%
\[
\exp\left(  -\left(  \frac{\lambda_{N}}{2}-\sqrt{\frac{\lambda_{N}}{2}}%
\frac{c_{0}}{\delta}e^{\frac{1}{2}C_{1}\tau}\right)  \tau\right)  <c_{0}%
\delta^{2},
\]
where $C_{1},c_{0}$ are the constants in (\ref{inequality-growth-torus}) and
(\ref{positivity-L2-X1}) respectively. Suppose that $\omega_{n2}^{\nu}\left(
t\right)  \in R\left(  I-P_{N}\right)  $, that is,
\[
\left\Vert \omega_{n2}^{\nu}\left(  t\right)  \right\Vert _{X^{1}}^{2}%
>\lambda_{N}\left\Vert \omega_{n2}^{\nu}\left(  t\right)  \right\Vert _{X}%
^{2},
\]
is true for $t$ in some interval $\left(  a,b\right)  \subset\left(
0,\tau/\nu\right)  $. Then by (\ref{dissipation-w-n2}) and (\ref{inequa-w-n1}%
), we have
\begin{equation}
\left\Vert \omega^{\nu}\left(  b\right)  \right\Vert _{X}^{2}\leq\exp\left(
-\nu\sqrt{\lambda_{N}}\left(  \sqrt{\lambda_{N}}-\frac{c_{0}}{\delta}%
e^{\frac{1}{2}C_{1}\tau}\right)  \left(  b-a\right)  \right)  \left\Vert
\omega^{\nu}\left(  a\right)  \right\Vert _{X}^{2}.
\label{decay-high-frequency}%
\end{equation}
Denote $t_{1}=T_{c}(N,\frac{1}{10},)$ to be such that the RAGE Lemma
(\ref{rage-torus}) is true for $\kappa=1/10$ and all $T\geq T_{c}$. For any
$t_{0}\in\left(  0,\tau/\nu\right)  $ satisfying
\[
\left\Vert \omega_{n2}^{\nu}\left(  t_{0}\right)  \right\Vert _{X^{1}}^{2}%
\leq\lambda_{N}\left\Vert \omega_{n2}^{\nu}\left(  t_{0}\right)  \right\Vert
_{X}^{2},
\]
let $\omega^{0}\left(  t\right)  $ $\left(  t\in\left[  t_{0},t_{0}%
+t_{1}\right]  \right)  \ $be the solution of (\ref{eqn-linearized Euler})
with $\omega^{0}\left(  t_{0}\right)  =\omega_{n2}^{\nu}\left(  t_{0}\right)
$. By (\ref{estimate-difference-torus}), (\ref{inequa-w-n1}) and Lemma
\ref{lemma-growth-torus}, we have
\begin{align}
\left\Vert \omega_{n2}^{\nu}\left(  t_{0}+t\right)  -\omega^{0}\left(
t_{0}+t\right)  \right\Vert _{X}^{2}  &  \leq C_{2}\nu{\Large \{}\left(
1+t^{5}\right)  \left\Vert \omega_{n2}^{\nu}\left(  t_{0}\right)  \right\Vert
_{X^{1}}^{2}+\left(  \frac{c_{0}}{\delta}\right)  ^{2}e^{C_{1}\tau}\left\Vert
\omega_{n2}^{\nu}\left(  t_{0}\right)  \right\Vert _{X}^{2}%
\label{estimate-difference-int-torus}\\
&  \ \ \ +\frac{c_{0}}{\delta}e^{\frac{1}{2}C_{1}\tau}\left(  1+t^{2}\right)
\left\Vert \omega_{n2}^{\nu}\left(  t_{0}\right)  \right\Vert _{X}\left\Vert
\omega_{n2}^{\nu}\left(  t_{0}\right)  \right\Vert _{X^{1}}{\LARGE \}}%
,\ \nonumber
\end{align}
for any $t\in\left[  0,t_{1}\right]  $ and some constant $C_{2}>0$ independent
of $\tau,\nu$. Let $\nu\left(  \delta,\tau\right)  $ be such that
\[
C_{2}\nu\left(  \delta,\tau\right)  \left\{  \left(  1+t_{1}^{5}\right)
\lambda_{N}+\left(  \frac{c_{0}}{\delta}\right)  ^{2}e^{C_{1}\tau}+\frac
{c_{0}}{\delta}e^{\frac{1}{2}C_{1}\tau}\left(  1+t_{1}^{2}\right)
\sqrt{\lambda_{N}}\right\}  <\frac{1}{10}.
\]
By (\ref{estimate-difference-int-torus}), when $0<\nu<\nu\left(  \delta
,\tau\right)  $, we have
\[
\left\Vert \omega_{n2}^{\nu}\left(  t\right)  -\omega^{0}\left(  t\right)
\right\Vert _{X}^{2}\leq\frac{1}{10}\left\Vert \omega_{n2}^{\nu}\left(
t_{0}\right)  \right\Vert _{X}^{2},\ \forall\ t\in\left[  t_{0},t_{0}%
+t_{1}\right]  .
\]
Then as in the proof of Theorem \ref{thm-linearized}, by using the RAGE
theorem for $\omega^{0}\left(  t\right)  $, we obtain%
\[
\int_{t_{0}}^{t_{0}+t_{1}}\left\Vert \omega_{n2}^{\nu}\left(  t\right)
\right\Vert _{X^{1}}^{2}dt\geq\frac{\lambda_{N}t_{1}}{2}\left\Vert \omega
_{n2}^{\nu}\left(  t_{0}\right)  \right\Vert _{X}^{2}.
\]
Thus by (\ref{dissipation-w-n2}) and (\ref{inequa-w-n1}), we have
\begin{align}
\left\Vert \omega_{n2}^{\nu}\left(  t_{0}+t_{1}\right)  \right\Vert _{X}^{2}
&  \leq\left\Vert \omega_{n2}^{\nu}\left(  t_{0}\right)  \right\Vert _{X}%
^{2}-\nu\int_{t_{0}}^{t_{0}+t_{1}}\left\Vert \omega_{n2}^{\nu}\right\Vert
_{X^{1}}\left(  \left\Vert \omega_{n2}^{\nu}\right\Vert _{X^{1}}-\left\Vert
\omega_{n1}^{\nu}\right\Vert _{L^{2}}\right)  dt\label{decay-low-frequency}\\
&  \leq\left\Vert \omega_{n2}^{\nu}\left(  t_{0}\right)  \right\Vert _{X}%
^{2}-\nu\int_{t_{0}}^{t_{0}+t_{1}}\left\Vert \omega_{n2}^{\nu}\right\Vert
_{X^{1}}^{2}dt\nonumber\\
&  \ \ \ \ \ \ \ \ \ +\nu\left(  \int_{t_{0}}^{t_{0}+t_{1}}\left\Vert
\omega_{n2}^{\nu}\right\Vert _{X^{1}}^{2}dt\right)  ^{\frac{1}{2}}\sqrt{t_{1}%
}\frac{c_{0}}{\delta}e^{\frac{1}{2}C_{1}\tau}\left\Vert \omega_{n2}^{\nu
}\left(  t_{0}\right)  \right\Vert _{X}\nonumber\\
&  \leq\left\Vert \omega_{n2}^{\nu}\left(  t_{0}\right)  \right\Vert _{X}%
^{2}\left(  1-\frac{\lambda_{N}}{2}+\sqrt{\frac{\lambda_{N}}{2}}\frac{c_{0}%
}{\delta}e^{\frac{1}{2}C_{1}\tau}\right)  \nu t_{1}.\nonumber\\
&  \leq\left\Vert \omega_{n2}^{\nu}\left(  t_{0}\right)  \right\Vert _{X}%
^{2}\exp\left(  -\left(  \frac{\lambda_{N}}{2}-\sqrt{\frac{\lambda_{N}}{2}%
}\frac{c_{0}}{\delta}e^{\frac{1}{2}C_{1}\tau}\right)  \nu t_{1}\right)
.\nonumber
\end{align}
Splitting the interval $\left[  0,\frac{\tau}{\nu}\right]  $ into a union of
intervals such that either (\ref{decay-high-frequency}) or
(\ref{decay-low-frequency}) is true, then we have
\begin{align*}
\left\Vert \omega_{n2}^{\nu}\left(  \frac{\tau}{\nu}\right)  \right\Vert
_{X}^{2}  &  \leq\exp\left(  -\left(  \frac{\lambda_{N}}{2}-\sqrt
{\frac{\lambda_{N}}{2}}\frac{c_{0}}{\delta}e^{\frac{1}{2}C_{1}\tau}\right)
\tau\right)  \left\Vert \omega_{n2}^{\nu}\left(  0\right)  \right\Vert
_{X}^{2}\\
&  <c_{0}\delta^{2}\left\Vert \omega_{n2}^{\nu}\left(  0\right)  \right\Vert
_{X}^{2}\text{.}%
\end{align*}
This implies that
\[
\left\Vert \omega_{n2}^{\nu}\left(  \frac{\tau}{\nu}\right)  \right\Vert
_{L^{2}}<\delta\left\Vert \omega_{n2}^{\nu}\left(  0\right)  \right\Vert
_{L^{2}}\leq\delta\left\Vert \omega_{n}^{\nu}\left(  0\right)  \right\Vert
_{L^{2}}\text{,}%
\]
which is in contradiction to the assumption (\ref{assumption-contra-torus}).
This finishes the proof of Theorem \ref{thm-nonlinear} ii).
\end{proof}

\begin{remark}
\label{remark-repeat-decay}By (\ref{estimate-non-shear-non-anmous}) and
(\ref{estimate-anomous-modes}), we have the following Liapunov stability
result
\begin{equation}
\left\Vert \omega\left(  t\right)  \right\Vert _{L^{2}}\leq C\left\Vert
\left(  I-P_{2}\right)  \omega\left(  0\right)  \right\Vert _{L^{2}},\ \forall
t>0, \label{Liapunov-stability}%
\end{equation}
for some constant $C>0$ and any solution $\omega\left(  t\right)  $ of the NS
equation (\ref{eqn-NS-nonlinear}). Thus for initial data $\omega\left(
0\right)  $ satisfying
\begin{equation}
\left\Vert \left(  I-P_{2}\right)  \omega\left(  0\right)  \right\Vert
_{L^{2}}\leq\frac{1}{C}d\nu, \label{condition-initial-further}%
\end{equation}
we can repeatedly use Theorem \ref{thm-nonlinear} ii) to get the rapid decay
of the non-shear part with anomalous modes removed, before the dissipation
term takes over. The same remark applies to the rectangular torus case to get
the rapid damping of the non-shear part.
\end{remark}

\begin{remark}
In Theorem \ref{thm-nonlinear} ii), the non-shear part removing the anomalous
modes is reduced to a factor $\delta$ of the initial norm of the whole
non-shear part. This is different from the result
(\ref{enhanced damping torus-linear}) for the linearized NS equation on a
square torus, where the anomalous modes can be separated. The nonlinear
coupling due to the anomalous modes can be seen from the term $U_{n1}^{\nu
}\cdot\nabla\left(  \omega_{s2}^{\nu}-\psi_{s2}^{\nu}\right)  $ in
(\ref{nonlinear-term-decom-torus}), which reflects the nonlinear interaction
of the anomalous modes and the shear part. For the rectangular torus, there is
no such interaction term and the nonlinear enhanced damping result is
consistent with that for the linearized NS\ equation.
\end{remark}

\subsection{Further issues and dipole states}

We comment on some further issues. First, it would be very interesting to
enlarge the metastability basin from $O\left(  \nu\right)  $ in Theorem
\ref{thm-nonlinear} to be $O\left(  \nu^{\alpha}\right)  $ $\left(
0<\alpha<1\right)  $ or independent of $\nu$ if possible. Also, it is
desirable to improve the decay time scale from $O\left(  \tau/\nu\right)  $ to
$O\left(  1/\sqrt{\nu}\right)  $ as given in \cite{beck-wayne} for the
approximated linearized equation (\ref{LNS-appro}). This might require us to
work on initial data of higher regularity. We note that the time scale
$O\left(  1/\sqrt{\nu}\right)  $ in \cite{beck-wayne} was obtained for initial
data in $H^{1}$.

Numerical simulations (\cite{bouchet-simmonet09}) suggest that on the
rectangular torus the bar states (i.e. Kolmogorov flows) are usually observed.
However, on the square torus (\cite{yin-et-final state}), the dipole states of
the form $\omega_{0}=\cos x+\cos y$ or $\sin x+\sin y\ $appear more often than
the bar states. These dipole states are also maximum entropy solutions of the
2D Euler equation, and hence likely candidates for relevant quasi-stationary
states by the statistical approaches of 2D turbulence (e.g.
\cite{robert-survey}). The dipole states represent nonparallel flows with
saddle points on the stream lines and therefore are more difficult to study.
Consider the dipole with $\omega_{0}=\cos x+\cos y$, then the quasi-stationary
Navier-Stokes solution is $\omega^{\nu}\left(  t,x,y\right)  =e^{-\nu
t}\left(  \cos x+\cos y\right)  $. The linearized NS equation around it
becomes
\begin{equation}
\partial_{t}\omega=\nu\Delta\omega+e^{-\nu t}\left[  \left(  \sin
y\partial_{x}-\sin x\partial_{y}\right)  \left(  1+\Delta^{-1}\right)
\right]  \omega. \label{linearized-NS-dipole}%
\end{equation}
There are some similarities with the linearized equation (\ref{eqn-bar-LNS})
near bar states. First, the same dissipation law (\ref{dissipation-law-NS})
holds true for (\ref{linearized-NS-dipole}). The linearized Euler operator is
of the Hamiltonian form
\begin{equation}
\left(  \sin y\partial_{x}-\sin x\partial_{y}\right)  \left(  1+\Delta
^{-1}\right)  =JL,\ \label{defn-JL-dipole}%
\end{equation}
with
\[
J=\sin y\partial_{x}-\sin x\partial_{y},\ \ L=1+\Delta^{-1}.
\]
Consider the energy space $X\ $to be the set of $L^{2}$ functions with zero
mean. Define $P$ to be the projection of $L^{2}$ to $\ker J$. It was shown in
\cite{lin-cmp-04} that for any $\phi\in L^{2}$,
\begin{equation}
P\phi\ |_{\gamma_{i}\left(  c\right)  }=\frac{\oint_{\gamma_{i}\left(
c\right)  }\frac{\phi\left(  x,y\right)  }{\left\vert \nabla\psi
_{0}\right\vert }dl}{\oint_{\gamma_{i}\left(  c\right)  }\frac{1}{\left\vert
\nabla\psi_{0}\right\vert }dl}, \label{formula-projection}%
\end{equation}
where $c$ is in the range of $\psi_{0}=\cos x+\cos y$ and $\gamma_{i}\left(
c\right)  $ is a branch of $\left\{  \psi_{0}=c\right\}  $. Define the
operator $A,\ A_{0}:H^{2}\cap X\rightarrow X$ by
\[
A=-\Delta-1+P,\ \ \ A_{0}=-\Delta-1.
\]
We note that $A\geq A_{0}\geq0$ and
\[
\ker A_{0}=\ker L=\left\{  \cos x,\sin x,\cos y,\sin y\right\}  .
\]
Therefore $\ker A\subset\ker A_{0}$ and by Proposition 2.8 and Lemma 11.3 of
\cite{lin-zeng-hamiltonian}, we have the decomposition
\begin{equation}
L^{2}=\ker\left(  JL\right)  +R\left(  J\right)  . \label{decom-L2-dipole}%
\end{equation}
Here, $\ker\left(  JL\right)  \cap R\left(  J\right)  \subset\ker L$ and
$\ker\left(  JL\right)  ,\ R\left(  J\right)  $ are both invariant under $JL$.
The space $\ker\left(  JL\right)  $ corresponds to the steady solution space
of the linearized Euler equation $\partial_{t}\omega=JL\omega$. Different from
the case of bar states where $\ker\left(  JL\right)  $ is the space of shear
flows, for the dipole states the steady space $\ker\left(  JL\right)  $ has a
more complicated structure. We can restrict the Euler semigroup $e^{tJL}$ on
the invariant subspace $R\left(  J\right)  $. Denote $P_{3}$ to be the
orthogonal projection of $L^{2}\left(  \mathbb{T}\right)  \ $to
\[
\ker L=\left\{  \cos x,\sin x,\cos y,\sin y\right\}  .
\]
We have the following RAGE type result for $JL$ on $R\left(  J\right)  $.

\begin{proposition}
\label{prop-rage-dipole}Suppose that the operator $JL$ defined in
(\ref{defn-JL-dipole}) has no nonzero purely imaginary eigenvalues. Let $B$ be
any compact operator in $L^{2}\left(  \mathbb{T}\right)  $. Then for any
$\omega\left(  0\right)  \in R\left(  J\right)  $, we have
\begin{equation}
\frac{1}{T}\int_{0}^{T}\left\Vert B\left(  I-P_{3}\right)  e^{itL}%
\omega\left(  0\right)  \right\Vert _{L^{2}}^{2}dt\rightarrow0,\ \text{when
}T\rightarrow\infty. \label{RAGE-dipole}%
\end{equation}

\end{proposition}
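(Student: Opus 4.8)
The plan is to run the strategy of Section~2.2 for the bar state on the square torus, with $P_{1}$ there replaced by $P_{3}$ here. Write $L=1+\Delta^{-1}$ and $J=\sin y\,\partial_{x}-\sin x\,\partial_{y}$, and note $R(J)\subset X$ (the zero–mean subspace, as in this subsection's convention). The first ingredient is a spectral computation for $L$ on $X$: its eigenvalues are $1-(m^{2}+n^{2})^{-1}$, $(m,n)\neq(0,0)$, so $L\ge 0$, $\ker L=\operatorname{span}\{\cos x,\sin x,\cos y,\sin y\}=R(P_{3})$, and $L\ge\tfrac12$ on $V:=(I-P_{3})X$. Hence $[\,\cdot,\cdot\,]:=\langle L\cdot,\cdot\rangle$ is an inner product on $V$ equivalent to the $L^{2}$ one.

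Since $\ker L\subset\ker(JL)$ (so $JLP_{3}=0$), for $\omega(0)\in R(J)$ the projected solution satisfies a closed equation: $(I-P_{3})e^{tJL}\omega(0)=e^{tA_{1}}(I-P_{3})\omega(0)$ with $A_{1}:=(I-P_{3})JL:V\to V$. Exactly as in Lemma~\ref{lemma-continuous-A-torus}, $A_{1}$ is anti–selfadjoint on $(V,[\,\cdot,\cdot\,])$: for $w_{1},w_{2}\in V$, using $LP_{3}=0$, $Lw_{i}\perp\ker L$ and that $J$ is anti–selfadjoint, $[A_{1}w_{1},w_{2}]=\langle JLw_{1},Lw_{2}\rangle=-\langle Lw_{1},JLw_{2}\rangle=-[w_{1},A_{1}w_{2}]$. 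Thus $e^{tA_{1}}$ is a unitary group on $(V,[\,\cdot,\cdot\,])$, $\|(I-P_{3})e^{tJL}\omega(0)\|_{L^{2}}$ is bounded uniformly in $t$, and the Borel functional calculus is available.

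Next I would determine $\sigma_{pp}(A_{1})$. If $A_{1}w=\mu w$ with $0\neq w\in V$, $\mu\in i\mathbf{R}\setminus\{0\}$, set $\widetilde w:=JLw-\mu w=P_{3}JLw\in\ker L\subset\ker(JL)$; then $JL\bigl(w+\mu^{-1}\widetilde w\bigr)=\mu\bigl(w+\mu^{-1}\widetilde w\bigr)$, and $w+\mu^{-1}\widetilde w\neq0$ because $w\in V$, $\widetilde w\in\ker L$ and $V\cap\ker L=\{0\}$ — contradicting the hypothesis that $JL$ has no nonzero purely imaginary eigenvalue. So $\sigma_{pp}(A_{1})\subset\{0\}$, and on the $[\,\cdot,\cdot\,]$–orthogonal complement of $\ker A_{1}$ the operator $A_{1}$ has purely continuous spectrum. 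Decomposing $(I-P_{3})\omega(0)=\phi_{0}+\phi_{c}$ with $\phi_{0}\in\ker A_{1}$ and $\phi_{c}$ in the continuous subspace, the RAGE theorem of the Introduction, applied with the compact operator $B|_{V}\colon(V,[\,\cdot,\cdot\,])\to L^{2}(\mathbb{T})$, gives $\frac1T\int_{0}^{T}\|Be^{tA_{1}}\phi_{c}\|_{L^{2}}^{2}\,dt\to0$; since $e^{tA_{1}}\phi_{0}=\phi_{0}$, the proposition reduces to showing $\phi_{0}=0$.

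This last step is where I expect the genuine difficulty to be. It would follow from two facts. (a) For every $w\in\ker(JL)$, writing $\omega(0)=J\omega_{*}$, one has $\langle\omega(0),Lw\rangle=-\langle\omega_{*},JLw\rangle=0$; equivalently $(I-P_{3})\omega(0)$ is $[\,\cdot,\cdot\,]$–orthogonal to $(I-P_{3})\ker(JL)$ (an $L^{2}$ computation, since $L(I-P_{3})=L$ on $(\ker L)^{\perp}$). (b) The zero eigenspace, restricted to the reducing invariant subspace $(I-P_{3})\overline{R(J)}\ni\phi_{0}$, is no larger than $(I-P_{3})\ker(JL)$; equivalently, $JL$ admits no Jordan chain of length $\ge2$ at $0$ inside $R(J)$, i.e.\ $w\in\overline{R(J)}$ with $JLw\in\ker L$ forces $JLw=0$. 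Granting (b), (a) yields $[\phi_{0},\phi_{0}]=[(I-P_{3})\omega(0),\phi_{0}]=0$, hence $\phi_{0}=0$, completing the proof. Fact (b) is the analogue, for the nonparallel dipole, of the assertion in Lemma~\ref{lemma-continuous-A-torus} that $0$ is not an eigenvalue of $A_{1}$, but now the underlying stream–function regularity argument must be pushed through near the saddle points of $\psi_{0}=\cos x+\cos y$; it is natural to deduce it from the structure of $\ker(JL)$ encoded in the projection formula (\ref{formula-projection}) and the decomposition (\ref{decom-L2-dipole}) of \cite{lin-zeng-hamiltonian}. One should also bear in mind that the standing hypothesis — absence of nonzero purely imaginary eigenvalues of $JL$ — is assumed here rather than established.
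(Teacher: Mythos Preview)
Your overall architecture matches the paper's: reduce to the anti-selfadjoint operator $A_{1}=(I-P_{3})JL$ on a space where $L>0$, eliminate nonzero imaginary eigenvalues by the same lift-to-$JL$ trick as in Lemma~\ref{lemma-continuous-A-torus}, and then invoke RAGE. The paper streamlines one step you make more elaborate: it works from the outset on $X_{1}=(I-P_{3})R(J)$ rather than on all of $V=(I-P_{3})X$, so there is no need for your decomposition $\phi_{0}+\phi_{c}$ and the orthogonality argument~(a); one simply shows $A_{1}$ has \emph{no} eigenvalue on $X_{1}$ and applies RAGE directly.

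The genuine gap in your proposal is fact~(b), which you correctly flag as the crux but do not prove. The paper supplies a concrete argument, and it does not come from the abstract decomposition~(\ref{decom-L2-dipole}) alone; it uses specific features of the dipole. If $0\neq\omega\in X_{1}$ satisfies $(I-P_{3})JL\omega=0$, set $\psi_{1}=L\omega$, so $J\psi_{1}\in\ker L$, say $J\psi_{1}=a_{1}\cos x+a_{2}\cos y+b_{1}\sin x+b_{2}\sin y$. Since $\cos x+\cos y\in\ker J$ one gets $a_{1}+a_{2}=0$. The level sets $\{\psi_{0}=c\}$ are symmetric under $(x,y)\leftrightarrow(y,x)$, so by the projection formula~(\ref{formula-projection}) every element of $\ker J$ is symmetric in $x,y$; hence $\cos x-\cos y\perp\ker J$, i.e.\ $\cos x-\cos y\in R(J)$, say $J\psi_{2}=\cos x-\cos y$ with $\psi_{2}$ doubly periodic. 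Using also $Jx=\sin y$ and $Jy=-\sin x$, one obtains $J(\psi_{1}-a\psi_{2}+b_{1}y-b_{2}x)=0$. By~(\ref{formula-projection}) this function is constant on each branch of $\{\psi_{0}=c\}$, and since $\psi_{0}$ is doubly periodic the function must be as well---impossible because of the linear term $b_{1}y-b_{2}x$. This explicit periodicity obstruction is the missing ingredient; your allusion to ``stream-function regularity near saddle points'' does not capture it.
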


\begin{proof}
The proof is similar to Lemma \ref{lemma-continuous-A-torus} for bar states.
We only sketch it. For any solution $\omega\left(  t\right)  $ of the equation
$\partial_{t}\omega=JL\omega$ with $\omega\left(  0\right)  \in R\left(
J\right)  $, define $\omega_{1}\left(  t\right)  =\left(  I-P_{3}\right)
\omega\left(  t\right)  $ and let $X_{1}=\left(  I-P_{3}\right)  R\left(
J\right)  $. Then $\omega_{1}\left(  t\right)  $ satisfies the equation
$\partial_{t}\omega_{1}=\left(  I-P_{3}\right)  JL\omega$ on the space $X_{1}%
$. Since $L|_{X_{1}}>0$, the operator $\left(  I-P_{3}\right)  JL$ is
anti-selfadjoint on $\left(  X_{1},\left\langle L\cdot,\cdot\right\rangle
\right)  $. Our assumption on the spectrum of $JL$ implies that $\left(
I-P_{3}\right)  JL$ has no nonzero purely imaginary eigenvalues. To show that
the operator $\left(  I-P_{3}\right)  JL$ has purely continuous spectrum on
$X_{1}$, it remains to show that $0$ is not an embedded eigenvalue of $\left(
I-P_{3}\right)  JL$. Suppose otherwise, there exists $0\neq\omega\in X_{1}$
such that $\left(  I-P_{3}\right)  JL\omega=0$. Let $\psi_{1}=L\omega$, then
$J\psi_{1}=P_{3}JL\omega\in\ker L$. Denote
\begin{equation}
J\psi_{1}=a_{1}\cos x+a_{2}\cos y+b_{1}\sin x+b_{2}\sin y. \label{eqn-phi-1}%
\end{equation}
Since $\cos y+\cos x\in\ker J\perp J\psi_{1},$ we have $a_{1}+a_{2}=0$. Let
$a_{1}=-a_{2}=a$. It is easy to see that for any $c\neq0$ in the range of
$\psi_{0}=\cos x+\cos y$, each of the two branches of $\left\{  \psi
_{0}=c\right\}  $ is symmetric to $x,y$ in the sense that both $\left(
x,y\right)  $ and $\left(  y,x\right)  $ are on the branch. Since any function
$\phi\ $in $\ker J$ takes the form (\ref{formula-projection}), we conclude
that $\cos x-\cos y\perp\ker J$, which implies that $\cos x-\cos y\in R\left(
J\right)  $. So there exists a double periodic function $\psi_{2}$ such that
$J\psi_{2}=\cos x-\cos y$. By noting that $Jx=\sin y$ and $Jy=-\sin x$, it
follows from (\ref{eqn-phi-1}) that $J\left(  \psi_{1}-a\psi_{2}+b_{1}%
y-b_{2}x\right)  =0$. Then by (\ref{formula-projection}), the function
$\psi_{1}-a\psi_{2}+b_{1}y-b_{2}x$ must take constant on each branch of the
level set $\left\{  \psi_{0}=c\right\}  $. Since $\psi_{0}$ is double periodic
in $x,y$, this implies that $\psi_{1}-a\psi_{2}+b_{1}y-b_{2}x$ is also double
periodic. This contradiction shows that $0$ is not an embedded eigenvalue of
$\left(  I-P_{3}\right)  JL\omega$. Then (\ref{RAGE-dipole}) follows from the
standard RAGE theorem.
\end{proof}

Even with the dissipation law (\ref{dissipation-law-NS}) and above RAGE
theorem, there are still significant differences with the bar states to get
the linear enhanced damping for dipoles, besides the issue of proving the
non-existence of imaginary eigenvalues of $JL$. The most important difference
is that the decomposition (\ref{decom-L2-dipole}) is no longer invariant when
the viscosity is added on. It is under investigation to find a subspace of
initial data such that the enhanced damping is true for dipoles.

\section{Linear inviscid damping of shear flows}

\label{section-linear damping}

Consider a shear flow $u_{0}=\left(  U\left(  y\right)  ,0\right)  $ in a
channel $\left\{  y_{1}\leq y\leq y_{2}\right\}  $ or on a torus. The
linearized Euler equation can be written as
\begin{equation}
\omega_{t}+U\left(  y\right)  \partial_{x}\omega+U^{\prime\prime}\left(
y\right)  \partial_{x}\psi=0\text{, } \label{eqn-linearized-euler-shear}%
\end{equation}
where $\omega$ and $\psi=\left(  -\Delta\right)  ^{-1}\omega$ are the
vorticity and stream functions respectively.

\subsection{Stable case}

We consider two classes of stable shear flows.

Class 1: $U^{\prime\prime}\neq0$, that is, $U$ has no inflection points. This
case is restricted to a channel, since such flows can not exist on a torus. By
the classical Rayleigh inflection point theorem, $\left(  U\left(  y\right)
,0\right)  $ is linearly stable. Suppose $U^{\prime\prime}>0$, choose a
constant $U_{s}<\min U$. Then in the frame $\left(  x-U_{s}t,y,t\right)  $,
the equation (\ref{eqn-linearized-euler-shear}) becomes
\begin{equation}
\omega_{t}+\left(  U\left(  y\right)  -U_{s}\right)  \partial_{x}%
\omega+U^{\prime\prime}\left(  y\right)  \partial_{x}\psi=0,
\label{linearized-Euler-shear-frame}%
\end{equation}
Define
\[
K_{1}\left(  y\right)  =\frac{U^{\prime\prime}\left(  y\right)  }{U\left(
y\right)  -U_{s}}>0.
\]
Let the $x$ period be $2\pi/\alpha$ for any $\alpha>0$. Define the non-shear
space on the periodic channel $S_{2\pi/\alpha}\times\left[  y_{1}%
,y_{2}\right]  $ by
\[
X=\left\{  \omega=\sum_{k\in\mathbf{Z},\ k\neq0}e^{ik\alpha x}\omega
_{k}\left(  y\right)  ,\ \Vert\omega\Vert_{X}^{2}=\Vert\frac{1}{\sqrt
{K_{1}\left(  y\right)  }}\omega\Vert_{L^{2}}^{2}<\infty\right\}  .
\]
Clearly, $X\subset L^{2}\ $and $L^{2}=X$ if $\min K_{1}>0$. The equation
(\ref{linearized-Euler-shear-frame}) can be written in a Hamiltonian form
\[
\omega_{t}=-U^{\prime\prime}\left(  y\right)  \partial_{x}\left(  \frac
{\omega}{K_{1}\left(  y\right)  }+\psi\right)  =JL\omega,
\]
where
\[
J=-U^{\prime\prime}\left(  y\right)  \partial_{x}:X^{\ast}\rightarrow
X,\ \ \ \ L=\frac{1}{K_{1}\left(  y\right)  }+\left(  -\Delta\right)
^{-1}:X\rightarrow X^{\ast},
\]
are anti-selfadjoint and self-adjoint respectively. Moreover, $L$ is uniformly
positive on $X$ and thus $JL$ is anti-selfadjoint in the equivalent inner
product $\left\langle L\cdot,\cdot\right\rangle $. Since $X\subset L^{2}$, by
Lemma \ref{lemma-inflection-value}, $JL$ has no purely imaginary eigenvalues
in $X\ $and the entire spectrum of $JL$ is continuous.

Case 2: Assume that there exists $U_{s}$ in the range of $U$ such that
\begin{equation}
K_{2}\left(  y\right)  =-\frac{U^{\prime\prime}}{U-U_{s}}>0
\label{assumption-K2}%
\end{equation}
is bounded. We call these flows to be in class $\mathcal{K}^{+}$, as used in
\cite{lin-shear}. The assumption (\ref{assumption-K2}) implies that $U_{s}$ is
the only inflection value of $U$. Examples include $U\left(  y\right)  =\sin
y,\ \tanh y$, and more generally any $U\left(  y\right)  $ satisfying the ODE
$U^{\prime\prime}=g\left(  U\right)  $ with a decreasing $g$. Then
(\ref{assumption-K2}) is satisfied with $U_{s}=g^{-1}\left(  0\right)  $. Let
the $x$ period be $2\pi/\alpha$. We can consider the class $\mathcal{K}^{+}$
flows in a periodic channel $S_{2\pi/\alpha}\times\left[  y_{1},y_{2}\right]
$ or on a torus $S_{2\pi/\alpha}\times S_{y_{2}-y_{1}}$. The linearized Euler
equation (\ref{linearized-Euler-shear-frame}) with $U_{s}$ being the
inflection value can be written in the Hamiltonian form%
\begin{equation}
\omega_{t}=U^{\prime\prime}\left(  y\right)  \partial_{x}\left(  \frac{\omega
}{K_{2}\left(  y\right)  }-\psi\right)  =JL\omega,
\label{linearized-operator-shear-K}%
\end{equation}
where
\[
J=U^{\prime\prime}\left(  y\right)  \partial_{x},\ \ \ \ L=\frac{1}%
{K_{2}\left(  y\right)  }-\left(  -\Delta\right)  ^{-1}.
\]
Define the non-shear space of vorticity
\begin{equation}
X=\left\{  \omega=\sum_{k\in\mathbf{Z},\ k\neq0}e^{ik\alpha x}\omega
_{k}\left(  y\right)  ,\ \Vert\omega\Vert_{X}^{2}=\Vert\frac{1}{\sqrt
{K_{2}\left(  y\right)  }}\omega\Vert_{L^{2}}^{2}<\infty\right\}  .
\label{defn-X-K2}%
\end{equation}
Again, $X\subset L^{2}\ $and $L^{2}=X$ if $\min K_{2}>0$.\ Denote
$n^{-}\left(  L\right)  $ $\left(  n^{0}\left(  L\right)  \right)  \ $to be
the number of negative (zero) directions of $L$ on $X$. Define the operator%
\begin{equation}
A_{0}=-\Delta-K_{2}\left(  y\right)  :H^{2}\rightarrow L^{2} \label{defn-A0}%
\end{equation}
in the channel $S_{2\pi/\alpha}\times\left[  y_{1},y_{2}\right]  $ or on the
torus $S_{2\pi/\alpha}\times S_{y_{2}-y_{1}}$ and
\begin{equation}
L_{0}=-\frac{d^{2}}{dy^{2}}-K_{2}\left(  y\right)  :H^{2}\left(  y_{1}%
,y_{2}\right)  \rightarrow L^{2}\left(  y_{1},y_{2}\right)  , \label{defn-L0}%
\end{equation}
with the Dirichlet boundary conditions for the channel and periodic boundary
conditions for the torus. Then by Lemma 11.3 in \cite{lin-zeng-hamiltonian},
we have
\[
n^{0}\left(  L\right)  =n^{0}\left(  A_{0}\right)  =2\sum_{k\geq1}n^{0}\left(
L_{0}+k^{2}\alpha^{2}\right)  ,
\]
and
\[
n^{-}\left(  L\right)  =n^{-}\left(  A_{0}\right)  =2\sum_{k\geq1}n^{-}\left(
L_{0}+k^{2}\alpha^{2}\right)  .
\]
If $n^{-}\left(  L_{0}\right)  \neq0$, let $-\alpha_{\max}^{2}$ be the
smallest eigenvalue of $L_{0}$ and $\phi_{0}$ be an eigenfunction. When
$L_{0}\geq0$, let $\alpha_{\max}=0$. Then by the above relations, $L$ is
positive when $\alpha>\alpha_{\max}$. Again, by Lemma
\ref{lemma-inflection-value}, the spectrum of $JL$ is purely continuous in
$X$. When $\alpha=\alpha_{\max}$, we have $n^{-}\left(  L\right)  =0$ and
\[
\ker L=\left\{  e^{\pm i\alpha x}\omega_{0}\left(  y\right)  \right\}
,\ \omega_{0}\left(  y\right)  =\left(  -\frac{d^{2}}{dy^{2}}+\alpha
^{2}\right)  \phi_{0}.
\]
Let $P_{1}$ be the projection of $X$ to $\ker L$ and $X_{1}=\left(
I-P_{1}\right)  X$. Then $L|_{X_{1}}>0$ and $A_{1}=\left(  I-P_{1}\right)  JL$
is anti-selfadjoint on $\left(  X_{1},\left\langle L\cdot,\cdot\right\rangle
\right)  $.

\begin{lemma}
\label{lemma-continuous-A1-stable}$A_{1}$ has purely continuous spectrum on
$X_{1}$.
\end{lemma}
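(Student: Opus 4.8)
My plan is to mirror the arguments in the proofs of Lemmas \ref{lemma-spectra-rectangular} ii) and \ref{lemma-continuous-A-torus}. Since $A_1=(I-P_1)JL$ has just been shown to be anti-selfadjoint in $\langle L\cdot,\cdot\rangle$ on $X_1$, its spectrum lies on the imaginary axis and it has no residual spectrum; for the essential spectrum I would invoke Weyl's theorem, noting that $JL$ is a compact perturbation of $D_0=(U''/K_2)\partial_x=-(U-U_s)\partial_x$ (the term $-U''\partial_x(-\Delta)^{-1}$ being compact) and $P_1JL$ is finite rank, so $A_1$ is a compact perturbation of $D_0$; on the Fourier mode $e^{ik\alpha x}$ the operator $D_0$ is multiplication by $-ik\alpha(U(y)-U_s)$, and as $U_s$ lies in the range of $U$ the union over $k\neq 0$ of these purely continuous spectra is the whole imaginary axis. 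Hence the content of the lemma is exactly that $A_1$ has no eigenvalues.

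Suppose $A_1\omega=\lambda\omega$ with $\lambda\in i\mathbf{R}$ and $0\neq\omega\in X_1$. Then $\tilde\omega:=(JL-\lambda)\omega\in R(P_1)=\ker L$, so $JL\tilde\omega=0$. For $\lambda\neq 0$ I would argue as in Lemma \ref{lemma-continuous-A-torus}: $JL(\omega+\lambda^{-1}\tilde\omega)=\lambda(\omega+\lambda^{-1}\tilde\omega)$ and $\omega+\lambda^{-1}\tilde\omega\neq 0$ (otherwise $\omega\in\ker L\cap X_1=\{0\}$), so $\lambda$ would be a nonzero imaginary eigenvalue of $JL$ on $X$. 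Restricting its eigenvalue equation to the $k$-th Fourier mode and using $U''/K_2=-(U-U_s)$ turns it, for $\psi_k\in H^2$, into the Rayleigh equation with real phase speed $U_s-c$, $c=\lambda/(ik\alpha)$; by Lemma \ref{lemma-inflection-value} this phase speed must be an inflection value of $U$, which by (\ref{assumption-K2}) is only $U_s$, forcing $c=0$ and $\lambda=0$, a contradiction.

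The substantive case is $\lambda=0$. Then $JL\omega=\tilde\omega\in\ker L=\mathrm{span}\{e^{\pm i\alpha x}\omega_0\}$, and $(L_0+\alpha^2)\phi_0=0$ gives $\omega_0=(-d^2/dy^2+\alpha^2)\phi_0=K_2\phi_0$. If $\tilde\omega=0$ then $L\omega\in\ker J$; but $\ker J$ consists of $x$-independent functions (under the standing hypotheses $\{U''=0\}$ is discrete), while $L\omega$ is non-shear since $\omega\in X$, so $L\omega=0$ and $\omega=P_1\omega=0$. If $\tilde\omega\neq 0$, with, say, nonzero $e^{i\alpha x}$-coefficient $c_1$, I would first note that $L=1/K_2-(-\Delta)^{-1}$ is strictly positive on the modes $|k|\geq 2$ (there $L_0+k^2\alpha^2>0$), so $U''\partial_x L\omega=\tilde\omega$ confines $\omega$ and $\psi=(-\Delta)^{-1}\omega$ to the modes $k=\pm 1$; comparing $e^{i\alpha x}$-coefficients then yields
\[
\frac{\omega_1}{K_2}-\psi_1 \;=\; \frac{c_1\,\omega_0}{i\alpha U''} \;=\; \frac{c_1 K_2\phi_0}{i\alpha U''} \;=\; \frac{-\,c_1\,\phi_0}{i\alpha\,(U-U_s)} .
\]
Near any $y_0$ with $U(y_0)=U_s$ one has $U'(y_0)\neq 0$ (else $U''=-K_2(U-U_s)$ would be forced to vanish to second order while being the second derivative of a function vanishing to the same order) and $\phi_0(y_0)>0$ (positive ground state of $L_0$), so the right side behaves like $(y-y_0)^{-1}$ and is not in $L^2$ near $y_0$, whereas the left side lies in $L^2$ there because $\omega\in X$ gives $\omega_1/K_2\in L^2$ locally and $\psi_1\in H^2$. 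This contradiction finishes the proof.

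I expect the $\lambda=0$ step with $\tilde\omega\neq 0$ to be the main obstacle: it requires using the explicit form $\omega_0=K_2\phi_0$ of $\ker L$ together with the cancellation $K_2/U''=-1/(U-U_s)$ to expose the genuine $1/(U-U_s)$ singularity of $L\omega$ at the inflection point, and it relies on the nonvanishing of the ground state $\phi_0$ and of $U'$ where $U=U_s$; the positivity/Weyl arguments and the $\lambda\neq 0$ reduction are routine once these are in place.
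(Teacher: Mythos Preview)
Your argument is correct and coincides with the paper's: both reduce to the scheme of Lemma \ref{lemma-continuous-A-torus}, the crux of the $\lambda=0$ case being that $\phi_0\neq 0$ at some point of $\{U=U_s\}$, which makes $\phi_0/(U-U_s)\notin L^2_{\rm loc}$ and contradicts $L\omega\in X^\ast$. The paper obtains this nonvanishing by citing Lemma~3.5 of \cite{lin-shear}, whereas you invoke positivity of the ground state of $L_0$; your separate verification that $U'(y_0)\neq 0$ is in fact unnecessary, since the mean value bound $|U-U_s|\leq C|y-y_0|$ already forces at least a $1/(y-y_0)$ singularity once $\phi_0(y_0)\neq 0$.
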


\begin{proof}
By Lemma 3.5 of \cite{lin-shear}, $\phi_{0}\neq0$ at at least one of the
points in the set $\left\{  U=U_{s}\right\}  $. By using this fact, the rest
of the proof follows that of Lemma \ref{lemma-continuous-A-torus}, so we skip
the details.
\end{proof}

By the above spectral properties, the following is a direct consequence of the
RAGE Theorem.

\begin{theorem}
\label{thm-rage-general-shear}If i) $U^{\prime\prime}\neq0$ or ii) $U$ is in
class $\mathcal{K}^{+}\ $and $\alpha>\alpha_{\max}$, then for any compact
operator $B\ $on $X$, we have
\begin{equation}
\frac{1}{T}\int_{0}^{T}\left\Vert Be^{itJL}\omega\right\Vert _{X}%
^{2}dt\rightarrow0,\ \text{when }T\rightarrow\infty, \label{decay-B-average}%
\end{equation}
for any $\omega\in X$. If $U$ is in class $\mathcal{K}^{+}$ and $\alpha
=\alpha_{\max}$, then for any compact operator $B\ $on $X$, we have
\[
\frac{1}{T}\int_{0}^{T}\left\Vert B\left(  I-P_{1}\right)  e^{itJL}%
\omega\right\Vert _{X}^{2}dt\rightarrow0,\ \text{when }T\rightarrow\infty,
\]
for any $\omega\in X$.
\end{theorem}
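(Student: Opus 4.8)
The plan is to read (\ref{decay-B-average}) off the RAGE theorem of \cite{CFKS} after recasting the linearized Euler evolution (\ref{linearized-operator-shear-K}) in a self-adjoint framework, exactly as was done for the bar states in Section 2. In cases i) and ii), the first step is to observe that $L$ is uniformly positive on $X$, so that $[\,\cdot\,,\cdot\,]:=\langle L\cdot,\cdot\rangle$ is an inner product on $X$ equivalent to $\Vert\cdot\Vert_{X}^{2}$; with respect to it $JL$ is anti-selfadjoint, i.e. $-iJL$ is self-adjoint, and it generates the unitary group $e^{tJL}$ that solves (\ref{linearized-operator-shear-K}). The spectral input is already in place in the discussion preceding the theorem: $JL$ has no purely imaginary eigenvalues in $X$ by Lemma \ref{lemma-inflection-value}, and (by the compact-perturbation argument of Lemma \ref{lemma-spectra-rectangular}, Weyl's theorem) its continuous spectrum fills the whole imaginary axis; hence the spectrum of $-iJL$ on $X$ is purely continuous and the continuous-spectral projection $P_{c}$ equals the identity.

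The second step is then a one-line invocation of RAGE. Since $\Vert\cdot\Vert_{X}$ and $[\,\cdot\,,\cdot\,]^{1/2}$ are equivalent, any operator $B$ compact on $X$ is also compact on the Hilbert space $(X,[\,\cdot\,,\cdot\,])$, so RAGE gives
\[
\frac{1}{T}\int_{0}^{T}\left\Vert Be^{tJL}\omega\right\Vert _{[\,\cdot\,,\cdot\,]}^{2}\,dt=\frac{1}{T}\int_{0}^{T}\left\Vert Be^{tJL}P_{c}\omega\right\Vert _{[\,\cdot\,,\cdot\,]}^{2}\,dt\longrightarrow 0,\qquad T\to\infty ,
\]
for every $\omega\in X$, and (\ref{decay-B-average}) follows from the equivalence of the two norms.

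For the critical case $U\in\mathcal{K}^{+}$ with $\alpha=\alpha_{\max}$, the operator $L$ is only positive on $X_{1}=(I-P_{1})X$, so I would replace $JL$ by $A_{1}=(I-P_{1})JL$, which is anti-selfadjoint on $(X_{1},[\,\cdot\,,\cdot\,])$ and, by Lemma \ref{lemma-continuous-A1-stable}, has purely continuous spectrum there. The small computation to carry out is the intertwining identity $(I-P_{1})e^{tJL}\omega=e^{tA_{1}}(I-P_{1})\omega$ for $\omega\in X$: applying $I-P_{1}$ to $\partial_{t}\omega=JL\omega$ and using that $JLP_{1}\omega=0$ (because $P_{1}\omega\in\ker L$) gives $\partial_{t}(I-P_{1})\omega=(I-P_{1})JL(I-P_{1})\omega=A_{1}(I-P_{1})\omega$. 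Applying RAGE to $e^{tA_{1}}$ on $(X_{1},[\,\cdot\,,\cdot\,])$ with the compact operator $B(I-P_{1})$ restricted to $X_{1}$, and then substituting the intertwining identity, yields the asserted time-average decay of $\Vert B(I-P_{1})e^{tJL}\omega\Vert_{X}$.

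I do not expect a genuine analytic obstacle: the real content — the absence of embedded (indeed of any) eigenvalues of $JL$ and of $A_{1}$ — is precisely what Lemmas \ref{lemma-inflection-value} and \ref{lemma-continuous-A1-stable} provide, and everything else is a straightforward consequence of RAGE. The only points requiring a word of care, which I would dispatch in a line or two, are bookkeeping ones: that compactness of $B$ is unaffected by passing to the equivalent inner product $[\,\cdot\,,\cdot\,]$ and, in the critical case, by restricting to the closed subspace $X_{1}$; and that the usual statement of RAGE for an operator acting into itself applies verbatim to a compact operator with values in a different Hilbert space, since its proof only uses that compact operators carry weakly null sequences to norm null sequences.
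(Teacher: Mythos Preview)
Your proposal is correct and is exactly the approach the paper takes: the paper states the theorem as ``a direct consequence of the RAGE Theorem'' once the spectral properties have been established, and those properties are precisely the ones you cite (positivity of $L$ on $X$ or on $X_{1}$, anti-selfadjointness of $JL$ or $A_{1}$ in $\langle L\cdot,\cdot\rangle$, and absence of embedded eigenvalues via Lemmas \ref{lemma-inflection-value} and \ref{lemma-continuous-A1-stable}). Your write-up in fact spells out more detail than the paper does---the intertwining identity $(I-P_{1})e^{tJL}=e^{tA_{1}}(I-P_{1})$ and the bookkeeping about compactness under the equivalent inner product---but these are precisely the points implicit in the paper's one-line proof.
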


By choosing
\[
B\omega=\nabla^{\perp}\left(  -\Delta\right)  ^{-1}\omega=u,
\]
that is, the mapping from vorticity to velocity, we get

\begin{corollary}
\label{cor-decay-general-shear}i) If a) $U^{\prime\prime}\neq0$ or b) $U$ is
in class $\mathcal{K}^{+}$ and $\alpha>\alpha_{\max}$, then
\[
\frac{1}{T}\int_{0}^{T}\left\Vert u\left(  t\right)  \right\Vert _{L^{2}}%
^{2}dt\rightarrow0,\ \text{when }T\rightarrow\infty,
\]
for any solution $\omega\left(  t\right)  \ $of
(\ref{eqn-linearized-euler-shear}) with $\omega\left(  0\right)  \in X$.

ii) If $U$ is in class $\mathcal{K}^{+}$ and $\alpha=\alpha_{\max}$, then
\[
\frac{1}{T}\int_{0}^{T}\left\Vert u_{1}\left(  t\right)  \right\Vert _{L^{2}%
}^{2}dt\rightarrow0,\ \text{when }T\rightarrow\infty,
\]
where $u_{1}\left(  t\right)  $ is the velocity corresponding to the vorticity
$\left(  I-P_{1}\right)  \omega\left(  t\right)  $ with $\omega\left(
0\right)  \in X$.
\end{corollary}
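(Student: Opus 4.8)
The plan is to deduce Corollary~\ref{cor-decay-general-shear} from Theorem~\ref{thm-rage-general-shear} by taking the operator $B$ there to be the map sending a vorticity to its velocity, $B\omega=\nabla^{\perp}(-\Delta)^{-1}\omega=u$, and checking that this choice is legitimate. First I would record that $B$ is compact. On the bounded domain in question --- the periodic channel $S_{2\pi/\alpha}\times[y_{1},y_{2}]$ with $\psi$ vanishing on the boundary, or the torus $S_{2\pi/\alpha}\times S_{y_{2}-y_{1}}$ on mean-zero functions --- elliptic regularity gives $(-\Delta)^{-1}:L^{2}\rightarrow H^{2}$ bounded, so $B=\nabla^{\perp}(-\Delta)^{-1}:L^{2}\rightarrow H^{1}$ is bounded, and composing with the compact Rellich embedding $H^{1}\hookrightarrow L^{2}$ shows $B:L^{2}\rightarrow L^{2}$ is compact; since $X\hookrightarrow L^{2}$ continuously (because $K_{i}$ is bounded), $B$ is compact out of $X$ as well. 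To stay literally inside the RAGE statement, whose operator acts on the Hilbert space itself, I would instead work with $C=(L^{-1}(-\Delta)^{-1})^{1/2}$ on $(X,\langle L\cdot,\cdot\rangle)$: the form identity $\Vert u\Vert_{L^{2}}^{2}=\langle(-\Delta)^{-1}\omega,\omega\rangle_{L^{2}}=\langle L^{-1}(-\Delta)^{-1}\omega,\omega\rangle$ shows that $L^{-1}(-\Delta)^{-1}$ is a nonnegative self-adjoint operator on $(X,\langle L\cdot,\cdot\rangle)$, compact as the composition of the bounded $L^{-1}$ with the compact $(-\Delta)^{-1}$, so $C$ is compact, self-adjoint, and $\Vert C\xi\Vert_{X}=\Vert u_{\xi}\Vert_{L^{2}}$ with $u_{\xi}$ the velocity of $\xi$.

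With this in hand, the RAGE theorem does the rest. In cases i.a) ($U^{\prime\prime}\neq0$) and i.b) ($U\in\mathcal{K}^{+}$, $\alpha>\alpha_{\max}$), the discussion preceding the corollary shows that $JL$ is anti-selfadjoint on $(X,\langle L\cdot,\cdot\rangle)$ with purely continuous spectrum, so the continuous-spectral projection is the identity. Applying RAGE to the unitary group $e^{tJL}$ with the compact operator $C$ yields
\[
\frac{1}{T}\int_{0}^{T}\Vert u(t)\Vert_{L^{2}}^{2}\,dt=\frac{1}{T}\int_{0}^{T}\Vert Ce^{tJL}\omega(0)\Vert_{X}^{2}\,dt\rightarrow0\quad\text{as }T\rightarrow\infty,
\]
for every $\omega(0)\in X$, where $\omega(t)=e^{tJL}\omega(0)$ is the solution of (\ref{eqn-linearized-euler-shear}) in the Galilean frame used to put it in Hamiltonian form; since that change of frame does not alter $\Vert u(t)\Vert_{L^{2}}$, this gives part i).

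For the critical case ii) ($U\in\mathcal{K}^{+}$, $\alpha=\alpha_{\max}$), I would pass to $X_{1}=(I-P_{1})X$, with $P_{1}$ the $L^{2}$-orthogonal projection onto $\ker L$. If $\omega(t)$ solves (\ref{eqn-linearized-euler-shear}) then $\omega_{1}(t)=(I-P_{1})\omega(t)$ satisfies $\partial_{t}\omega_{1}=(I-P_{1})JL\,\omega_{1}=A_{1}\omega_{1}$, because $LP_{1}\omega=0$ kills the contribution of $P_{1}\omega$; hence $\omega_{1}(t)=e^{tA_{1}}\omega_{1}(0)$. By Lemma~\ref{lemma-continuous-A1-stable}, $A_{1}$ is anti-selfadjoint on $(X_{1},\langle L\cdot,\cdot\rangle)$ with purely continuous spectrum, and $L|_{X_{1}}>0$; repeating the previous paragraph with $e^{tA_{1}}$ on $X_{1}$ and with $u_{1}(t)$ the velocity of $\omega_{1}(t)$ gives $\frac{1}{T}\int_{0}^{T}\Vert u_{1}(t)\Vert_{L^{2}}^{2}\,dt\rightarrow0$.

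I do not anticipate a genuine obstacle here: the analytic content sits in Theorem~\ref{thm-rage-general-shear} and the spectral lemmas behind it. The only points needing care are the compactness of the vorticity-to-velocity map --- which is exactly where boundedness of the domain enters, through the compact embedding $H^{1}\hookrightarrow L^{2}$ --- and the mild bookkeeping that converts RAGE for an operator valued in the velocity space into RAGE for the self-adjoint compact operator $C$ on $(X,\langle L\cdot,\cdot\rangle)$.
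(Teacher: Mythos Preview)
Your proposal is correct and follows exactly the paper's approach: the paper simply states that the corollary follows by choosing $B\omega=\nabla^{\perp}(-\Delta)^{-1}\omega$ in Theorem~\ref{thm-rage-general-shear}, without further justification. Your write-up supplies the details the paper omits---the compactness of the Biot--Savart map via elliptic regularity and Rellich, and the auxiliary operator $C=(L^{-1}(-\Delta)^{-1})^{1/2}$ to reconcile the fact that $B$ does not literally map $X$ to itself---but the underlying idea is identical.
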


\begin{remark}
\label{rmk-regularity-measure}More information on the decay of $\left\Vert
u\left(  t\right)  \right\Vert _{L^{2}}$ could be obtained by studying the
regularity of the spectral measure of the anti-selfadjoint operator $JL$ on
$\left(  X,\left\langle L\cdot,\cdot\right\rangle \right)  $. In particular,
if the spectral measure of $JL$ is absolutely continuous (i.e. absence of
singular continuous spectrum), then when $t\rightarrow\infty,\ \omega\left(
t\right)  \rightarrow0$ weakly in $X$. \ As a result, $\left\Vert u\left(
t\right)  \right\Vert _{L^{2}}\rightarrow0$ when $t\rightarrow\infty$.
\end{remark}

\begin{remark}
\label{remark-dual-cascade}Let $B=P_{N}$ in (\ref{decay-B-average}), i.e., the
projection operator to the first $N$ Fourier modes (in $x$), then
\begin{equation}
\frac{1}{T}\int_{0}^{T}\left\Vert P_{N}\omega\left(  t\right)  \right\Vert
_{L^{2}}^{2}dt\rightarrow0,\text{when }T\rightarrow\infty.
\label{enstrophy-cascade}%
\end{equation}
This shows that in the time averaged sense, the low frequency part of $\omega$
tends to zero. As can be seen in the proof of Theorems \ref{thm-linearized}
and \ref{thm-nonlinear}, this property plays an important role in the proof of
metastability of Kolmogorov flows. In the fluid literature (see e.g.
\cite{tableling} \cite{bouchett-reports-09}), a dual cascade was known for 2D
turbulence that energy moves to low frequency end and the enstrophy (i.e.
$\left\Vert \omega\right\Vert _{L^{2}}^{2}$) moves to the high frequency end.
The result (\ref{enstrophy-cascade}) can be seen as a justification of such
physical intuition in a weak sense.
\end{remark}

\begin{remark}
\label{rmk-energy-casimir}The two classes of shear flows considered above are
linearly stable in the $L^{2}$ norm of vorticity (assuming $\inf\ K_{i}>0$),
in the Liapunov sense. This follows directly from the conservation of
$\left\langle L\omega,\omega\right\rangle $ for the linearized Euler equation
(\ref{linearized-Euler-shear-frame}) and the positivity of $L|_{X}$. Moreover,
these two classes seem to exhaust all the possible shear flows for which
nonlinear stability could be proved by the energy-Casimir method initiated by
Arnold (\cite{arnold-stability-1} \cite{arnold-stability-2}) in 1960s. We
briefly discuss it below and refer to \cite{marchiro-pulvirenti-book} for more
discussions on energy-Casimir method for 2D Euler equations. The
energy-Casimir functional is of the form
\[
H\left(  \omega\right)  =\int\left(  G\left(  \omega\right)  +\frac{1}%
{2}\left\vert \nabla\psi\right\vert ^{2}\right)  \ dxdy,
\]
which is invariant for the nonlinear Euler equation. Suppose $\psi
_{0}=F\left(  \omega_{0}\right)  $, where $\psi_{0}=\int\left(  U-U_{s}%
\right)  dy$ and $\omega_{0}=-U^{\prime\prime}$. Choose $G$ such that
$G^{\prime}\left(  \omega_{0}\right)  =-F\left(  \omega_{0}\right)  $, then
$H^{\prime}\left(  \omega_{0}\right)  =0$ and the second order variation is
given by
\[
\left\langle H^{\prime\prime}\left(  \omega\right)  \delta\omega,\delta
\omega\right\rangle =\frac{1}{2}\int\frac{\left(  \delta\omega\right)  ^{2}%
}{K_{1}\left(  y\right)  }+\frac{1}{2}\left\vert \nabla\delta\psi\right\vert
^{2}=\frac{1}{2}\left\langle L\delta\omega,\delta\omega\right\rangle
\]
for $U\left(  y\right)  $ in class 1 and
\[
\left\langle H^{\prime\prime}\left(  \omega\right)  \delta\omega,\delta
\omega\right\rangle =\frac{1}{2}\int-\frac{\left(  \delta\omega\right)  ^{2}%
}{K_{2}\left(  y\right)  }+\frac{1}{2}\left\vert \nabla\delta\psi\right\vert
^{2}=-\frac{1}{2}\left\langle L\delta\omega,\delta\omega\right\rangle
\]
for $U\left(  y\right)  $ in class 2. In the above, we use the relation
\[
G^{\prime\prime}\left(  \omega_{0}\right)  =-F^{\prime}\left(  \omega
_{0}\right)  =\frac{U-U_{s}}{U^{\prime\prime}}.
\]
Thus $\left\langle H^{\prime\prime}\left(  \omega\right)  \delta\omega
,\delta\omega\right\rangle $ on $X\ $is positive definite for class 1 and
negative definite for class 2 when $\alpha>\alpha_{\max}$. Then nonlinear
stability (in $L^{2}$ vorticity) could be proved by properly handling the
higher order terms. However, if $\frac{U-U_{s}}{U^{\prime\prime}}%
\ $(equivalently $K_{1},K_{2}$) changes sign, then the quadratic form
$\left\langle H^{\prime\prime}\left(  \omega\right)  \delta\omega,\delta
\omega\right\rangle $ is highly indefinite and the energy-Casimir method does
not work. Despite their above restrictions, the steady flows whose stability
can be studied by energy-Casimir method do appear often as observable coherent
states in 2D turbulence.
\end{remark}

\subsection{Unstable case}

The shear flows $U\left(  y\right)  \ $in class $\mathcal{K}^{+}\ $are proved
to be linearly unstable when the horizontal wave number $\alpha<\alpha_{\max}%
$, see Theorem 1.2 in \cite{lin-shear}. In this subsection, we will prove the
inviscid damping on the center space which is complementary to the stable and
unstable subspaces. The proof of linear instability in \cite{lin-shear} is by
studying the possible neutral limiting modes and the bifurcation of unstable
modes near them. By using the Hamiltonian structure of
(\ref{linearized-Euler-shear-frame}) and the instability index formula in
\cite{lin-zeng-hamiltonian}, we can recover this linear instability criterion.
Moreover, we get more detailed information about the number of unstable modes
which is important to study the inviscid damping on the center space.

\begin{proposition}
\label{prop-index-JL}Consider $U\left(  y\right)  $ in class $\mathcal{K}%
^{+}\ $and $\alpha>0$, where $2\pi/\alpha$ is the $x$ period. Let $k_{u}$ be
the total algebraic multiplicities of unstable eigenvalues of the operator
$JL\ $defined in (\ref{linearized-operator-shear-K}). Then $k_{u}=n^{-}\left(
A_{0}\right)  $, where $A_{0}$ is defined in (\ref{defn-A0}).
\end{proposition}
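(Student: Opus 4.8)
The plan is to apply the general instability-index theorem for Hamiltonian PDEs from \cite{lin-zeng-hamiltonian} to the operator $JL$ written in the form (\ref{linearized-operator-shear-K}), and then reduce the abstract index to the concrete count $n^{-}(A_{0})$. Recall that the index theory attaches to a Hamiltonian operator $JL$ with $L$ self-adjoint the identity $k_{u} = k_{r} + 2k_{c} + 2k_{i}^{\leq 0}$, where $k_{u}$ is the total algebraic multiplicity of eigenvalues with positive real part, $k_{r}$ counts real positive eigenvalues (with their negative-energy refinement), $k_{c}$ counts quadruplets of complex eigenvalues off the axes, and $k_{i}^{\leq 0}$ counts purely imaginary eigenvalues with nonpositive energy, and that the full instability index equals $n^{-}(L)$ minus a nonnegative correction. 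For the present problem the key structural simplification, exactly as in the stable case treated above, is that the quadratic form $\langle L\omega,\omega\rangle$ is \emph{conserved} along the flow, so there can be no purely imaginary eigenvalue with nonzero energy, and the negative-energy count on the imaginary axis is controlled. First I would verify that $JL$ here satisfies the hypotheses needed to invoke the abstract theorem: $J=U''(y)\partial_x$ is anti-self-adjoint with the appropriate domain on the non-shear space $X$ defined in (\ref{defn-X-K2}), $L=\frac{1}{K_2(y)}-(-\Delta)^{-1}$ is self-adjoint and bounded below with finitely many negative directions (which is $n^{-}(L)=n^{-}(A_0)$ by Lemma 11.3 of \cite{lin-zeng-hamiltonian}, already recorded in the excerpt), and $n^{0}(L)<\infty$.

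The heart of the argument is to show that the abstract index inequality is in fact an equality, i.e. that $k_u = n^{-}(L)$, and then combine with $n^{-}(L)=n^{-}(A_0)$. To get equality I would argue that all "lost" directions in the index formula vanish: (a) there are no nonzero purely imaginary eigenvalues of negative or zero energy because $\langle L\cdot,\cdot\rangle$ is a conserved quadratic form — a genuine purely imaginary eigenvalue $\lambda\neq 0$ would produce, after pairing, a solution on which the conserved form is not sign-definite only if the energy vanishes, and the non-existence of embedded imaginary eigenvalues (via the Rayleigh equation and Lemma \ref{lemma-inflection-value}, exactly as in Lemma \ref{lemma-spectra-rectangular}) rules out the $\lambda=0$ embedded case except through $\ker L$; (b) $\ker L$ contributes no Jordan chain into the unstable space, which one checks by the same $H^2$-obstruction computation used in Lemma \ref{lemma-continuous-A-torus} and Lemma \ref{lemma-continuous-A1-stable} — solving $JL\omega=\tilde\omega$ for $\tilde\omega\in\ker L$ forces the stream function out of $H^2$. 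These two points, together with the fact that complex quadruplets and real eigenvalues each "consume" exactly one unit of $n^{-}(L)$ per unstable dimension in this setting, force the index identity to collapse to $k_u=n^{-}(L)$.

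The main obstacle I anticipate is the bookkeeping in step (a)–(b): one must rule out the subtle possibility of a purely imaginary eigenvalue whose generalized eigenspace carries a degenerate (isotropic) energy form, which is precisely the scenario the abstract formula allows and which would break the equality. Controlling this requires knowing the structure of $\ker(JL-\lambda)$ on the imaginary axis; here the Rayleigh-equation analysis is essential, and I would lean on the fact (used repeatedly above, and on Lemma 3.5 of \cite{lin-shear}) that eigenfunctions of the associated Schrödinger operator do not vanish identically at the critical layer $\{U=U_s\}$, which is what forbids the would-be singular solutions from lying in $H^2$. A secondary technical point is the behavior at the boundary of the channel versus the torus — the Dirichlet versus periodic conditions enter through $L_0$ in (\ref{defn-L0}) — but this only affects the explicit value of $n^{-}(A_0)$ through the decomposition into Fourier modes in $x$, not the index identity itself, so it can be handled uniformly. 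Once the index collapse is established, the conclusion $k_u=n^{-}(A_0)$ is immediate, and as noted in the excerpt it yields the corollary $L|_{E^c}\geq 0$ on the center space, which is what Theorem \ref{thm-damping-center} will use.
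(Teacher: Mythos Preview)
Your approach is essentially the same as the paper's: apply the index formula from \cite{lin-zeng-hamiltonian}, kill the imaginary-axis contribution $k_i^{\leq 0}$ via Lemma \ref{lemma-inflection-value}, kill the zero-eigenvalue contribution $k_0^{\leq 0}$ by showing the generalized kernel of $JL$ equals $\ker L$ (as in Lemmas \ref{lemma-continuous-A-torus} and \ref{lemma-continuous-A1-stable}), and read off $k_u=n^{-}(L)=n^{-}(A_0)$.

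Two points to clean up. First, your statement of the index formula is garbled: you write ``$k_u=k_r+2k_c+2k_i^{\leq 0}$,'' but in fact $k_u=k_r+2k_c$ is simply the count of unstable eigenvalues, and the index identity is
\[
k_r+2k_c+2k_i^{\leq 0}+k_0^{\leq 0}=n^{-}(L),
\]
so the task is to show the last two terms vanish. Second, your treatment of (a) is more elaborate than needed. The conservation-of-energy heuristic you sketch (``a purely imaginary eigenvalue would produce a solution on which the conserved form is not sign-definite'') is beside the point; the actual mechanism is that Lemma \ref{lemma-inflection-value} forces any $H^2$ neutral mode to have phase speed equal to the unique inflection value $U_s$, i.e.\ $c=0$ in the moving frame, so there are \emph{no} nonzero purely imaginary eigenvalues at all, and $k_i^{\leq 0}=0$ follows immediately without any energy-sign discussion. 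With those fixes your proposal matches the paper's proof.
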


\begin{proof}
It is easy to see that the unstable eigenfunctions of $JL$ are in the space
$X$ defined in (\ref{defn-X-K2}). Since on the energy space $X$, $n^{-}\left(
L\right)  =n^{-}\left(  A_{0}\right)  <\infty$, we can use Theorem 2.3 in
\cite{lin-zeng-hamiltonian} to get the index formula
\begin{equation}
k_{r}+2k_{c}+2k_{i}^{\leq0}+k_{0}^{\leq0}=n^{-}\left(  L\right)  .
\label{index-formula}%
\end{equation}
Here, $k_{r}$ and $k_{c}$ are the algebraic multiplicities of unstable
eigenvalues of $JL$ lying on the positive axis and the first quadrant
respectively, $k_{i}^{\leq0}$ is the number of non-positive directions of $L$
restricted to the generalized eigenspace of imaginary eigenvalues on
$i\mathbf{R}^{+}$, and $k_{0}^{\leq0}$ is the number of non-positive
directions of $L$ on $E_{0}/\ker L$ where $E_{0}$ is the generalized zero
eigenspace of $JL$. Since $JL$ has no nonzero imaginary eigenvalue, so
$k_{i}^{\leq0}=0$. As in the proof of Lemmas \ref{lemma-continuous-A-torus}
and \ref{lemma-continuous-A1-stable}, it can be shown that $E_{0}=\ker L$ and
thus $k_{0}^{\leq0}=0$. Therefore (\ref{index-formula}) implies that
\[
k_{u}=k_{r}+2k_{c}=n^{-}\left(  L\right)  =n^{-}\left(  A_{0}\right)  .\text{
}%
\]

\end{proof}

The space $X$ has an invariant decomposition $X=\cup_{l\in\mathbf{Z},\ l\neq
0}X^{l}$, where
\[
\ \ X^{l}=\left\{  e^{i\alpha lx}\omega_{l}\left(  y\right)  ,\ \omega_{l}\in
L_{\frac{1}{K_{2}\left(  y\right)  }}^{2}\left(  y_{1},y_{2}\right)
.\right\}  .
\]
On the subspace $X^{l}$, the operator $JL$ is reduced to an ODE operator
$J_{l}L_{l}$ acting on the weighted space $L_{\frac{1}{K_{2}\left(  y\right)
}}^{2}\left(  y_{1},y_{2}\right)  ,$ where
\begin{equation}
J_{l}=U^{\prime\prime}\left(  y\right)  i\alpha l,\ \ \ \ L_{l}=\frac{1}%
{K_{2}\left(  y\right)  }-\left(  -\frac{d^{2}}{dy^{2}}+\alpha^{2}%
l^{2}\right)  ^{-1}. \label{defn-J-L-l}%
\end{equation}
We have a similar counting formula for unstable eigenvalues of $J_{l}L_{l}$.

\begin{proposition}
\label{prop-index-J-L-l}Consider $U\left(  y\right)  $ in class $\mathcal{K}%
^{+}\ $and $\alpha>\alpha_{\max}$. Let $k_{u,l}$ be the total algebraic
multiplicities of the unstable eigenvalues of the operator $J_{l}%
L_{l}\ \left(  0\neq l\in\mathbf{Z}\right)  \ $defined in (\ref{defn-J-L-l}).
Then
\begin{equation}
k_{u,l}=n^{-}\left(  L_{0}+l^{2}\alpha^{2}\right)  , \label{index-formula-l}%
\end{equation}
where $L_{0}$ is defined in (\ref{defn-L0}).
\end{proposition}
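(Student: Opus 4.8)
The plan is to run the argument of Proposition~\ref{prop-index-JL} one Fourier mode at a time. Since $X=\bigcup_{l\neq 0}X^{l}$ is an orthogonal decomposition invariant under $JL$, the unstable spectrum of $JL$, counted with algebraic multiplicity, is the disjoint union over $l\neq 0$ of the unstable spectra of the reduced ODE operators $J_{l}L_{l}$ acting on the weighted space $L^{2}_{1/K_{2}}(y_{1},y_{2})$, so it suffices to apply the instability index theory of \cite{lin-zeng-hamiltonian} to each pair $(J_{l},L_{l})$ separately. Here $J_{l}=i\alpha l\,U''(y)$ is anti-selfadjoint and $L_{l}=\frac{1}{K_{2}(y)}-(-\frac{d^{2}}{dy^{2}}+\alpha^{2}l^{2})^{-1}$ is selfadjoint on that space.

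First I would record the spectral data of $L_{l}$. The operator $L_{l}$ is bounded below, and by Lemma~11.3 of \cite{lin-zeng-hamiltonian} applied mode by mode --- equivalently, by the Fourier decomposition in $x$ underlying the identities $n^{-}(L)=2\sum_{k\geq 1}n^{-}(L_{0}+k^{2}\alpha^{2})$ and $n^{0}(L)=2\sum_{k\geq 1}n^{0}(L_{0}+k^{2}\alpha^{2})$ recalled before Proposition~\ref{prop-index-JL} --- one has $n^{-}(L_{l})=n^{-}(L_{0}+l^{2}\alpha^{2})$ and $n^{0}(L_{l})=n^{0}(L_{0}+l^{2}\alpha^{2})$, both finite since $L_{0}+l^{2}\alpha^{2}$ is a regular one-dimensional Schr\"odinger operator. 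Theorem~2.3 of \cite{lin-zeng-hamiltonian} then gives the index identity
\[
k_{r,l}+2k_{c,l}+2k_{i,l}^{\leq 0}+k_{0,l}^{\leq 0}=n^{-}(L_{l})=n^{-}(L_{0}+l^{2}\alpha^{2}),
\]
where $k_{r,l}$ and $k_{c,l}$ are the algebraic multiplicities of unstable eigenvalues of $J_{l}L_{l}$ on the positive real axis and in the open first quadrant, $k_{i,l}^{\leq 0}$ is the number of non-positive directions of $L_{l}$ on the generalized eigenspace of $J_{l}L_{l}$ over $i\mathbf{R}^{+}$, and $k_{0,l}^{\leq 0}$ the number of non-positive directions of $L_{l}$ on $E_{0,l}/\ker L_{l}$, with $E_{0,l}$ the generalized zero eigenspace. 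Since $k_{u,l}=k_{r,l}+2k_{c,l}$, it remains to show $k_{i,l}^{\leq 0}=0$ and $k_{0,l}^{\leq 0}=0$.

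For $k_{i,l}^{\leq 0}=0$ I would argue as in Lemmas~\ref{lemma-continuous-A-torus} and \ref{lemma-continuous-A1-stable}: a nonzero purely imaginary eigenvalue of $J_{l}L_{l}$ would yield an $H^{2}$ solution of the Rayleigh equation for the profile $U-U_{s}$ at a real phase speed $c$, and Lemma~\ref{lemma-inflection-value} forces $c$ to equal the unique inflection value, which in the chosen moving frame is $0$; hence the eigenvalue would have been $0$, a contradiction, so the generalized imaginary eigenspace over $i\mathbf{R}^{+}$ is trivial. For $k_{0,l}^{\leq 0}=0$ I would prove $E_{0,l}=\ker L_{l}$, i.e.\ that $J_{l}L_{l}$ has no Jordan chain at the zero eigenvalue: a generalized zero eigenvector would satisfy $J_{l}L_{l}\omega_{l}\in\ker L_{l}$ with $\omega_{l}\notin\ker L_{l}$, and unwinding this relation exactly as in the $\lambda=0$ cases of Lemmas~\ref{lemma-continuous-A-torus} and \ref{lemma-continuous-A1-stable} --- using Lemma~3.5 of \cite{lin-shear} that the relevant kernel eigenfunction does not vanish identically on the critical set $\{U=U_{s}\}$ --- forces the associated stream function to fail to belong to $H^{2}$ near the critical layer. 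This contradiction gives $E_{0,l}=\ker L_{l}$, hence $L_{l}$ is identically zero on $E_{0,l}/\ker L_{l}=\{0\}$ and $k_{0,l}^{\leq 0}=0$. Therefore $k_{u,l}=n^{-}(L_{l})=n^{-}(L_{0}+l^{2}\alpha^{2})$, as claimed.

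I expect the only step requiring genuine work to be the identity $E_{0,l}=\ker L_{l}$ (absence of a Jordan block at zero), since that is where one must analyze the stream function at the critical layer $\{U=U_{s}\}$ and invoke the non-degeneracy input of \cite{lin-shear}; the reduction to a single Fourier mode, the transfer of the $n^{-},n^{0}$ data to $L_{0}+l^{2}\alpha^{2}$, and the bookkeeping in the index formula are all direct transcriptions of the proof of Proposition~\ref{prop-index-JL}. When $\alpha$ is large enough that $L_{0}+l^{2}\alpha^{2}>0$ for every $l\neq 0$, both $\ker L_{l}$ and the imaginary eigenspaces are trivial and the last two steps are immediate.
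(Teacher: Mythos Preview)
Your overall strategy is sound, but there is a genuine gap at the very first step: you apply Theorem~2.3 of \cite{lin-zeng-hamiltonian} directly to the pair $(J_{l},L_{l})$ on $X^{l}$, and this is exactly what the paper says \emph{cannot} be done. The index theorem in \cite{lin-zeng-hamiltonian} is formulated for \emph{real} Hamiltonian systems, where $J$ and $L$ are real operators on a real Hilbert space; the quadruplet symmetry $\{\lambda,-\lambda,\bar\lambda,-\bar\lambda\}$ of the spectrum, which underlies the counting $k_{r}+2k_{c}$, depends on this. Here $J_{l}=i\alpha l\,U''(y)$ is not a real operator, so the spectrum of $J_{l}L_{l}$ need not be conjugation-symmetric, and the formula $k_{u,l}=k_{r,l}+2k_{c,l}$ you wrote down is not a priori meaningful.

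The paper's fix is to pass to the real subspace $Y_{l}=X^{l}\oplus X^{-l}$, which is isomorphic to $\bigl(L^{2}_{1/K_{2}}(y_{1},y_{2})\bigr)^{2}$ via the $\cos(\alpha l x)$, $\sin(\alpha l x)$ basis. On $Y_{l}$ the operator $JL$ becomes $J^{l}L^{l}$ with
\[
J^{l}=\begin{pmatrix}0 & \alpha l U''\\ -\alpha l U'' & 0\end{pmatrix},\qquad L^{l}=\begin{pmatrix}L_{l}&0\\0&L_{l}\end{pmatrix},
\]
which \emph{are} real operators, so Theorem~2.3 applies and yields $k_{u}^{l}=n^{-}(L^{l})=2n^{-}(L_{l})=2n^{-}(L_{0}+l^{2}\alpha^{2})$. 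Since the spectra of $J_{l}L_{l}$ and $J_{-l}L_{-l}$ are complex conjugates, $k_{u}^{l}=k_{u,l}+k_{u,-l}=2k_{u,l}$, and dividing by two gives the result. Your arguments that there are no nonzero imaginary eigenvalues and that $E_{0}=\ker L$ are the right ingredients---they are precisely what the paper invokes (``the same proof of Proposition~\ref{prop-index-JL}'')---but they need to be run on the real pair $(J^{l},L^{l})$ rather than on $(J_{l},L_{l})$.
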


\begin{proof}
Since $J_{l}$ is not a real operator, we can not directly apply the index
Theorem 2.3 in \cite{lin-zeng-hamiltonian}. Define the space
\begin{align*}
Y_{l}  &  =\ X^{l}\oplus X^{-l}\\
&  =\left\{  \cos\left(  \alpha lx\right)  \omega_{1}\left(  y\right)
+\sin\left(  \alpha lx\right)  \omega_{2}\left(  y\right)  ,\ \omega
_{1},\omega_{2}\in L_{\frac{1}{K_{2}\left(  y\right)  }}^{2}\left(
y_{1},y_{2}\right)  \right\}  ,
\end{align*}
which is isomorphic to the space $Y=\left(  L_{\frac{1}{K_{2}\left(  y\right)
}}^{2}\left(  y_{1},y_{2}\right)  \right)  ^{2}$. For any
\[
\omega=\cos\left(  \alpha lx\right)  \omega_{1}\left(  y\right)  +\sin\left(
\alpha lx\right)  \omega_{2}\left(  y\right)  \in Y_{l},
\]
we have
\[
JL\omega=\left(  \cos\left(  \alpha lx\right)  ,\sin\left(  \alpha lx\right)
\right)  J^{l}L^{l}\left(
\begin{array}
[c]{c}%
\omega_{1}\\
\omega_{2}%
\end{array}
\right)  ,
\]
where%
\[
J^{l}=\left(
\begin{array}
[c]{cc}%
0 & \alpha lU^{\prime\prime}\\
-\alpha lU^{\prime\prime} & 0
\end{array}
\right)  ,\ \ L^{l}=\left(
\begin{array}
[c]{cc}%
L_{l} & 0\\
0 & L_{l}%
\end{array}
\right)  .
\]
In the above, the operator $L_{l}$ is defined in (\ref{defn-J-L-l}). Thus to
study $JL$ on $Y_{l}$, it is equivalent to study $J^{l}L^{l}$ on $Y$. Let
$k_{u}^{l}\ $be the total algebraic multiplicities of the unstable eigenvalues
of the operator $J^{l}L^{l}$. Then by Theorem 2.3 in
\cite{lin-zeng-hamiltonian} and the same proof of Proposition
\ref{prop-index-J-L-l}, we have
\[
k_{u}^{l}\ =n^{-}\left(  L^{l}\right)  =2n^{-}\left(  L_{l}\right)
=2n^{-}\left(  L_{0}+l^{2}\alpha^{2}\right)  .
\]
Since the spectra of $J_{l}L_{l}$ is complex conjugate of that of
$J_{-l}L_{-l}$, so
\[
k_{u}^{l}=k_{u,l}+k_{u,-l}=2k_{u,l}%
\]
and this finishes the proof of the proposition.
\end{proof}

\begin{remark}
Let $\lambda=i\alpha lc$ be an eigenvalue of $J_{l}L_{l}$ and $J_{l}%
L_{l}\omega=\lambda\omega$ for some $0\neq\omega\in X^{l}$. Then the stream
function
\[
\psi\left(  y\right)  =\left(  -\frac{d^{2}}{dy^{2}}+\alpha^{2}l^{2}\right)
^{-1}\omega\left(  y\right)
\]
satisfies the classical Rayleigh equation
\begin{equation}
\left(  -\frac{d^{2}}{dy^{2}}+\alpha^{2}l^{2}+\frac{U^{\prime\prime}}%
{U-c}\right)  \psi=0 \label{rayleigh-howard}%
\end{equation}
with Dirichlet or periodic boundary conditions. It was shown in
\cite{howard-number} that for $U\left(  y\right)  \ $in class $\mathcal{K}%
^{+}$, the total number (i.e. geometric multiplicities) of unstable
eigenvalues (i.e. $\operatorname{Im}c>0$) can not exceed $n^{-}\left(
L_{0}+l^{2}\alpha^{2}\right)  $. In \cite{lin-shear}, it was shown that
$k_{u,l}\geq1$ when $n^{-}\left(  L_{0}+l^{2}\alpha^{2}\right)  \neq0$. The
precise index formula (\ref{index-formula-l}) not only gives an improvement
over previous results, but also is important below for understanding the
dynamics on the center space.
\end{remark}

Denote $E^{s}\left(  E^{u}\right)  \subset X$ to be the stable (unstable)
eigenspace of $JL$, then
\[
\dim E^{s}=\dim E^{u}=k_{u}=n^{-}\left(  L\right)  \text{. }%
\]
Moreover, by Corollary 6.1 in \cite{lin-zeng-hamiltonian}, $L|_{E^{s}\oplus
E^{u}}$ is non-degenerate and
\begin{equation}
n^{-}\left(  E^{s}\oplus E^{u}\right)  =\dim E^{u}=n^{-}\left(  L\right)  .
\label{negative-index-unstable}%
\end{equation}
Define the center space $E^{c}$ to be the orthogonal (in the inner product
$\left\langle L\cdot,\cdot\right\rangle $) complement of $E^{s}\oplus E^{u}$
in $X$, that is,
\begin{equation}
E^{c}=\left\{  \omega\in X\ |\ \left\langle L\omega,\omega_{1}\right\rangle
=0,\ \forall\omega_{1}\in E^{s}\oplus E^{u}\right\}  . \label{defn-E-c}%
\end{equation}
Then we have

\begin{lemma}
\label{lemma-positive-center}The decomposition $X=E^{s}\oplus E^{u}\oplus
E^{c}$ is invariant under $JL$. Moreover, $n^{-}\left(  L|_{E^{c}}\right)  =0$
and as a consequence $L|_{E^{c}/\ker L}>0$.
\end{lemma}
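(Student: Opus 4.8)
The plan is to extract everything from the $\left\langle L\cdot,\cdot\right\rangle$-orthogonality that is built into the definition (\ref{defn-E-c}) of $E^{c}$, combined with two inputs already recorded: that $E^{s}\oplus E^{u}$ is finite dimensional with $\left\langle L\cdot,\cdot\right\rangle$ non-degenerate on it, and the negative-index count (\ref{negative-index-unstable}). First I would set up the decomposition. Since $\dim\left(E^{s}\oplus E^{u}\right)=2n^{-}(L)=2n^{-}(A_{0})<\infty$ by Proposition \ref{prop-index-JL}, and $\left\langle L\cdot,\cdot\right\rangle$ is non-degenerate on this finite-dimensional space, the map $\omega_{1}\mapsto\left\langle L\omega_{1},\cdot\right\rangle|_{E^{s}\oplus E^{u}}$ is a linear isomorphism onto the dual of $E^{s}\oplus E^{u}$. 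Hence every $\omega\in X$ has a unique $\left\langle L\cdot,\cdot\right\rangle$-orthogonal projection onto $E^{s}\oplus E^{u}$, the remainder lying in $E^{c}$; non-degeneracy also forces $\left(E^{s}\oplus E^{u}\right)\cap E^{c}=\{0\}$. This gives the (closed) direct sum $X=E^{s}\oplus E^{u}\oplus E^{c}$.

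Next I would check invariance. Each of $E^{s},E^{u}$ is a sum of generalized eigenspaces of $JL$, hence $JL$-invariant, and so is $E^{s}\oplus E^{u}$. For $E^{c}$ it suffices to observe that for $\omega\in E^{c}$ and $\omega_{1}\in E^{s}\oplus E^{u}$, using that $L$ is self-adjoint and $J$ anti-selfadjoint,
\[
\left\langle L(JL\omega),\omega_{1}\right\rangle=\left\langle JL\omega,L\omega_{1}\right\rangle=-\left\langle L\omega,JL\omega_{1}\right\rangle=0,
\]
since $JL\omega_{1}\in E^{s}\oplus E^{u}$ and $\omega\in E^{c}$; thus $JL\omega\in E^{c}$.

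Then the index statement follows from additivity of the indices across an $\left\langle L\cdot,\cdot\right\rangle$-orthogonal splitting. Because $E^{c}$ is $\left\langle L\cdot,\cdot\right\rangle$-orthogonal to $E^{s}\oplus E^{u}$ by construction, writing $\omega=\omega_{1}+\omega_{2}$ with $\omega_{1}\in E^{s}\oplus E^{u}$, $\omega_{2}\in E^{c}$ gives $\left\langle L\omega,\omega\right\rangle=\left\langle L\omega_{1},\omega_{1}\right\rangle+\left\langle L\omega_{2},\omega_{2}\right\rangle$, so $n^{-}(L)=n^{-}(L|_{E^{s}\oplus E^{u}})+n^{-}(L|_{E^{c}})$ and likewise $n^{0}(L)=n^{0}(L|_{E^{s}\oplus E^{u}})+n^{0}(L|_{E^{c}})$. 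Since (\ref{negative-index-unstable}) gives $n^{-}(L|_{E^{s}\oplus E^{u}})=\dim E^{u}=n^{-}(L)$ while non-degeneracy gives $n^{0}(L|_{E^{s}\oplus E^{u}})=0$, I conclude $n^{-}(L|_{E^{c}})=0$, i.e. $L|_{E^{c}}\geq0$. To pass to the quotient I would note $\ker L\subset E^{c}$ (it is $\left\langle L\cdot,\cdot\right\rangle$-orthogonal to all of $X$), and conversely, if $\omega\in E^{c}$ lies in the radical of $L|_{E^{c}}$ then $\left\langle L\omega,\omega\right\rangle=0$, so $L^{1/2}\omega=0$ and hence $\omega\in\ker L$; therefore $L|_{E^{c}/\ker L}$ is a non-degenerate non-negative form, that is, $L|_{E^{c}/\ker L}>0$.

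I do not expect a deep obstacle: with the Hamiltonian index theory of \cite{lin-zeng-hamiltonian} available this lemma is essentially bookkeeping. The step carrying the actual content is the additivity $n^{-}(L)=n^{-}(L|_{E^{s}\oplus E^{u}})+n^{-}(L|_{E^{c}})$ across the $\left\langle L\cdot,\cdot\right\rangle$-orthogonal decomposition — this is precisely what forces $n^{-}(L|_{E^{c}})=0$ once (\ref{negative-index-unstable}) is used — together with the routine but necessary verification that the splitting is a genuine topological direct sum and that $E^{c}$ is invariant under the (unbounded) operator $JL$; all of these rest on the finite-dimensionality and non-degeneracy of $L$ on $E^{s}\oplus E^{u}$ supplied by Proposition \ref{prop-index-JL} and (\ref{negative-index-unstable}).
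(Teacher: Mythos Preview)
Your proposal is correct and follows essentially the same approach as the paper: invariance is deduced from the invariance of $\langle L\cdot,\cdot\rangle$ under $JL$ (you write out the computation $\langle L(JL\omega),\omega_1\rangle=-\langle L\omega,JL\omega_1\rangle=0$ that the paper leaves implicit), and the index statement is obtained by additivity across the $\langle L\cdot,\cdot\rangle$-orthogonal splitting together with (\ref{negative-index-unstable}). Your additional care in verifying that $X=E^{s}\oplus E^{u}\oplus E^{c}$ is a genuine direct sum and in passing to the quotient $E^{c}/\ker L$ simply fills in details the paper's two-line proof omits.
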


\begin{proof}
The invariance of the decomposition follows from the invariance of
$\left\langle L\cdot,\cdot\right\rangle $ under $JL$. By
(\ref{negative-index-unstable}), we have
\[
n^{-}\left(  L|_{E^{c}}\right)  =n^{-}\left(  L\right)  -n^{-}\left(
E^{s}\oplus E^{u}\right)  =0,
\]
and thus $L|_{E^{c}/\ker L}>0$.
\end{proof}

Since $E^{c}$ is invariant under $JL$, we can restrict the linearized Euler
equation on $E^{c}$. The linear inviscid damping still holds true for initial
data in $E^{c}$. Denote $P_{1}$ to be the projection of $X$ to $\ker L$. By
the same proof of Theorem \ref{thm-rage-general-shear} and Corollary
\ref{cor-decay-general-shear}, we have the following.

\begin{theorem}
\label{thm-damping-center}Suppose $U\left(  y\right)  $ is in class
$\mathcal{K}^{+}$ and $\alpha<\alpha_{\max}$.

i) If $\ker L=\left\{  0\right\}  $, then
\[
\frac{1}{T}\int_{0}^{T}\left\Vert u\left(  t\right)  \right\Vert _{L^{2}}%
^{2}dt\rightarrow0,\ \text{when }T\rightarrow\infty,
\]
for any solution $\omega\left(  t\right)  \ $of
(\ref{eqn-linearized-euler-shear}) with $\omega\left(  0\right)  \in E^{c}$.
Here, $E^{c}$ is the center space defined in (\ref{defn-E-c}).

ii) If $\ker L\neq\left\{  0\right\}  $, then
\[
\frac{1}{T}\int_{0}^{T}\left\Vert u_{1}\left(  t\right)  \right\Vert _{L^{2}%
}^{2}dt\rightarrow0,\ \text{when }T\rightarrow\infty,
\]
where $u_{1}\left(  t\right)  $ is the velocity corresponding to the vorticity
$\left(  I-P_{1}\right)  \omega\left(  t\right)  $ with $\omega\left(
0\right)  \in E^{c}$.
\end{theorem}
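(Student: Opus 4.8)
The plan is to repeat the proof of Theorem \ref{thm-rage-general-shear} and Corollary \ref{cor-decay-general-shear}, carried out now on the invariant subspace $E^{c}$ (respectively on $(I-P_{1})E^{c}$) instead of all of $X$. First I would record that, by Lemma \ref{lemma-positive-center}, $X=E^{s}\oplus E^{u}\oplus E^{c}$ is $JL$-invariant, so the linearized Euler semigroup $e^{tJL}$ restricts to $E^{c}$; moreover $\ker(JL)=E_{0}=\ker L\subset E^{c}$, as shown in the proof of Proposition \ref{prop-index-JL}. In case i), where $\ker L=\{0\}$, Lemma \ref{lemma-positive-center} gives $L|_{E^{c}}>0$, so $[\cdot,\cdot]=\langle L\cdot,\cdot\rangle$ is an inner product on $E^{c}$ equivalent to the ($K_{2}$-weighted) $L^{2}$ one, and, exactly as in Lemma \ref{lemma-spectra-rectangular} i), $JL$ is anti-selfadjoint on $(E^{c},[\cdot,\cdot])$, hence has $E^{c}$-spectrum on $i\mathbf{R}$. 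To invoke the RAGE theorem I need this spectrum to be purely continuous, i.e.\ that $JL|_{E^{c}}$ has no eigenvalues: there are no nonzero purely imaginary eigenvalues of $JL$ (the Rayleigh-equation computation of Lemmas \ref{lemma-continuous-A-torus} and \ref{lemma-continuous-A1-stable}, via Lemma \ref{lemma-inflection-value} and the uniqueness of the inflection value $U_{s}$, as already used in the proof of Proposition \ref{prop-index-JL}), and $0$ is not an eigenvalue on $E^{c}$ since $\ker(JL)=\ker L=\{0\}$. Then I would apply the RAGE theorem to the unitary group $e^{tJL}$ on $(E^{c},[\cdot,\cdot])$ with $B\omega=\nabla^{\perp}(-\Delta)^{-1}\omega=u$ (compact, since it gains a derivative on the bounded domain), which gives $\frac{1}{T}\int_{0}^{T}\|u(t)\|_{L^{2}}^{2}\,dt\to0$ exactly as in Corollary \ref{cor-decay-general-shear} i); as there, one reduces to finite-rank approximations of $B$ and passes to the limit using the uniform bound $\|e^{tJL}\omega\|\le C\|\omega\|$ valid on $E^{c}$.

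For case ii), where $\ker L\neq\{0\}$, I would let $P_{1}$ be the ($L^{2}$-orthogonal) projection of $X$ onto $\ker L$ and set $X_{1}^{c}=(I-P_{1})E^{c}$. Since $\ker L$ is $JL$-invariant and $LP_{1}=P_{1}L=0$, for $\omega(0)\in E^{c}$ the function $\omega_{1}(t)=(I-P_{1})e^{tJL}\omega(0)$ solves $\partial_{t}\omega_{1}=(I-P_{1})JL\,\omega_{1}=:A_{1}\omega_{1}$, with $A_{1}$ mapping $X_{1}^{c}$ into itself. On $X_{1}^{c}\subset E^{c}$ one has $X_{1}^{c}\cap\ker L=\{0\}$, so Lemma \ref{lemma-positive-center} gives $L|_{X_{1}^{c}}>0$, and the computation of Lemma \ref{lemma-continuous-A-torus} (using $L(I-P_{1})=L$) shows $A_{1}$ is anti-selfadjoint on $(X_{1}^{c},[\cdot,\cdot])$. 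The remaining point is that $A_{1}$ has no eigenvalues: if $A_{1}\omega=\lambda\omega$ with $\lambda\in i\mathbf{R}$, $0\neq\omega\in X_{1}^{c}$, then $JL\omega-\lambda\omega=\tilde{\omega}\in\ker L$; for $\lambda\neq0$ the vector $\omega+\lambda^{-1}\tilde{\omega}$ would be a nonzero purely imaginary eigenvector of $JL$, which is impossible, while for $\lambda=0$ I would argue as in Lemma \ref{lemma-continuous-A1-stable}, using Lemma 3.5 of \cite{lin-shear} (non-vanishing of the relevant $\ker L$ eigenfunctions on the inflection set $\{U=U_{s}\}$) to show the corresponding stream function cannot lie in $H^{2}$. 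Then the RAGE theorem applied to $e^{tA_{1}}$ with $B$ the vorticity-to-velocity map gives $\frac{1}{T}\int_{0}^{T}\|u_{1}(t)\|_{L^{2}}^{2}\,dt\to0$, where $u_{1}$ is the velocity associated to $\omega_{1}(t)=(I-P_{1})\omega(t)$.

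The hard part is the absence of embedded eigenvalues of the restricted operators $JL|_{E^{c}}$ and $A_{1}|_{X_{1}^{c}}$; once this is granted everything else is a verbatim transcription of the proofs of Theorem \ref{thm-rage-general-shear} and Corollary \ref{cor-decay-general-shear} with $X$ replaced by $E^{c}$ (respectively $X_{1}^{c}$). Nonzero imaginary eigenvalues are excluded by the Rayleigh-equation analysis of $JL$ already in place; the only genuinely new case is $\lambda=0$ in part ii), i.e.\ ruling out $JL\omega\in\ker L\setminus\{0\}$ with $\omega\in X_{1}^{c}$, which is precisely where the non-degeneracy input of \cite{lin-shear} (via Lemma \ref{lemma-continuous-A1-stable}) is needed.
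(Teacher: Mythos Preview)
Your proposal is correct and takes essentially the same approach as the paper, which simply states that Theorem \ref{thm-damping-center} follows ``by the same proof of Theorem \ref{thm-rage-general-shear} and Corollary \ref{cor-decay-general-shear}'' once one restricts to the invariant subspace $E^{c}$ and uses $L|_{E^{c}/\ker L}>0$ from Lemma \ref{lemma-positive-center}. You have correctly filled in the details the paper omits, including the exclusion of embedded eigenvalues via Lemma \ref{lemma-inflection-value} and the $\lambda=0$ case via Lemma \ref{lemma-continuous-A1-stable}.
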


\begin{remark}
Above theorem suggests that the dynamics of solutions of the linearized Euler
equation on the center space $E^{c}$ is similar to the stable case.

The invariant decomposition $X=E^{s}\oplus E^{u}\oplus E^{c}$ can be used to
prove the exponential trichotomy of the semigroup $e^{tJL}$. We refer to
Theorem 2.2 in \cite{lin-zeng-hamiltonian} for the precise statement. The next
natural step is to construct invariant manifolds (stable, unstable and center)
for the nonlinear Euler equation, which will give a complete description of
the local dynamics near $u_{0}=\left(  U\left(  y\right)  ,0\right)  $. The
stable and unstable manifolds near any unstable shear flow were constructed in
\cite{lin-zeng-Euler-mfld}. The construction of center manifold is under
investigation. Once constructed, on such center manifold, the positivity of
$L|_{E^{c}}$ (Lemma \ref{lemma-positive-center}) could be used to prove
nonlinear stability of solutions.
\end{remark}

\section{Appendix}

\begin{lemma}
\label{lemma-inflection-value}Let $U\left(  y\right)  \in C^{2}\left(
y_{1},y_{2}\right)  $, where $-\infty<y_{1}<y_{2}<\infty$. Consider the
Rayleigh equation
\begin{equation}
\left(  -\frac{d^{2}}{dy^{2}}+\alpha^{2}+\frac{U^{\prime\prime}}{U-c}\right)
\psi=0, \label{eqn-Rayleigh}%
\end{equation}
with the periodic boundary condition%
\[
\psi\left(  y_{1}\right)  =\psi\left(  y_{2}\right)  ,\ \psi^{\prime}\left(
y_{1}\right)  =\psi^{\prime}\left(  y_{2}\right)  ,
\]
or the Dirichlet boundary condition
\[
\psi\left(  y_{1}\right)  =\psi\left(  y_{2}\right)  =0.
\]
If (\ref{eqn-Rayleigh}) has a neutral solution with $\alpha>0,\ c\in
\mathbf{R}$ and $\psi\in H^{2}\left(  y_{1},y_{2}\right)  $, then $c$ must be
an inflection value of $U$.
\end{lemma}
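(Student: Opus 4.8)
The plan is to argue by contradiction: assume $c$ is \emph{not} an inflection value of $U$, so that $U''(y_0)\neq 0$ at every point of the set $Z=\{y\in[y_1,y_2]:U(y)=c\}$, and deduce that the only $H^2$ solution of (\ref{eqn-Rayleigh}) satisfying the stated boundary conditions is $\psi\equiv 0$. Note first that $Z$ is finite: if $U'(y_0)\neq 0$ then $U$ is locally strictly monotone near $y_0$, and if $U'(y_0)=0$ then $U''(y_0)\neq 0$ makes $y_0$ a strict local extremum of $U$; in either case $y_0$ is isolated in the compact interval $[y_1,y_2]$. Since the coefficients of (\ref{eqn-Rayleigh}) are real we may take $\psi$ real. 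The algebraic starting point is the identity
$$\frac{U''}{U-c}=\left(\frac{U'}{U-c}\right)'+\left(\frac{U'}{U-c}\right)^{2},$$
valid on $[y_1,y_2]\setminus Z$, so that with $w:=U'/(U-c)$ the Rayleigh equation reads $\psi''=(\alpha^{2}+w'+w^{2})\psi$ away from $Z$.

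The next step is the key local observation that $\psi$ must vanish on $Z$. Since $\psi\in H^{2}$, the function $\tfrac{U''}{U-c}\psi=\psi''-\alpha^{2}\psi$ lies in $L^{2}$. Near any $y_0\in Z$ the function $U-c$ has a simple zero (when $U'(y_0)\neq 0$) or a double zero (when $U'(y_0)=0$, using $U''(y_0)\neq 0$), so $\tfrac{U''}{U-c}$ blows up at least like $(y-y_0)^{-1}$; square integrability of $\tfrac{U''}{U-c}\psi$ together with $\psi\in C^{1}$ then forces $\psi(y_0)=0$. With this, near each $y_0$ one has the local bounds $w\psi=O(1)$, $w\psi^{2}=o(1)$, and $\int w'\psi^{2}$ convergent, which is all the quantitative control needed below.

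The final step is an energy identity. Multiply the rewritten Rayleigh equation by $\psi$, integrate over $I_\varepsilon:=[y_1,y_2]\setminus N_\varepsilon(Z)$ (an $\varepsilon$-neighbourhood of $Z$ removed), integrate by parts twice and complete the square to get
$$0=\int_{I_\varepsilon}\Bigl((\psi'-w\psi)^{2}+\alpha^{2}\psi^{2}\Bigr)\,dy-\bigl[\psi'\psi\bigr]_{\partial I_\varepsilon}+\bigl[w\psi^{2}\bigr]_{\partial I_\varepsilon}.$$
At $y_1,y_2$ the boundary contributions vanish by the periodic or Dirichlet condition (in both cases $\psi'\psi$ and $w\psi^{2}$ either vanish or cancel in the difference, $w$ being periodic when $U$ is). At the endpoints of the removed neighbourhoods the contributions tend to $0$ as $\varepsilon\to 0$ because $\psi$ vanishes on $Z$ and $w\psi^{2}=o(1)$ there. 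Letting $\varepsilon\to 0$ — the integrand being globally $L^{1}$ since $\psi'-w\psi\in L^{2}$ — yields $\int_{y_1}^{y_2}\bigl((\psi'-w\psi)^{2}+\alpha^{2}\psi^{2}\bigr)\,dy=0$, and since $\alpha>0$ this gives $\psi\equiv 0$, the desired contradiction. (When $c\notin U([y_1,y_2])$, i.e. $Z=\varnothing$, $w$ is smooth and bounded and the same computation applies with no limiting argument at all.)

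I expect the main obstacle to be exactly the analysis near the singular set $Z$: verifying that $\psi(y_0)=0$ from the $L^{2}$ constraint, that $w\psi$ stays bounded and $\int w'\psi^{2}$ converges, and that every boundary term produced by removing $N_\varepsilon(Z)$ genuinely vanishes in the limit. The simple-zero case is routine; the only genuinely delicate point is a possible double zero of $U-c$ (which does occur, e.g. at $c=\pm1$ for $U=\sin y$), where $w\sim 2/(y-y_0)$ and one must use $\psi(y_0)=0$ carefully to control $w\psi$, $w\psi^{2}$ and $w'\psi^{2}$ simultaneously.
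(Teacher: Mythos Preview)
Your argument is correct, and it takes a genuinely different route from the paper's. The paper proceeds in two stages: first it shows $c$ must lie in the range of $U$ (in the periodic case via an integral identity obtained from the positive ground state of the operator $-d^{2}/dy^{2}+U''/(U-c)$), and then it shows $\psi$ cannot vanish at \emph{every} zero of $U-c$ by invoking Lemma~3.5 of \cite{lin-shear}, a Sturm-type oscillation result saying that on any subinterval where $U-c$ keeps one sign and $\psi$ vanishes at both endpoints, $\psi\equiv 0$. You instead use the algebraic factorization $U''/(U-c)=w'+w^{2}$ with $w=U'/(U-c)$, complete the square, and arrive at the single energy identity $\int\bigl((\psi'-w\psi)^{2}+\alpha^{2}\psi^{2}\bigr)\,dy=0$, which handles $c$ in or out of the range of $U$ uniformly. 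Both proofs share the local step that the $H^{2}$ hypothesis forces $\psi$ to vanish on $Z$; after that, your approach is self-contained and algebraically cleaner, while the paper's is more modular but leans on an external oscillation lemma and a separate spectral argument for the periodic out-of-range case. Two small remarks: in the periodic case you (like the paper, tacitly) need $U$ itself periodic so that the boundary term $[w\psi^{2}]_{y_{1}}^{y_{2}}$ cancels; and in the double-zero case your stated bounds $w\psi=O(1)$, $w\psi^{2}=o(1)$, $w'\psi^{2}=O(1)$ do all hold using only $\psi(y_{0})=0$ and $\psi\in C^{1}$, so the point you flagged as delicate indeed goes through.
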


\begin{proof}
First, we show that $c$ must be in the range of $U\left(  y\right)  $. Suppose
otherwise $c>\max U$ or $c<\min U$. Assume $c>\max U$. For the Dirichlet
boundary condition, since $U-c<0$ in $\left[  y_{1},y_{2}\right]  $, by the
proof of Lemma 3.5 of \cite{lin-shear}, $\psi\equiv0\ $in $\left[  y_{1}%
,y_{2}\right]  $, which is a contradiction. For the periodic boundary
condition, (\ref{eqn-Rayleigh}) implies that the operator $L_{0}=-\frac{d^{2}%
}{dy^{2}}-\frac{U^{\prime\prime}}{U-c}$ has a negative eigenvalue $-\alpha
^{2}$. Let $\lambda_{0}\leq-\alpha^{2}<0$ be the smallest eigenvalue of
$L_{0}$, then the corresponding eigenfunction $\phi$ can be taken to be
positive. The equation $L_{0}\phi=\lambda_{0}\phi$\ can be written as
\[
\left(  \left(  U-c\right)  \phi^{\prime}-U^{\prime}\phi\right)  ^{\prime
}=-\lambda_{0}\left(  U-c\right)  \phi.
\]
Integrating above from $y_{1}$ to $y_{2}$ and using the periodic boundary
condition, we have
\[
\int_{y_{1}}^{y_{2}}\left(  U-c\right)  \phi dy=0,
\]
which is a contradiction again. Therefore $c$ must be in the range of $U$.

Let $z_{1}<z_{2}<\cdots<z_{k_{c}}$ $\left(  k_{c}\geq1\right)  \ $be the zeros
of $U\left(  y\right)  -c$ in $\left[  y_{1},y_{2}\right]  $. We claim that
there exists $1\leq k\leq k_{c}$ such that $\psi\left(  z_{k}\right)  \neq0$.
For the Dirichlet boundary condition, this follows by Lemma 3.5 of
\cite{lin-shear}. For the periodic boundary condition, suppose otherwise
$\psi\left(  z_{k}\right)  =0$ for all $k=1,2,\cdots,k_{c}$. Let $z_{k_{c}%
+1}=z_{1}+y_{2}-y_{1}$ which is the translation of $z_{1}$ by one period. Then
$U-c$ takes the same sign on each interval $\left[  z_{i},z_{i+1}\right]  $,
$i=1,2,\cdots,k_{c}$, and $\psi=0$ at the end points. By the proof of Lemma
3.5 of \cite{lin-shear}, it follows that $\psi\equiv0$ in all the intervals
$\left[  z_{i},z_{i+1}\right]  $. Thus $\psi\equiv0\ $in $\left[  y_{1}%
,y_{2}\right]  $, a contradiction. Let $1\leq k_{0}\leq k_{c}$ be such that
$\psi\left(  z_{k_{0}}\right)  \neq0$. Then we must have $U^{\prime\prime
}\left(  z_{k_{0}}\right)  =0$, that is, $c=U\left(  z_{k_{0}}\right)  $ is an
inflection value. Suppose otherwise, then $U^{\prime\prime}\left(  z_{k_{0}%
}\right)  \neq0$ and thus $\frac{U^{\prime\prime}}{U-c}\psi$ is not in
$L_{loc}^{2}$ near $z_{k_{0}}$, which is in contradiction to the Rayleigh
equation (\ref{eqn-Rayleigh}) and the assumption that $\psi\in H^{2}$. This
finishes the proof of the Lemma.
\end{proof}

\begin{remark}
The above lemma shows that for general shear flows $U\left(  y\right)  $, any
$H^{2}\ $neutral mode must have its phase speed $c$ to be one of the
inflection values. This fact is used in section \ref{section-linear damping}
to exclude embedded eigenvalues and obtain the instability index formula
(\ref{index-formula-l}) and the positivity of $L|_{E^{c}}$. In
\cite{lin-shear} \cite{lin-shear-note}, it was shown for a class of shear
flows (called class $\mathcal{F}$ in \cite{lin-shear}) that any neutral
limiting mode (i.e. the limit of a sequence of unstable modes) must be in
$H^{2}$ and therefore the phase speed must be inflection values. These neutral
limiting modes are important for finding linear stability/instability criteria
since they give the transition points for stability/instability.

The flows $U\left(  y\right)  \ $in class $\mathcal{F}$ (see \cite{lin-shear}
for definition) include any monotone flow, class $\mathcal{K}^{+}$ flows, and
more generally, any $U\left(  y\right)  $ satisfying an ODE $U^{\prime\prime
}=k\left(  y\right)  g(U)$ for some $k>0$ and any $g$. However, for shear
flows not in class $\mathcal{F}$, the limiting neutral modes might be singular
(i.e. not in $H^{2}$). Such singular neutral modes might have their phase
speeds $c$ other than the inflection values.
\end{remark}

\begin{center}
{\Large Acknowledgement}
\end{center}

Zhiwu Lin is supported in part by NSF grants DMS-1411803 and DMS-1715201.

\end{document}